\newtheorem{thm}{Theorem}
\newtheorem{cor}[thm]{Corollary}
\newtheorem{prop}[thm]{Proposition}
\newtheorem{lem}[thm]{Lemma}
\newtheorem{remark}[thm]{Remark}
\newtheorem{conj}[thm]{Conjecture}
\newtheorem{ex}[thm]{Example}
\numberwithin{thm}{section}
\newcommand{\R}{\mathbf{R}}
\newcommand{\Z}{\mathbf{Z}}
\newcommand{\N}{\mathbf{N}}
\newcommand{\Ss}{\mathbf{S}}
\newcommand{\simd}{\mathrel{\rotatebox[origin=c]{-90}{$\sim$}}}
\begin{document}
\title{Minimal Penner dilatations on nonorientable~surfaces}
\author{Livio Liechti}
\thanks{The first author was supported by the Swiss National Science Foundation~(\# 175260)}
\address{Department of Mathematics, University of Fribourg, Ch.\ du Mus\'ee, 1700 Fribourg, Switzerland}
\email{livio.liechti@unifr.ch}
\author{Bal\'azs Strenner}
\address{Georgia Institute of Technology, School of Mathematics, Atlanta GA
  30332, USA}
\email{strenner@math.gatech.edu}

\begin{abstract}
For any nonorientable closed surface, we determine the minimal dilatation among pseudo-Anosov mapping classes arising from Penner's construction.
We deduce that the sequence of minimal Penner dilatations has exactly two
accumulation points, in contrast to the case of orientable surfaces where there
is only one accumulation point.
One of our key techniques is representing pseudo-Anosov dilatations as roots of
Alexander polynomials of fibred links and comparing dilatations using
the skein relation for Alexander polynomials.
\end{abstract}
\maketitle

\section{Introduction}
Thurston's classification states that elements of the mapping class group of a
finite type surface come in three types: reducible, periodic and pseudo-Anosov~\cite{Th}.
Associated with each pseudo-Anosov mapping class is a number~$\lambda$, the \emph{dilatation} or \emph{stretch factor}, which has several characterisations and is an algebraic integer~\cite{Th}.
In this article, we study the minimal dilatation among pseudo-Anosov mapping classes arising from a construction by products of Dehn twists along suitable simple closed curves, due to Penner~\cite{Pe}.
Let~$N_g$ be the nonorientable closed surface of genus~$g$, that is, the connected sum of~$g$ copies of the real projective plane~$\mathbf{RP}^2$.

\begin{thm}
\label{dilatationlimits}
For the minimal dilatation~$\delta_P(N_g)$ among pseudo-Anosov homeomorphisms arising from Penner's construction on the nonorientable closed surface of genus~$g$,
the limits $\lim_{k\to\infty}\delta_P(N_{2k})$ and $\lim_{k\to\infty}\delta_P(N_{2k+1})$ exist, and
\begin{enumerate}
\item[(a)] $\lim_{k\to\infty}\delta_P(N_{2k}) = 3+2\sqrt{2}$,
\item[(b)] $\lim_{k\to\infty}\delta_P(N_{2k+1})>3+2\sqrt{2}$.
\end{enumerate}
\end{thm}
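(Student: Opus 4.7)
The plan is to split the statement into three tasks: (i) an upper bound $\limsup_{k\to\infty}\delta_P(N_{2k}) \le 3+2\sqrt{2}$, (ii) a matching lower bound $\liminf_{k\to\infty}\delta_P(N_{2k}) \ge 3+2\sqrt{2}$, and (iii) a strict lower bound $\liminf_{k\to\infty}\delta_P(N_{2k+1}) > 3+2\sqrt{2}$. Existence of the two limits should then follow from a stabilisation argument: a Penner pseudo-Anosov on $N_g$ extends to one on $N_{g+2}$ whose dilatation is controlled by that on $N_g$ (for instance, by gluing in a handle that carries the identity on a local filling pair), so the subsequences $\delta_P(N_{2k})$ and $\delta_P(N_{2k+1})$ are eventually non-increasing and bounded below.

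For task~(i), I would exhibit an explicit sequence of Penner pseudo-Anosovs on $N_{2k}$ whose dilatations tend to $3+2\sqrt{2}$. The numerology $3+2\sqrt{2}=(1+\sqrt{2})^2$ is suggestive: the natural candidate is a filling pair of two-sided multicurves on $N_{2k}$ assembled from $k$ glued copies of a short fundamental block whose transfer matrix has trace $6$ and determinant $1$, for example two two-sided curves with geometric intersection two. The Penner incidence matrix of the resulting product of Dehn twists is then block tridiagonal and its Perron eigenvalue is pinned by the transfer eigenvalue, with a boundary correction vanishing as $k\to\infty$.

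For tasks~(ii) and~(iii) I would follow the approach advertised in the abstract: express the dilatation of any Penner pseudo-Anosov as the largest real root of the Alexander polynomial of the fibred link obtained by plumbing positive and negative Hopf bands along the chosen multicurves. The Alexander skein relation then gives a way to compare two such dilatations through a single crossing change, and by iterating skein reductions any Penner configuration can be simplified to one of a short list of minimal models. Monotonicity of the largest real root under a single skein move then bounds the dilatation of the original configuration from below by the smallest model dilatation, and in even genus this family of models yields exactly the limit $3+2\sqrt{2}$.

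For the odd/even dichotomy required in~(iii), the expected mechanism is a parity obstruction on the orientation double cover: a Penner configuration on $N_{2k+1}$ lifts to a symmetric configuration on an orientable double cover whose minimal skein reduction is strictly larger than in the even case, producing a quantitative gap. The main technical obstacle I foresee is making this gap uniform in $k$ and pinning down the correct minimal models in both parities; once that is done, task~(i) reduces to a linear algebra computation and tasks~(ii)--(iii) reduce to a finite check of Alexander polynomials of the minimal models against the target $x^2-6x+1$.
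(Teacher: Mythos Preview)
Your proposed stabilisation argument for the existence of the limits has a genuine gap: extending a Penner pseudo-Anosov from~$N_g$ to~$N_{g+2}$ by ``gluing in a handle that carries the identity'' does not yield a pseudo-Anosov map---the result is reducible, since the boundary of the added handle is invariant. Penner's construction requires that the curves \emph{fill} the surface, so you cannot simply carry a filling configuration on~$N_g$ to one on~$N_{g+2}$ without adding curves, and adding curves will in general change the dilatation in an uncontrolled direction. The paper handles existence quite differently: it first \emph{identifies the exact minimiser} on each~$N_g$ (an odd cycle of length~$g-1$ for even~$g$, an enriched odd cycle of length~$g-2$ for odd~$g$), and then proves directly that these specific minimisers form a monotone sequence in~$g$ (Lemmas~\ref{fromabove} and~\ref{pandecreasing}).

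Your strategy for the lower bounds also diverges from the paper and looks hard to complete as stated. The paper does not reduce an arbitrary Penner configuration to ``minimal models'' via skein moves; instead it uses a structural constraint---the intersection graph on a nonorientable surface must contain an odd cycle (Lemma~\ref{notbipartite}), and an induced odd cycle of length~$l$ forces genus at least~$l+1$ (Lemma~\ref{genusbound})---combined with Perron--Frobenius monotonicity under passing to subgraphs (Lemma~\ref{treebound}). The parity gap in~(b) then comes from the topological fact that an odd cycle alone \emph{cannot} fill a surface of odd genus (Corollary~\ref{fillingcyclecor}), so the intersection graph must contain an extra vertex, and the resulting enriched tree subgraph already forces dilatation uniformly above~$3+2\sqrt{2}$. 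The skein and Alexander polynomial machinery you emphasise is used in the paper, but only for a narrower purpose: comparing the dilatations of enriched cycles with different flow differences (Lemma~\ref{panminimum}), which feeds into identifying the odd-genus minimiser rather than into the lower bound~(b) itself.
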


We prove Theorem~\ref{dilatationlimits} by finding, for each nonorientable closed surface, a pseudo-Anosov mapping class
which has minimal dilatation among all pseudo-Anosov mapping classes arising from Penner's construction on this surface.
When the genus is even, we give a concrete description of the minimal dilatations~$\delta_P(N_{2k})$.

\begin{thm}
\label{evengenusexact}
For~$k\ge2$, the minimal dilatation~$\delta_P(N_{2k})$ equals the largest real
solution of the equation~$t-t^{\frac{k}{2k-1}}-t^{\frac{k-1}{2k-1}}-1=0.$
Alternatively, $\delta_P(N_{2k})$ equals the $2k-1$st power of the largest real root
of the integral polynomial~$x^{2k-1}-x^{k}-x^{k-1}-1$.
\end{thm}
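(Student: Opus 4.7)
My plan is to bracket $\delta_P(N_{2k})$ between matching upper and lower bounds. The upper bound will come from exhibiting a single pseudo-Anosov arising from Penner's construction on $N_{2k}$ whose dilatation is exactly the largest real solution of the equation in the theorem. The lower bound will require ruling out any Penner pseudo-Anosov on $N_{2k}$ with strictly smaller dilatation. The equivalence of the two formulations is then a purely algebraic check: the substitution $x = t^{1/(2k-1)}$ in the integral polynomial $x^{2k-1} - x^{k} - x^{k-1} - 1$ produces, after clearing the common factor $t^{(k-1)/(2k-1)}$, the displayed equation in~$t$.

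For the upper bound, I would look for an economical filling configuration on $N_{2k}$ made of two-sided simple closed curves, along which Penner's recipe (positive twists along one multicurve, negative along the other, composed so every curve is used at least once) produces a pseudo-Anosov. A natural candidate is a chain-type configuration respecting the symmetry of the orientation double cover $\Sigma_{2k-1}$ of $N_{2k}$. The dilatation is then the leading eigenvalue of a sparse Perron--Frobenius matrix coming from the action on an invariant train track; computing its characteristic polynomial should yield a factor of $x^{2k-1} - x^{k} - x^{k-1} - 1$, so that the leading eigenvalue raised to the power $2k-1$ satisfies the equation in~$t$.

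For the lower bound I would use the technique advertised in the abstract. Every Penner pseudo-Anosov on $N_{2k}$ should, via the orientation double cover together with a mapping-torus-type construction, be realised as the monodromy of a fibred link in a three-manifold, so that its dilatation is a root of the corresponding Alexander polynomial. Modifying the input Penner data -- for example by adding or rearranging curves, or by lengthening the twist word -- corresponds to local crossing changes on the fibred link. The Alexander skein relation then controls how the largest real root of the Alexander polynomial transforms under such moves, and the goal is to show that each allowed move pushes this root weakly upward, so that the minimum is attained precisely at the configuration giving $x^{2k-1} - x^{k} - x^{k-1} - 1$.

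The main obstacle will be the lower bound. One has to organise the combinatorial set of admissible Penner configurations on $N_{2k}$ so that every configuration is related to the minimising one by a finite sequence of crossing changes along which the largest Alexander-polynomial root moves monotonically, and one needs a clean monotonicity statement for the skein relation in this context. An additional subtlety of the nonorientable setting is that Penner's construction requires two-sided curves and, once lifted to the orientation double cover, interacts with the deck involution; the skein comparison must be carried out $\Z/2$-equivariantly so that the resulting inequality descends back to~$N_{2k}$. Once these two points are handled, the upper and lower bounds coincide and Theorem~\ref{evengenusexact} follows.
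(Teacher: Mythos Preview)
Your upper-bound idea is essentially what the paper does: the minimiser on $N_{2k}$ is the twist-and-click example on an odd cycle of length $l=2k-1$ (Example~\ref{cycle_example}, Example~\ref{twistandclick}), and its dilatation is computed via a companion matrix to be exactly the $(2k-1)$st power of the largest root of $x^{2k-1}-x^k-x^{k-1}-1$ (Lemma~\ref{companion}, Proposition~\ref{specialisation}).

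Your lower-bound plan, however, misreads where the skein-relation technique is applied. In the paper, fibred links and the Alexander skein relation are used \emph{only} for the odd-genus case (Section~\ref{enriched_section}), to compare different flow differences on an enriched cycle. For even genus the lower bound is obtained by a purely combinatorial/Perron--Frobenius argument (Theorem~\ref{evengenus}), and the two structural facts you are missing are: (i) the intersection graph of any Penner configuration filling a nonorientable surface is not bipartite, hence contains an odd cycle (Lemma~\ref{notbipartite}); and (ii) an induced odd cycle of length $k$ forces genus at least $k+1$ (Lemma~\ref{genusbound}). These two lemmas reduce the lower bound to showing that, among Penner maps on an odd $k$-cycle with $k\le 2k-1$, the flow-difference-$1$ example on the longest cycle is smallest. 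That in turn follows from an explicit monotonicity of the function $f(d,l)$ in $|d/l|$ (Propositions~\ref{function} and~\ref{specialisation}), together with Perron--Frobenius monotonicity under passing to subgraphs (Lemma~\ref{treebound}). No skein relations, no equivariant lifting argument, and no organisation of all configurations by crossing changes are needed.

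The program you sketch---connecting arbitrary Penner configurations on $N_{2k}$ to the minimiser by crossing changes with monotone Alexander root---is far more ambitious than required and has no obvious monotonicity statement available: the paper's skein computations compare only adjacent flow differences on a \emph{fixed} cycle or enriched cycle, not arbitrary intersection graphs.
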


Such a description is possible due to a rotational symmetry of the mapping classes
realising~$\delta_P(N_{2k})$. Unfortunately, the mapping classes
realising~$\delta_P(N_{2k+1})$ do not have such a rotational symmetry, so we do not obtain
such a concrete description of the minimal dilatations~$\delta_P(N_{2k+1})$.
However, we do have a description of~$\delta_P(N_{2k+1})$ as the largest
eigenvalue of a certain product of matrices, so using a computer we can
compute~$\delta_P(N_{2k+1})$ for at least up to $k=100$. These computations
strongly suggest the following.

\begin{conj}
  The limit $\lim_{k\to\infty}\delta_P(N_{2k+1})$ is the largest real root of
  the polynomial $x^4-8x^3+13x^2-8x+1$, approximately~$6.071360241468951$.
\end{conj}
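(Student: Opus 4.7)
The plan is to exploit the matrix-product description of $\delta_P(N_{2k+1})$ mentioned just above the conjecture and to pass to a transfer-matrix limit. My expectation is that the mapping classes realizing $\delta_P(N_{2k+1})$, while not rotationally symmetric, should still be \emph{asymptotically periodic}: they should consist of a fixed left tail, $k$ copies of a single middle building block, and a fixed right tail. Correspondingly, the non-negative integer matrix $M_k$ whose Perron eigenvalue is $\delta_P(N_{2k+1})$ should decompose, in a basis adapted to an invariant train track, as
\[
M_k \;=\; L \cdot B^{\,k} \cdot R,
\]
where $L$, $B$, and $R$ are fixed matrices of size independent of $k$, and $B$ is the key transfer matrix.

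The first step is therefore to extract, from the minimizing constructions used in the proof of Theorem~\ref{dilatationlimits}(b), the explicit block structure and in particular the matrix $B$. Once this is done, for each fixed $\lambda$ the eigenvalue problem $M_k v = \lambda v$ reduces to a linear recurrence of constant order with a $\lambda$-dependent transfer matrix $B(\lambda)$ whose size does not grow with $k$. The characteristic polynomial of $M_k$ should then factor as a product of a boundary determinant (depending on $L$, $R$) and a determinant involving $B(\lambda)^{k}$, yielding a finite-state characterization of the roots.

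The second step is to analyze the $k \to \infty$ limit. Generically, the Perron root $\lambda_k$ of $M_k$ should converge to the smallest $\lambda$ at which the spectral radius of $B(\lambda)$ equals one, i.e., at which the bulk recurrence acquires a marginal solution compatible with the boundary data. This limiting condition is a polynomial equation in $\lambda$ independent of $k$, and the conjecture is precisely that it is $x^4-8x^3+13x^2-8x+1$. The palindromic form of this polynomial, which under $y = x + x^{-1}$ reduces to $y^2-8y+11 = 0$, strongly suggests that the relevant $B(\lambda)$ has effective rank~$2$, so that after diagonalization the limit equation is governed by a $2 \times 2$ spectral problem.

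The main obstacle, in my view, is twofold. First, one must verify that the minimizers for different odd $k$ are genuinely related by inserting copies of a \emph{single} middle block; the lack of symmetry in odd genus makes the existence of such an exact periodic family non-obvious, and ruling out that the minimizer changes structurally infinitely often will likely require a combinatorial uniqueness statement extracted from the proof of Theorem~\ref{dilatationlimits}(b). Second, even granting the block structure, one must rule out that $\lambda_k$ is driven asymptotically by the boundary factors $L$, $R$ rather than by the bulk $B$. A clean route would combine a Perron--Frobenius argument, showing that $B$ is primitive with top eigenvector pairing nontrivially with both $L$ and $R$, with an explicit resultant computation extracting $x^4-8x^3+13x^2-8x+1$ from the vanishing condition. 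The alternative Alexander-polynomial approach used elsewhere in the paper could serve as a useful cross-check: the skein relation should produce a linear recursion for the Alexander polynomials of the associated fibred links, whose limiting characteristic equation one can compare directly against the conjectured quartic.
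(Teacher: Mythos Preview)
The statement you are attempting to prove is a \emph{conjecture} in the paper, not a theorem: the paper offers no proof, only numerical evidence (computations of $\delta_P(N_{2k+1})$ up to $k\approx 100$ that agree with the conjectured limit to many decimal places). There is therefore no paper proof to compare your proposal against.

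Your transfer-matrix strategy is a plausible line of attack, and one of your two stated obstacles is in fact already resolved by the paper. You worry that the minimiser for $N_{2k+1}$ might change structurally as $k$ varies; but Theorem~\ref{oddgenus} establishes that for every odd $l$ the minimiser is the mapping class $\psi_l$ on the enriched $l$-cycle $P_l$ with flow difference~$1$. These form a completely uniform family: passing from $\psi_{l}$ to $\psi_{l+2}$ inserts two vertices into the cycle with the same local orientation pattern. So the ``asymptotic periodicity'' you hope for is actually exact periodicity of the combinatorics, and the Perron--Frobenius eigenvector equations~\eqref{PF} for $\rho(\psi_l)$ do reduce to a fixed-order $\lambda$-dependent linear recurrence along the cycle, with boundary conditions at the distinguished vertex~$v_1$ and the extra vertex~$v_{2n}$.

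What remains genuinely open is your second obstacle: carrying out the limiting analysis and extracting the quartic. One should be careful with the formulation $M_k = L\cdot B^k\cdot R$ in fixed dimension, since $\rho(\psi_l)$ itself is an $(l+1)\times(l+1)$ matrix; the correct object is the transfer matrix of the eigenvector recurrence, not a factorisation of $\rho(\psi_l)$. The substitution $y=x+x^{-1}$ reducing the conjectured quartic to $y^2-8y+11=0$ does suggest a two-dimensional transfer problem, which is consistent with a second-order recurrence along the cycle. But the actual derivation of $x^4-8x^3+13x^2-8x+1$ from the boundary compatibility condition has not been done, and until that computation is carried out and the boundary terms shown not to dominate, the proposal remains a sketch rather than a proof.
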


For example, $\delta_P(N_{101})$ approximates the conjectured limit until 9 decimal
places and $\delta_P(N_{151})$ approximates it until at least 15 decimal places.

We find it intriguing that the coefficients of this polynomial are Fibonacci numbers, considering that the golden ratio makes a frequent appearance in the literature on minimal dilatations
(for example, see Theorem 1.11 and Question 1.12 in \cite{Hironaka}).

\subsection*{Motivation}
A \emph{mapping class} is a homeomorphism of a surface of finite type, up to isotopy keeping the boundary fixed pointwise.
A mapping class is \emph{pseudo-Anosov} if it has a representative~$f$ for which there exists a pair of transverse singular measured foliations~$\mathcal{F}^u$ and~$\mathcal{F}^s$ such that
~$f(\mathcal{F}^u)=\lambda\mathcal{F}^u$ and~$f(\mathcal{F}^s)=\lambda^{-1}\mathcal{F}^s$.

The dilatation of a pseudo-Anosov mapping class is a measure of its complexity, and one can ask about the minimal dilatation of a pseudo-Anosov
mapping class on a given surface. On orientable closed surfaces, the exact value of the minimal dilatation is known only for genus~$g=1$ and~$2$,
where the case~$g=2$ is due to Cho and Ham~\cite{ChoHam}.
Even when restricting to pseudo-Anosov maps with orientable invariant foliations, the minimal dilatations are known only for~$g=1,2,3,4,5,7$ and~$8$,
see the work of Lanneau and Thiffeault~\cite{LanneauThiffeult}.
Also for closed nonorientable surfaces, the minimal dilatation is known only in finitely many cases, by previous results of the authors~\cite{LiSt}.

A more approachable problem would be to obtain an accurate asymptotic behaviour
for the minimal dilatation as the genus~$g$ of the surface goes to infinity.
In~\cite{Pe2}, Penner showed that the minimal dilatation to the power~$g$ is bounded between positive constants.
However, it is not known if the limit of this normalised
dilatation exists. For some speculation and questions, see the work of Hironaka~\cite{Hironaka} (for orientable surfaces) and the authors~\cite{LiSt} (for nonorientable surfaces).

In this article, we study the dilatation of pseudo-Anosov mapping classes arising from a construction by products of Dehn twists along suitable simple closed curves, due to Penner~\cite{Pe}.
This construction might be representative of general phenomena concerning the minimal dilatation question for two reasons.
Firstly, Penner used roots of pseudo-Anosov mapping classes arising from his construction to obtain examples whose dilatations have minimal asymptotics~\cite{Pe2}.
Secondly, on many nonorientable surfaces, the authors find the minimal dilatation among pseudo-Anosov mapping classes with an orientable invariant foliation
by taking roots of pseudo-Anosov mapping classes arising from Penner's construction~\cite{LiSt}.
In this light, Theorem~\ref{dilatationlimits} can be seen as pointing towards the possibility that for nonorientable closed surfaces,
the sequence of minimal normalised dilatations among pseudo-Anosov
mapping classes with an orientable invariant foliation does not converge.

\subsection*{The minimising examples}
Figure~\ref{nonorientable_examples} depicts the genus six and seven nonorientable closed surfaces (with one open disc removed) as a surface obtained by glueing together five or six twisted annuli, respectively.
The minimising examples are obtained by applying a Dehn twist along the core curve of each of those annuli.
The order of the twisting should be, in a sense we make precise later, as bipartite as possible.
We find that for each nonorientable closed surface, the mapping classes of this kind minimise the dilatation among
pseudo-Anosov mapping classes arising from Penner's construction,
where the size of the cycle of annuli glued together is determined by the genus of the surface (Theorems~\ref{evengenus} and~\ref{oddgenus}).

\begin{figure}[h]
\begin{center}
\def\svgwidth{142pt}
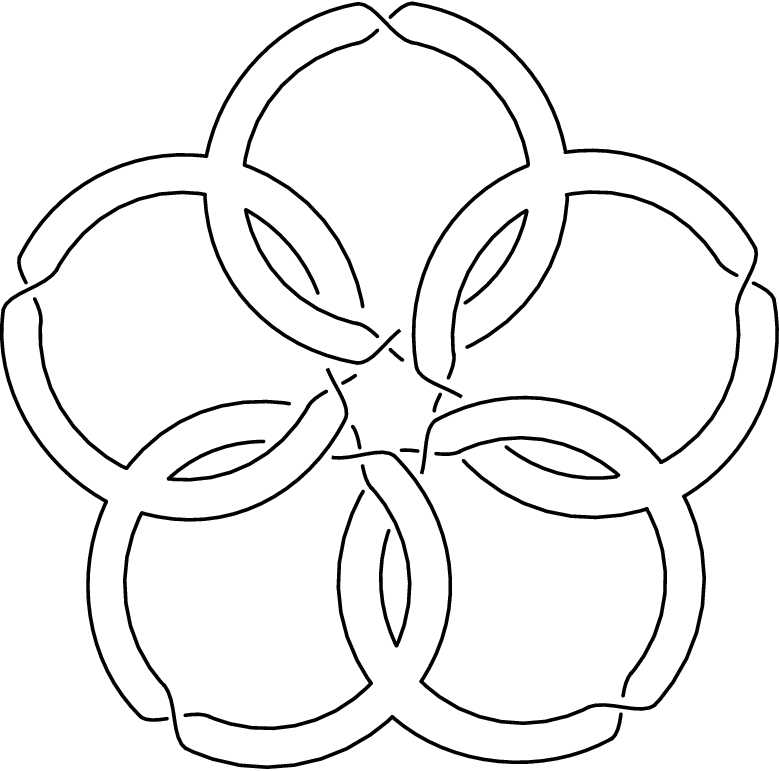
\def\svgwidth{183pt}
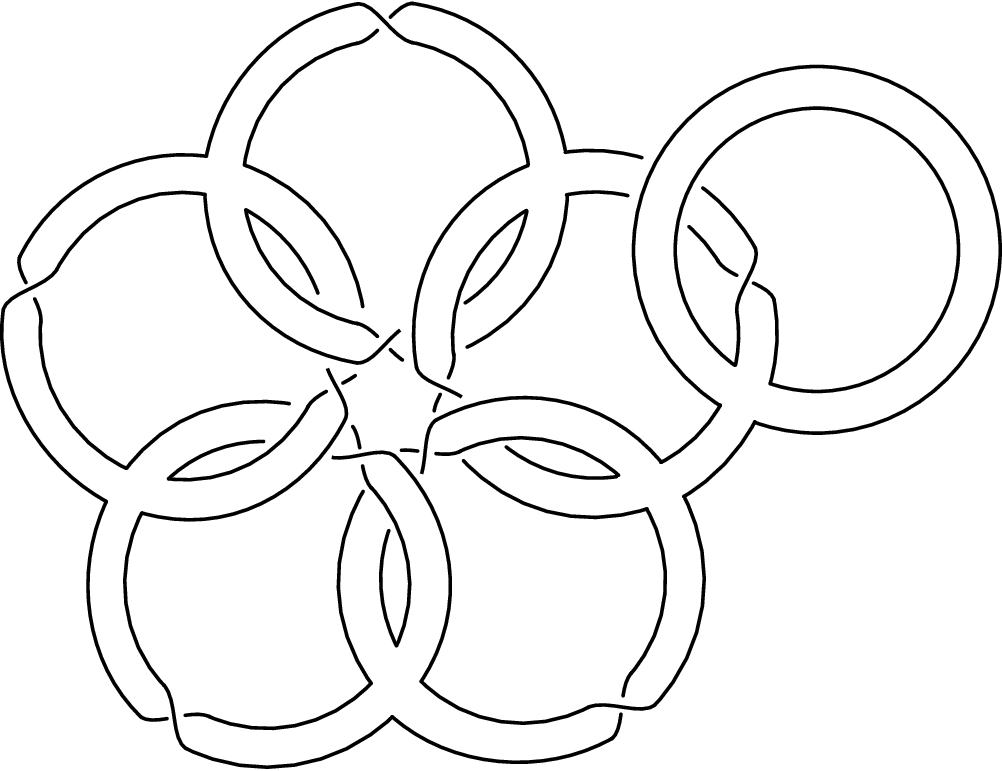
\caption{A connected sum of six (on the left) and seven (on the right) copies of $\mathbf{RP}^2$ minus a disc.}
\label{nonorientable_examples}
\end{center}
\end{figure}

\subsection*{Orientable versus nonorientable surfaces}
A remarkable difference between the case of orientable and nonorientable surfaces is that in the case of orientable closed surfaces~$S_g$,
the limit of the sequence of minimal dilatations~$\delta_P(S_g)$ arising from Penner's construction
exists, whereas in the nonorientable case, the limit of the sequence~$\delta_P(N_g)$ does not.
An orientable double cover argument implies that any accumulation point of~$\delta_P(N_g)$ must be at least~$3+2\sqrt{2}$, which is the limit in the orientable case
by work of the first author~\cite{Li}.
In fact, the limit in the orientable case is the same as for the even genus subsequence in the nonorientable case.

\subsection*{Fibered link techniques for odd genus}
The rotational symmetry of the even genus examples (Figure~\ref{nonorientable_examples}, left)
helps us to determine their dilatations. In order to deal with nonorientable surfaces of odd genus, where such a strong symmetry is lacking,
we use the theory of fibred links in~$\Ss^3$. More precisely, we describe the dilatation of these examples as the largest real root of the Alexander polynomial
of some fibred link in~$\Ss^3$, obtained by plumbing positive and negative Hopf bands to a disc.
In order to distinguish between different product orders of Dehn twists, we calculate the difference of the associated
Alexander polynomials, by using the skein relation for the Alexander polynomial.
We find that this difference is of a very particular form, which allows us to deduce the monotonicity properties needed to single out the examples with minimal dilatation.\\

\noindent
\textbf{Organisation.}
In Section~\ref{Penner_section}, we recall Penner's construction of pseudo-Anosov mapping classes and discuss the particularities of the construction for nonorientable surfaces.
In Sections~\ref{cycle_section} and~\ref{enriched_section}, we develop the theory of dilatations for pseudo-Anosov mapping classes which arise from Penner's construction using
simple closed curves with a cycle or an enriched cycle, respectively, as their intersection graph.
In Sections~\ref{evengenus_section} and~\ref{oddgenus_section}, we single out the dilatation-minimising examples among mapping classes arising from Penner's construction on nonorientable closed surfaces.
We are able to prove Theorem~\ref{evengenusexact} and all statements of Theorem~\ref{dilatationlimits} at the end of Section~\ref{evengenus_section}, except for the existence of the limit~$\lim_{k\to\infty}\delta_P(N_{2k+1})$.
This existence is proved at the end of Section~\ref{oddgenus_section}.
\\

\noindent
\textbf{Acknowledgements.}
We would like to thank Julien March\a'{e} for a fruitful discussion.
This project started during the Moduli Spaces workshop in Ventotene, and we are grateful to have been able to participate.
We also thank Dan Margalit and an anonymous referee for helpful remarks on an earlier version of this article.

\section{Penner's construction}
\label{Penner_section}
In this section, we describe Penner's construction for closed surfaces which need not be orientable.

We assume all mentioned two-sided simple closed curves~$c$ to be equipped with a homeomorphism~$\varphi_{c}$ from a regular neighbourhood~$U_c$ of~$c$ to the standard annulus~$A=\mathbf{S}^1\times[0,1]$.
The \emph{Dehn twist}~$T_c$ along~$c$ is then defined to be the identity outside~$U_c$ and~$\varphi_c^{-1}\circ T_A\circ\varphi_c$ inside~$U_c$, where~$T_A$ is the standard right Dehn twist of the annulus~$A$,
sending an arc that crosses the core curve of~$A$ to an arc that crosses the core curve but also winds around it once in the positive direction. In the case of an oriented surface,
the Dehn twist~$T_c$ along a curve~$c$ is positive or negative
if the homeomorphism~$\varphi_c$ is orientation-preserving or orientation-reversing, respectively.

In Penner's construction for orientable surfaces, we ask that if two curves intersect, one should be twisted along positively and the other should be twisted along negatively.
The notion of a positive or a negative Dehn twist does not make sense on a nonorientable surface, but one can still ask that locally at any intersection point,
the twisting should go in different directions: we say two curves~$c_1$ and~$c_2$ intersect \emph{inconsistently} if for every point~$p\in c_1\cap c_2$ the pullbacks of the orientation of~$A$ by~$\varphi_{c_1}$ and~$\varphi_{c_2}$ disagree.

\begin{thm}[Penner's construction]
\label{pennerconstruction_thm}
Let~$\{c_i\}$ be a collection of at least two two-sided curves which intersect inconsistently and without bigons, and whose union fills a closed surface~$\Sigma$.
Let~$\mathcal{P}$ be the monoid generated by the Dehn twists~$T_{c_i}$.
Define~$\rho: \mathcal{P} \to \mathrm{SL}(n, \Z)$ by
\begin{align*}
\rho(T_{c_i}) = I + R_{c_i},
\end{align*}
and extend linearly,
where the matrices~$R_{c_i}$ are obtained from the
geometric intersection matrix~$\Omega$
of the curves~$\{c_1,\dots,c_n\}$
by setting all entries to zero which are not in the row corresponding to~$c_i$.
Then each~$\phi\in\mathcal{P}$ such that every~$c_i$ gets twisted along at least once is pseudo-Anosov and its dilatation equals the Perron-Frobenius eigenvalue of~$\rho(\phi)$.
\end{thm}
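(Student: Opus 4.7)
My plan is to follow Penner's original argument in~\cite{Pe} for orientable surfaces, highlighting the modifications needed in the nonorientable setting. The first step is to construct a bigon track $\tau$ carrying all the $c_i$: take a small regular neighbourhood of $\bigcup c_i$ and, at each intersection point $p \in c_i \cap c_j$, smooth the crossing in the direction prescribed by the local annular framings $\varphi_{c_i}, \varphi_{c_j}$. The inconsistent intersection hypothesis ensures these framings disagree at every intersection, so there is a canonical smoothing choice; the bigon-free and filling hypotheses then imply that $\tau$ is recurrent and fills $\Sigma$. The branches of $\tau$ are naturally indexed by $\{c_1,\dots,c_n\}$, so a transverse measure on $\tau$ is encoded by a vector $w \in \R^n_{\ge 0}$. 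Observe that this replaces the orientable ``positive/negative curve'' dichotomy by a purely local condition, which is what allows the construction to carry over.

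Next, I would compute the action of a single Dehn twist $T_{c_i}$ on the space of weights. A weight vector $w$ represents the class $\sum_j w_j c_j$, whose geometric intersection with $c_i$ equals $\sum_j \Omega_{ij} w_j$. After applying $T_{c_i}$, only the weight on the branch corresponding to $c_i$ changes, and it picks up exactly this intersection number, so the induced map is $w \mapsto (I + R_{c_i})w$. Because this calculation is purely local at intersection points, it carries over verbatim from the orientable case. Each matrix $I + R_{c_i}$ lies in $\mathrm{SL}(n,\Z)$, so the monoid map $\rho: \mathcal{P} \to \mathrm{SL}(n,\Z)$ is well-defined as in the statement.

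For the final step, suppose $\phi \in \mathcal{P}$ twists along every $c_i$ at least once. Then $\rho(\phi)$ is a nonnegative integer matrix which, in view of the filling and bigon-free conditions (in particular the connectedness of the intersection graph of the $c_i$ and the fact that each row of $\Omega$ is nonzero), is primitive. The Perron-Frobenius theorem yields a unique largest real eigenvalue $\lambda > 1$ with strictly positive eigenvector $v$. The measure on $\tau$ associated with $v$ is stretched by $\phi$ exactly by $\lambda$, and dually there is a measured foliation contracted by the same factor, certifying $\phi$ as pseudo-Anosov with dilatation $\lambda$.

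The main obstacle I expect is the last geometric step: verifying that on a nonorientable surface, the stretched and contracted invariant measured foliations assemble into a genuine pseudo-Anosov structure even when these foliations fail to be transversely orientable. This is where Thurston's general definition of pseudo-Anosov mapping class, which allows one-sided invariant foliations, must be used in place of the transversely orientable version familiar from the orientable setting. All remaining pieces of the argument — the construction of $\tau$, the linear formula for $\rho(T_{c_i})$, and the Perron-Frobenius conclusion — are local or purely algebraic and transport unchanged from Penner's orientable proof.
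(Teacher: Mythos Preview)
The paper does not actually prove this theorem: immediately after the statement it defers to Penner's original article~\cite{Pe}, Fathi's alternative approach~\cite{Fa}, and the second author's treatment of the nonorientable case~\cite{Str}. Your plan is a faithful outline of Penner's original bigon-track argument, with the correct observation that the inconsistent-intersection hypothesis replaces the positive/negative bipartition and that the remaining steps are local or algebraic; this is precisely the approach the paper is citing, so there is nothing to compare.
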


For more details and proofs, see Penner's original article~\cite{Pe} or Fathi's alternative approach~\cite{Fa}.
Penner's construction for nonorientable surface is also explained in more detail by the second author~\cite{Str}.

We call the dilatation of a mapping class arising from Penner's construction a \emph{Penner dilatation}.
In fact, a Penner dilatation only depends on the intersection graph
of the collection~$\{c_i\}$ of curves used and the product order of the twists.
Here, the \emph{intersection graph} has one vertex for each curve~$c_i$ and two vertices are connected by an edge of multiplicity~$k$ if and only if their corresponding curves intersect~$k$ times.

\begin{ex}
\label{cycle_example}
\emph{
Take an odd number~$l$ of annuli and glue them together to form a circle.
Insert two half-twists in each annulus in order to produce a nonorientable surface. This is depicted in Figure~\ref{nonorientable_examples} on the left for~$l=5$.
Finally, glue in a disc along the boundary component (there is only one boundary component)
to obtain a closed nonorientable surface, which by a direct Euler characteristic count is shown to be of genus~$g=l+1$.
Number the core curves~$c_i$ of the annuli in the clockwise fashion. It is not hard to see that one can find homeomorphisms~$\varphi_{c_i}$ from regular neighbourhoods of the curves~$c_i$
to the standard annulus so that the curves~$c_i$ intersect inconsistently. Furthermore, there are no bigons, since each pair of curves~$c_i$ and~$c_j$ intersects at most once.
It follows that the collection of core curves~$c_i$ satisfies the hypotheses of Penner's construction.
The intersection graph is a cycle of length~$l$.
}\end{ex}

\begin{ex}\emph{
The cycles from Example~\ref{cycle_example} can be modified. For example, we can glue an extra band to one of the~$l$ bands forming the cycle.
This is depicted in Figure~\ref{nonorientable_examples} on the right for~$l=5$.
As in Example~\ref{cycle_example}, one can see that there is still only one boundary component, along which we glue in a disc to obtain
a closed nonorientable surface of genus~$g=l+2$.
It is directly checked that the core curves of these examples also satisfy the hypotheses of Penner's construction.
The intersection graph is a cycle of length~$l$ with an extra vertex added, a graph which we call an \emph{enriched cycle}.
}\end{ex}

\subsection{Nonorientable surfaces}We now give three simple observations concerning the nonorientable case of Penner's construction,
which will be used later in the paper. Lemma~\ref{notbipartite} hints at why
searching for the minimal dilatation among pseudo-Anosov mapping classes arising from Penner's construction is more complicated
on nonorientable surfaces than on orientable ones: the intersection graph of the curves used in the construction always contains at least one cycle,
while for the minimising examples on closed orientable surfaces, it is a path~\cite{Li}.

\begin{lem}
\label{notbipartite}
If a collection of curves $\{c_i\}$ as in Penner's construction fills a nonorientable surface, then their intersection graph is not bipartite.
\end{lem}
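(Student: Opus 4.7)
The plan is to argue by contradiction: assuming the intersection graph is bipartite, I will build a global orientation of $\Sigma$ from the local orientations implicit in Penner's setup, contradicting nonorientability.

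Each regular neighbourhood $U_{c_i}$ comes equipped with the homeomorphism $\varphi_{c_i}\colon U_{c_i}\to A=\mathbf{S}^1\times[0,1]$ used to define the Dehn twist. Pulling back the standard orientation of $A$, I obtain a preferred local orientation $o_i$ on $U_{c_i}$. The inconsistency condition says precisely that whenever $c_i$ and $c_j$ meet, the orientations $o_i$ and $o_j$ \emph{disagree} at every point of $c_i\cap c_j$; equivalently, the two local orientations on $U_{c_i}\cap U_{c_j}$ (a small disc neighbourhood of each intersection point) are opposite.

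Now suppose the intersection graph is bipartite with parts $B$ and $W$. Define a modified local orientation $o_i'$ on $U_{c_i}$ by keeping $o_i$ if $c_i\in B$ and reversing it if $c_i\in W$. If $c_i$ and $c_j$ intersect, then $c_i$ and $c_j$ lie in opposite parts, so exactly one of their orientations is flipped; combined with the inconsistency, the modified orientations $o_i'$ and $o_j'$ now \emph{agree} on every connected component of $U_{c_i}\cap U_{c_j}$. Hence the family $\{o_i'\}$ assembles into a well-defined orientation on the open set $U=\bigcup_i U_{c_i}$.

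To finish, I use that the $c_i$ fill $\Sigma$: the complement $\Sigma\setminus\bigcup_i c_i$ is a disjoint union of open discs, so by shrinking the neighbourhoods slightly, $\Sigma\setminus U$ is also a disjoint union of open discs. Each such disc is orientable, and its orientation can be chosen to agree with the already-defined orientation of $U$ along the boundary, yielding a global orientation of $\Sigma$ and contradicting nonorientability. The only subtle point — and the step I would double-check carefully — is the local verification that the modified orientations really do agree on each overlap $U_{c_i}\cap U_{c_j}$ at every intersection point (including when $|c_i\cap c_j|>1$); but this is immediate from the inconsistency condition applied pointwise, so I do not expect a genuine obstacle.
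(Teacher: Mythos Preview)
Your proof is correct and follows essentially the same approach as the paper: flip the local orientations on one side of the bipartition so that they agree on all overlaps, then use the filling hypothesis to extend to a global orientation of~$\Sigma$, contradicting nonorientability. Your version simply spells out a few more details (the extension over the complementary discs and the pointwise verification on overlaps) than the paper does.
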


\begin{proof}
Let $\Sigma$ be a nonorientable closed surface, and let~$\{c_i\}$ be a collection of curves as in Penner's construction that fill~$\Sigma$.
Recall that there exist homeomorphisms~$\varphi_{c_i}$ of regular neighbourhoods of the curves $c_i$ to the standard annulus such that at each intersection point the pullback orientations disagree.
If the intersection graph of the curves~$\{c_i\}$ were bipartite, we could simply switch the orientation
of the regular neighbourhoods of the the curves corresponding to one set of the bipartition to obtain a situation
in which at each intersection point, the orientations of the regular neighbourhoods agree. In particular, as the curves~$\{c_i\}$ are assumed to fill the surface~$\Sigma$,
we could extend this consistent orientation to an orientation of the surface~$\Sigma$.
\end{proof}

\begin{lem}
\label{fillingcycle}
Let~$\Sigma$ be any surface, and
let~$\{c_1,\dots, c_l\}$ be a collection of two-sided curves in~$\Sigma$ that intersect inconsistently and with the pattern of a cycle of odd length~$l$.
Then, a small regular neighbourhood~$\Sigma_0$ of the
union of the curves~$c_i$ is homeomorphic to~$N_{l+1}$ minus a disc.
\end{lem}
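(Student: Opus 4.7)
The plan is to verify that $\Sigma_0$ is a connected nonorientable surface of Euler characteristic $-l$ with exactly one boundary component. The classification of surfaces with boundary then identifies $\Sigma_0$ with $N_{l+1}$ minus a disc.

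Connectedness and Euler characteristic are straightforward: $\Sigma_0$ deformation retracts to the $4$-valent graph $\bigcup c_i$, which has $l$ vertices (the intersection points) and $2l$ edges (two arcs per curve, between its two intersection points), so $\Sigma_0$ is connected with $\chi(\Sigma_0)=l-2l=-l$. For nonorientability, I would apply the same $2$-coloring argument as in Lemma~\ref{notbipartite}: if $\Sigma_0$ were orientable, the induced orientations on the annuli $A_i$ would agree or disagree with the given framings $\varphi_{c_i}$, defining signs $\epsilon_i\in\{\pm1\}$; inconsistency at each $v_i$ then forces $\epsilon_i\neq\epsilon_{i+1}$, a proper $2$-coloring of the odd cycle of length $l$, which is impossible.

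The main technical step is showing that $\Sigma_0$ has exactly one boundary component. I plan to trace the boundary directly on the fat-graph model: at each intersection $v_i$, choose local coordinates with $c_i$ horizontal and $c_{i+1}$ vertical, and label the four boundary corners of the vertex disc by $\mathrm{NE}, \mathrm{NW}, \mathrm{SW}, \mathrm{SE}$. Because each curve is two-sided, every arc is an untwisted ribbon, and the inconsistency at $v_i$ determines canonically how a boundary loop passes from one corner at $v_i$ to a corner at an adjacent vertex in the cycle. The expected outcome, verifiable for small cases by explicit tracing, is that each step of the trace advances by one in the cyclic vertex index ($\bmod\ l$) and by one in the cyclic corner rotation $\mathrm{NE}\to\mathrm{NW}\to\mathrm{SW}\to\mathrm{SE}$ ($\bmod\ 4$); since $\gcd(l,4)=1$ for $l$ odd, the combined period is $\operatorname{lcm}(l,4)=4l$, which equals the total number of corners, so the boundary closes up into a single loop. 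The main obstacle will be making this boundary trace precise, carefully tracking how the inconsistency conditions combine with the cyclic orderings at each vertex to produce the claimed advancement rule, with the odd parity of $l$ entering only at the final coprimality step.
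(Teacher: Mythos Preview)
Your approach is correct and parallels the paper's at the structural level---both reduce to verifying $\chi(\Sigma_0)=-l$, nonorientability, and a single boundary component---but your execution of the last step is genuinely different. The paper removes one intersection square to obtain an orientable planar chain of annuli, traces its boundary explicitly via figures, and then checks that either of the two possible inconsistent re-gluings of the square merges all arcs into one loop. You instead propose a direct combinatorial trace on the ribbon graph, closing with a $\gcd(l,4)=1$ argument.

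Your route has two advantages: it makes the role of the oddness of~$l$ completely transparent at the final coprimality step, and it treats nonorientability explicitly via the $2$-colouring obstruction (the paper leaves this implicit in the orientation-reversing gluing of the square, and it really is needed, since a one-holed surface with $\chi=-l$ for odd~$l$ could also be the orientable surface of genus $(l+1)/2$). The paper's route has the advantage that the chain is planar, so the boundary trace is unambiguous and can simply be read off a picture without tracking framings.

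The one point to watch in your argument: the rule ``advance by one vertex, rotate by one corner'' is correct, but only after you fix the local coordinate conventions coherently---specifically, orient the chart at~$v_i$ by the $\varphi_{c_i}$-pullback orientation, so that the inconsistency at~$v_i$ forces the $\varphi_{c_{i+1}}$-orientation there to be the opposite one. With this convention one checks that the four boundary arcs of $A_{i+1}$ send $\mathrm{NE}\mapsto\mathrm{NW}\mapsto\mathrm{SW}\mapsto\mathrm{SE}\mapsto\mathrm{NE}$ uniformly in~$i$, and your $\gcd$ argument goes through. Without fixing conventions uniformly, individual steps could a priori rotate by $+1$ or $-1$, and a mixture of signs does not immediately yield a single cycle; this coherence step is exactly where the inconsistency hypothesis does its work and should be made explicit.
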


In particular, a collection of two-sided curves $\{c_i\}$ that intersect inconsistently and with the pattern of an odd cycle can only fill a nonorientable closed surface of even genus.
Indeed, applying Lemma~\ref{fillingcycle} to a collection of curves~$\{c_1,\dots, c_l\}$ that in addition fill a closed surface~$\Sigma$, we directly obtain the following statement.

\begin{cor}
\label{fillingcyclecor}
Let~$\Sigma$ be a closed surface, and
let~$\{c_1,\dots, c_l\}$ be a collection of two-sided curves in~$\Sigma$ that intersect inconsistently and with the pattern of a cycle of odd length~$l$.
If the collection of curves~$\{c_1,\dots, c_l\}$ fills~$\Sigma$, then~$\Sigma$ is homeomorphic to~$N_{l+1}$.
\end{cor}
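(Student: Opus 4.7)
The plan is to derive the corollary essentially as an immediate consequence of Lemma~\ref{fillingcycle}, with the filling hypothesis supplying the missing information about the complement of a regular neighbourhood of the curves.

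First, I would invoke Lemma~\ref{fillingcycle} on the collection $\{c_1,\dots,c_l\}$ to obtain a small regular neighbourhood $\Sigma_0$ of the union of the curves which is homeomorphic to $N_{l+1}$ minus an open disc. In particular, $\Sigma_0$ has exactly one boundary component. Next, I would use the filling hypothesis: a collection of curves filling the closed surface $\Sigma$ means by definition that every component of $\Sigma\setminus\Sigma_0$ is an open disc. Since the number of such complementary disc components equals the number of boundary components of $\Sigma_0$, there is exactly one complementary disc.

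It then follows that $\Sigma$ is obtained from $\Sigma_0 \cong N_{l+1}\setminus D^2$ by capping off its single boundary component with a disc. Since capping off the boundary of $N_{l+1}\setminus D^2$ recovers $N_{l+1}$ (up to homeomorphism the gluing is unique because the mapping class group of the circle acts transitively on the relevant gluing data, and $N_{l+1}$ is nonorientable), we conclude that $\Sigma \cong N_{l+1}$.

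There is no real obstacle here; the substantive work has already been done in Lemma~\ref{fillingcycle}. The only subtlety is ensuring that the complement of $\Sigma_0$ is a single disc rather than a disjoint union of discs, and this is forced by the fact that $\Sigma_0$ has a single boundary component, as recorded by the lemma.
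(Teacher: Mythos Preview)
Your argument is correct and matches the paper's approach exactly: the paper simply states that the corollary follows directly from Lemma~\ref{fillingcycle} applied to a filling collection, and you have spelled out precisely that deduction (regular neighbourhood is $N_{l+1}$ minus a disc with one boundary component, filling forces the complement to be a single disc, capping off recovers $N_{l+1}$).
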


\begin{proof}[Proof of Lemma~\ref{fillingcycle}]
 Let~$\{c_1,\dots, c_{l}\}$ be a collection of two-sided curves that intersect
 inconsistently and with the pattern of a cycle of odd length~$l$. We want to show
 that the boundary of a small regular neighbourhood~$\Sigma_0$ of the union of
 the curves~$c_i$ has exactly one boundary component.
 The statement then follows directly from the fact that~$\Sigma_0$ is homotopy equivalent to a wedge of~$l+1$ circles, and hence has Euler characteristic~$-l$.

Consider the surface~$\Sigma_1$ obtained from~$\Sigma_0$ by removing a \emph{square}: the intersection of the annulus neighbourhoods of~$c_1$ and~$c_{l}$. The surface~$\Sigma_1$ is homeomorphic to the surface obtained by chaining together~$l$ annuli and removing a square from the first and last annuli as in Figure~\ref{cutchain}.
\begin{figure}[h]
\begin{center}
\def\svgwidth{370pt}
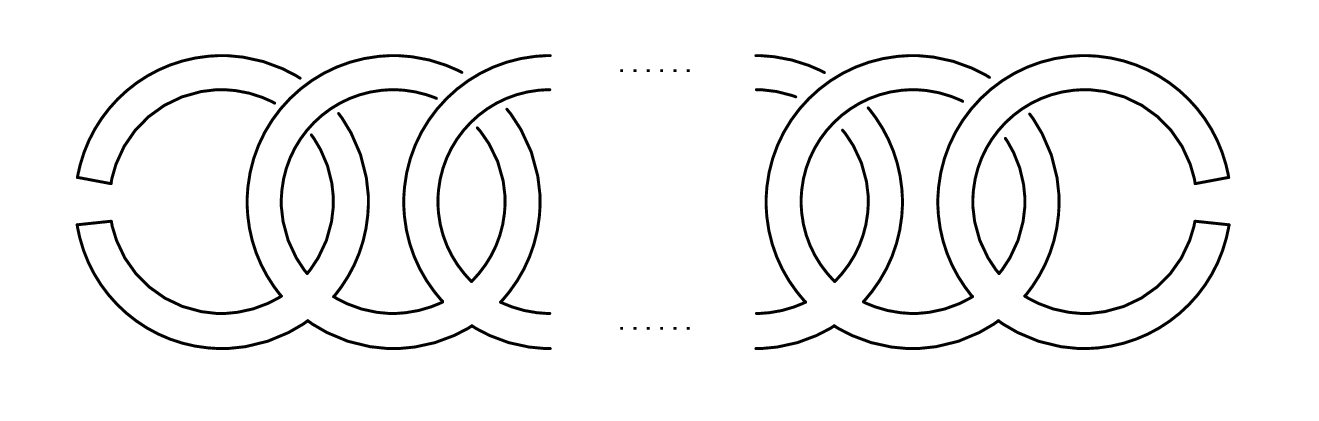
\caption{The surface~$\Sigma_1$. The letters~$A,B,C$ and~$D$ indicate how the strands of the boundary~$\partial\Sigma_1$ connect.}
\label{cutchain}
\end{center}
\end{figure}
The boundary of the first and last annuli each have four arcs on the boundary of~$\Sigma_1$. In~$\partial \Sigma_0 \cap \partial \Sigma_1$, the four arcs on the first annulus are connected to the four arcs on the last annulus as shown on Figure~\ref{cutchain}.

To reverse the process and construct the surface~$\Sigma_0$ from~$\Sigma_1$, we need to glue~$\partial \Sigma_1 \setminus \partial \Sigma_0$ to a square. Since the curves~$c_i$ are assumed to intersect inconsistently, there are two ways to do this, see Figure~\ref{glueingsquare}.
\begin{figure}[h]
\begin{center}
\def\svgwidth{390pt}
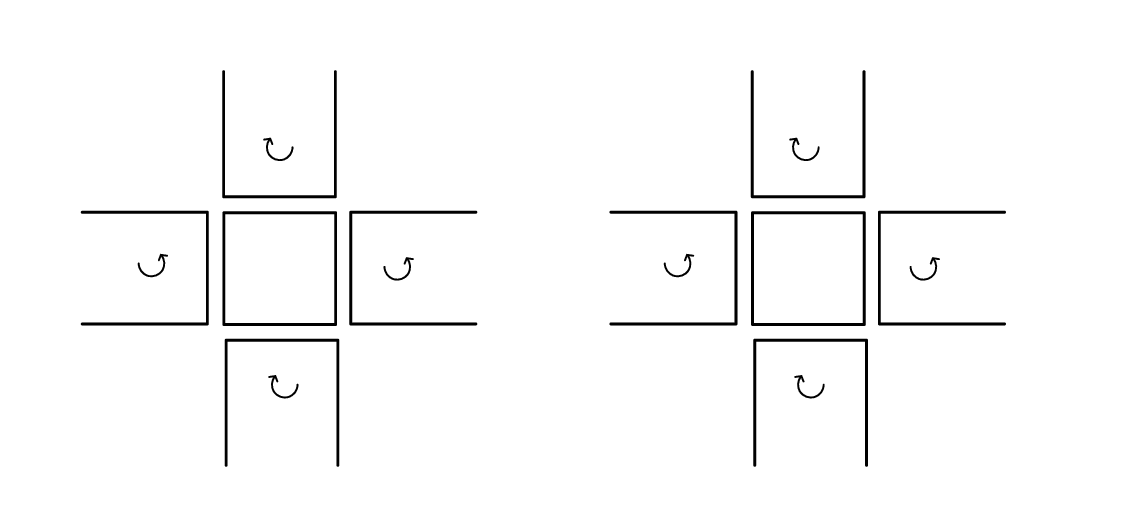
\caption{Glueing together the leftmost annulus (depicted vertically) and the rightmost annulus (depicted horizontally) from Figure~\ref{cutchain} so that the orientations do not agree on the intersection.}
\label{glueingsquare}
\end{center}
\end{figure}
We can see that in each case, all arcs get identified to a single boundary component.
\end{proof}

A subgraph~$\Gamma'$ of the intersection graph~$\Gamma$ is \emph{induced} if it contains all edges of~$\Gamma$ that connect pairs of vertices of~$\Gamma'$.

\begin{lem}
\label{genusbound}
Let~$\Sigma$ be a surface filled by a collection of curves~$\{c_i\}$ satisfying the hypotheses of Penner's construction.
If the intersection graph~$\Gamma$ contains a cycle of odd length~$l$ as an induced subgraph, then~$\Sigma$ is nonorientable and its genus is greater than or equal to~$l+1$.
\end{lem}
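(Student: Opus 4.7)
The plan is to restrict attention to the curves realising the induced odd cycle, apply Lemma~\ref{fillingcycle} to produce a nonorientable subsurface of $\Sigma$ of known topological type, and then extract the genus bound from an Euler characteristic comparison.

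Concretely, I would let $\{c_1,\dots,c_l\}\subset\{c_i\}$ be the curves corresponding to the vertices of the induced odd cycle in $\Gamma$. Because the subgraph is \emph{induced}, these curves have no intersections among themselves beyond those prescribed by the cycle, so they intersect inconsistently with exactly the pattern of an odd cycle of length $l$. Lemma~\ref{fillingcycle} then applies and yields that a small regular neighbourhood $\Sigma_0\subset\Sigma$ of $c_1\cup\dots\cup c_l$ is homeomorphic to $N_{l+1}$ minus a disc. Since $\Sigma_0$ is nonorientable and embeds into $\Sigma$, the closed surface $\Sigma$ must itself be nonorientable.

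For the genus bound, I would set $\Sigma_1=\overline{\Sigma\setminus\Sigma_0}$, so that $\Sigma=\Sigma_0\cup\Sigma_1$ with $\Sigma_0\cap\Sigma_1=\partial\Sigma_0$ a single circle. Since $\Sigma$ is closed and connected and $\partial\Sigma_0$ is two-sided, $\Sigma_1$ is a compact connected surface with exactly one boundary component, hence $\chi(\Sigma_1)\le 1$ by the classification of surfaces. Combining this with $\chi(\Sigma_0)=-l$ (which is exactly the computation used in Lemma~\ref{fillingcycle}) and the Mayer--Vietoris formula $\chi(\Sigma)=\chi(\Sigma_0)+\chi(\Sigma_1)-\chi(\Ss^1)$ gives $\chi(\Sigma)\le 1-l$. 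Writing $\chi(\Sigma)=2-g$ for the nonorientable genus $g$ then yields $g\ge l+1$, as desired.

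The main subtlety I expect lies in the very first step: the hypothesis that the odd cycle is \emph{induced}—rather than merely a subgraph that happens to be a cycle—is precisely what rules out extra intersections among $c_1,\dots,c_l$ that would alter the topology of $\Sigma_0$ and invalidate the direct application of Lemma~\ref{fillingcycle}. Once this point is in place, the remainder of the argument reduces to routine Euler characteristic bookkeeping.
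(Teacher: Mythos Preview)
Your proof is correct and follows essentially the same approach as the paper: both use the inducedness hypothesis to ensure the curves $c_1,\dots,c_l$ intersect exactly in the cycle pattern, apply Lemma~\ref{fillingcycle} to identify the regular neighbourhood $\Sigma_0$ as $N_{l+1}$ minus a disc, and conclude nonorientability and the genus bound. The only cosmetic difference is that the paper phrases the last step as ``$\Sigma$ has $N_{l+1}$ as a connected summand,'' whereas you spell out the equivalent Euler characteristic computation via Mayer--Vietoris; these are the same argument.
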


\begin{proof}
Let~$c_{i_1},\dots ,c_{i_l}\subset\Sigma$ be the curves corresponding to the induced cycle of length~$l$.
This means that two consecutive curves~$c_{i_j}$ and~$c_{i_{j+1}}$ intersect once, where the index~$j$ is taken~$(\mathrm{mod}\ l)$.
Since the cycle of length~$l$ is an induced subgraph of~$\Gamma$, there are no other intersections between curves~$c_{i_j}$.
By Lemma~\ref{fillingcycle}, a regular neighbourhood~$U$ of the union of the curves~$c_{i_1},\dots ,c_{i_l}$ is not orientable, and hence neither is~$\Sigma$.
Furthermore,~$U$ has exactly one boundary component, and~$\chi(U)=-l$.
Thus, the surface~$\Sigma$ has the nonorientable closed surface of genus~$l+1$ as a connected summand, and, in particular, is of genus at least~$l+1$ itself.
\end{proof}

\section{Dilatation theory of the cycle}
\label{cycle_section}
The goal of this section is to describe the dilatations arising from Penner's construction using curves with an odd cycle as their intersection graph,
such as in Example~\ref{cycle_example}.

Let~$C_l$ be a cycle of length~$l$ encoding the intersection of curves used in Penner's construction: to each curve~$c_i$ corresponds a vertex~$v_i$ of~$C_l$.
We now study mapping classes defined by a word~$w$ in the Dehn twists~$T_{c_i}$ so that every twist~$T_{c_i}$ appears exactly once.
To every such word, we associate an acyclic orientation of~$C_l$: an edge between~$v_i$ and~$v_j$ is directed from~$v_i$ to~$v_j$ if~$T_{c_i}$ occurs in~$w$ before~$T_{c_j}$ and vice-versa.
The \emph{flow difference} of an acyclic orientation of the cycle~$C_l$ is the number of edges oriented in the clockwise sense minus the number of edges oriented in the anticlockwise sense.

\begin{lem}
\label{flowdiffconjugacy}
Let~$w$ and~$w'$ be two words in the Dehn twists~$T_{c_i}$ so that every twist appears exactly once in each of them.
If~$w$ and~$w'$ induce acyclic orientations of~$C_l$ with the same flow difference, then the matrices~$\rho(w)$ and~$\rho(w')$ from Penner's construction are conjugate.
\end{lem}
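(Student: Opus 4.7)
My plan is to exhibit a chain of words connecting $w$ and $w'$ in which each link is a move preserving both the conjugacy class of $\rho$ and the flow difference. I will use two basic moves: (a) swap two adjacent-in-the-word Dehn twists $T_{c_i}, T_{c_j}$ whenever $v_i, v_j$ are non-adjacent in $C_l$; and (b) cyclically rotate the word, moving the leftmost twist to the right end. Move (a) even preserves $\rho(w)$ exactly, because the row-supported form of $R_{c_i}$ forces $R_{c_i}R_{c_j} = R_{c_j}R_{c_i} = 0$ when $v_i, v_j$ are non-adjacent, so $\rho(T_{c_i})$ and $\rho(T_{c_j})$ commute. Move (b) conjugates $\rho(w)$ by $\rho(T_{c_{i_1}})$, where $T_{c_{i_1}}$ is the twist being rotated.

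I then check that both moves preserve the flow difference. Move (a) does not alter the acyclic orientation of $C_l$ at all. For move (b), the vertex $v$ rotated from first to last is necessarily a source in the current orientation, and the two $C_l$-edges incident to a single vertex of the cycle consist of one clockwise and one anticlockwise edge; reversing both leaves the flow difference unchanged.

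The heart of the argument is the combinatorial claim that any two acyclic orientations of $C_l$ with the same flow difference lie in the same orbit of moves (a) and (b). My strategy is to realize \emph{source flips}: given any source $v_s$ of the current orientation, I reverse the two $C_l$-edges at $v_s$ (thereby turning $v_s$ into a sink) while leaving the rest of the orientation intact. Because $v_s$ is a source, its $C_l$-neighbors come after $v_s$ in the linear order, so every vertex preceding $v_s$ is non-$C_l$-adjacent to $v_s$; hence move (a) can shuttle $v_s$ to the first position, after which a single application of move (b) sends $v_s$ to the last position, reversing exactly the two edges at $v_s$ and nothing else. Sink-to-source flips are realized symmetrically via the inverse cyclic rotation. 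Encoding an acyclic orientation by the set $S \subseteq \{e_1, \ldots, e_l\}$ of clockwise-oriented edges, a source flip at $v_s$ slides one chip from $e_s$ to $e_{s-1}$ along the cyclic ordering of edges; standard chip-moving arguments show that such single-step slides act transitively on $k$-subsets of a cyclic $l$-set whenever $0 < k < l$, which is precisely the condition that the orientation be acyclic.

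The main obstacle I expect is the realizability step for the source flip: the delicate point is that the swaps used to shuttle $v_s$ to the front must all be legal instances of move (a). This is guaranteed by $v_s$ being a source, as explained above, but is the place where the argument could a priori break. Once the source flip is available, the reduction to chip-slide transitivity on $k$-subsets of a cycle makes the combinatorial transitivity transparent.
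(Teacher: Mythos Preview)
Your argument is correct and follows essentially the same route as the paper: both reduce the lemma to the fact that source-to-sink operations connect any two acyclic orientations of $C_l$ with the same flow difference, and that each such operation corresponds to a conjugation at the level of $\rho$. The difference is one of packaging. The paper invokes two external results --- Shi's bijection between commutation classes of words and acyclic orientations, and Pretzel's theorem that source-to-sink moves act transitively on orientations with a fixed flow class --- whereas you unpack both by hand. Your move~(a), together with the computation $R_{c_i}R_{c_j}=0$ for non-adjacent $i,j$, is exactly the content of Shi's correspondence needed here; your realisation of an arbitrary source flip via moves~(a) and~(b), followed by the chip-sliding transitivity on $k$-subsets of a cyclic $l$-set, is a direct proof of Pretzel's theorem in the special case of a cycle. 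Your self-contained version avoids external references and makes the mechanism completely transparent; the paper's version is shorter and situates the argument in the broader combinatorial context (Coxeter elements, reorientation classes) where these facts live.
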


\begin{proof}
Let~$W$ be the set of words in the Dehn twists~$T_{c_i}$ so that every twist appears exactly once.
By a result of Shi~\cite{Shi}, there exists a one-to-one correspondence between acyclic orientations of the cycle and words in~$W$ up to the commutation relation of Dehn twists
(which commute exactly if the defining curves do not intersect).
Moreover, two acyclic orientations of the cycle are connected by a sequence of source-to-sink operations if and only if they have the same flow difference by a result of Pretzel~\cite{Pretzel}.
Here, a \emph{source-to-sink operation} denotes the process of making a source of the directed graph into a sink by switching the orientations of all adjacent edges.
By Shi's correspondence, on the level of the words, making a source into a sink or vice-versa translates to a conjugation by the Dehn twist along the corresponding curve.
In particular, the two matrices~$\rho(w)$ and~$\rho(w')$ associated with two pseudo-Anosov mapping classes arising from Penner's construction
are conjugate if the words~$w$ and~$w'$ induce acyclic orientations of~$C_l$ with the same flow difference.
\end{proof}

Since conjugate matrices have the same eigenvalues, we only have to study one standard representative for each flow difference.
By symmetry, we also have to consider only the absolute value of the flow difference.

\subsection{A formula for the dilatation}
The goal of this section is to show that for a cycle of fixed length, the dilatation of Dehn twist products is a strictly increasing function of the absolute value of the flow difference.
This follows from the Propositions~\ref{function} and~\ref{specialisation} below, which give the means to directly compute the dilatation given the length of the cycle and the flow difference.

Let~$C=\{(x,y)\in\R^2:y>0, |x|<y\}$. Furthermore, define the function~$f:C\to \R_{>0}$ by mapping~$(x,y)$ to the largest real solution of the equation~$t-t^{\frac{y+x}{2y}}-t^{\frac{y-x}{2y}}-1=0$.

\begin{prop}
\label{function}
The function~$f$ is well-defined and
\begin{enumerate}
\item is~$0$-homogeneous, and, in fact, only depends on~$|\frac{x}{y}|$,
\item is continuous and strictly increasing in~$|\frac{x}{y}|$.
\end{enumerate}
\end{prop}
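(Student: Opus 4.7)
The plan is to reduce the equation to a one-parameter family by substituting $u = (y+x)/(2y)$, so that the condition $(x,y) \in C$ is equivalent to $u \in (0,1)$ and the second exponent automatically equals $1-u$. The defining equation collapses to $g_u(t) := t - t^u - t^{1-u} - 1 = 0$. The $0$-homogeneity of $f$ is then immediate, and the involution $u \leftrightarrow 1-u$ corresponds to $x \leftrightarrow -x$ while merely swapping the two middle terms of $g_u$; this establishes that $f$ depends only on $|x/y|$. It therefore suffices to work with $u \in [1/2, 1)$, parametrised by $s := |x|/y = 2u-1 \in [0,1)$.

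For well-definedness, note $g_u(1) = -2$ while $g_u(t) \to +\infty$ as $t \to \infty$, and $g_u'(t) = 1 - u t^{u-1} - (1-u) t^{-u} \to 1$ at infinity, so $g_u$ is eventually strictly increasing; hence the positive root set is bounded and has a largest element $\tau(u)$. For strict monotonicity I would fix any $t > 1$ and differentiate
\[
\frac{d}{du}\bigl(t^u + t^{1-u}\bigr) \;=\; (\ln t)(t^u - t^{1-u}),
\]
which is strictly positive for $u \in (1/2,1)$. Thus for $u_1 < u_2$ in $[1/2,1)$ we get $g_{u_2}(\tau(u_1)) < g_{u_1}(\tau(u_1)) = 0$, and since $g_{u_2}(t) \to +\infty$ the largest root must satisfy $\tau(u_2) > \tau(u_1)$.

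For continuity I would invoke the implicit function theorem, which requires the non-degeneracy $g_u'(\tau(u)) \ne 0$. Writing $\tau = \tau(u)$ and using the defining relation $\tau^u + \tau^{1-u} = \tau - 1$ to eliminate $\tau^{1-u}$ from $\tau \cdot g_u'(\tau) = \tau - u\tau^u - (1-u)\tau^{1-u}$, a short computation gives
\[
\tau \cdot g_u'(\tau) \;=\; u\tau + (1 - 2u)\tau^u + (1-u).
\]
For $u \in [1/2, 1)$ we have $1 - 2u \le 0$ and $\tau^u < \tau$ (since $\tau > 1$ and $u < 1$), so the right-hand side is bounded below by $u\tau + (1-2u)\tau + (1-u) = (1-u)(\tau + 1) > 0$. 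The implicit function theorem then yields continuous, in fact smooth, dependence of $\tau$ on $u$, which transfers to $f$ through $u = (1+|x/y|)/2$. The main conceptual obstacle is precisely this non-degeneracy: a priori $\tau(u)$ could be a multiple root of $g_u$, and the explicit positive lower bound above is what rules that out; the symmetry $u \leftrightarrow 1-u$ then covers the range $u \in (0, 1/2]$ as well.
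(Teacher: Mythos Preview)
Your argument is correct and follows the same overall reduction as the paper: pass to a one-parameter family via the exponent $u=(y+x)/(2y)$ (the paper uses $s=|x|/(2y)$), observe the symmetry $u\leftrightarrow 1-u$, and use the sign of the $u$-derivative of $t^u+t^{1-u}$ to obtain strict monotonicity. The difference lies in how continuity is handled. The paper notes directly that $\partial_t h(t,s)>0$ for all $t>1$ (indeed $\partial_t h(1,s)=0$ and $\partial_t^2 h>0$), so $h(\cdot,s)$ has a \emph{unique} zero on $(1,\infty)$; continuity of that zero then follows immediately from joint continuity and monotonicity, with no implicit function theorem needed. You instead allow multiple roots a priori, prove non-degeneracy of the largest one via the neat lower bound $\tau\,g_u'(\tau)\ge(1-u)(\tau+1)>0$, and invoke the implicit function theorem. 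This is valid, but it needs one small remark you left implicit: IFT only produces a smooth local branch through $(u_0,\tau(u_0))$, and one must check this branch agrees with the largest-root function $\tau$ for nearby $u$. That follows, for instance, by combining upper semicontinuity of $\tau$ (any limit of roots is a root, hence $\le\tau(u_0)$) with the lower bound $\tau(u)\ge\tilde\tau(u)\to\tau(u_0)$ furnished by the IFT branch. The paper's route is shorter precisely because the global monotonicity of $g_u$ on $(1,\infty)$ settles well-definedness, uniqueness, and continuity simultaneously; your route trades that single observation for an explicit derivative estimate at the root.
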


\begin{prop}
\label{specialisation}
For a tuple~$(d,l)\in\Z^2\cap C$ such that~$d\equiv l(\mathrm{mod}\ 2)$, the value~$f(d,l)$ equals the dilatation of the Penner mapping classes with flow difference~$d$ on the cycle of length~$l$.
\end{prop}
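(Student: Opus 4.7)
By Lemma~\ref{flowdiffconjugacy}, any two words realizing flow difference $d$ produce conjugate matrices under~$\rho$, so by Theorem~\ref{pennerconstruction_thm} it suffices to compute the Perron--Frobenius eigenvalue of $\rho(w)$ for one convenient choice of word~$w$ and verify it equals $f(d, l)$. The plan is to obtain a closed-form equation for this eigenvalue by writing down the eigenvector equations on the cycle directly and then solving them via a diagonal rescaling that reduces the system to a Chebyshev-type recurrence with a quasi-periodic boundary condition.

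I first translate the eigenvector condition into local equations. Since each Dehn twist $T_{c_i}$ affects only the $i$-th coordinate via $v_i \mapsto v_i + v_{i-1} + v_{i+1}$, an eigenvector $v$ of $\rho(w)$ with eigenvalue $\lambda$ must satisfy, at the moment $T_{c_i}$ is applied,
\[
(\lambda - 1)\, v_i = \epsilon_{i, i-1}\, v_{i-1} + \epsilon_{i, i+1}\, v_{i+1},
\]
where $\epsilon_{i,j} = \lambda$ if $T_{c_j}$ was applied before $T_{c_i}$ (so that the current value of $v_j$ has already been scaled by $\lambda$) and $\epsilon_{i, j} = 1$ otherwise. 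In particular, for each edge $\{i, j\}$ of the cycle, exactly one of $\epsilon_{i, j}$ and $\epsilon_{j, i}$ equals $\lambda$, depending on the edge's orientation.

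Next I introduce a diagonal rescaling $u_i = v_i / \alpha_i$ with $\alpha_{i+1} = (\sqrt{\lambda}/\epsilon_{i, i+1})\, \alpha_i$, designed so that the local equations become the symmetric recurrence
\[
u_{i-1} + u_{i+1} = \mu\, u_i, \qquad \mu = \sqrt{\lambda} - \tfrac{1}{\sqrt{\lambda}}.
\]
The rescaling has nontrivial monodromy around the cycle: since $\epsilon_{i, i+1}$ equals $\lambda$ on the $(l-d)/2$ backward edges and $1$ on the $(l+d)/2$ forward edges, the cumulative product is $\prod_{i=1}^l (\sqrt{\lambda}/\epsilon_{i, i+1}) = \lambda^{l/2}/\lambda^{(l-d)/2} = \lambda^{d/2}$, which forces the $u_i$ to satisfy the quasi-periodic boundary condition $u_{i+l} = \lambda^{-d/2}\, u_i$.

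The general solution of the recurrence is $u_i = A r^i + B r^{-i}$ where $r + r^{-1} = \mu$, and the quasi-periodic condition forces $r^l = \lambda^{d/2}$ (up to swapping $r$ with $r^{-1}$). Writing $s = \lambda^{1/(2l)}$ so that $r = s^d$ and $\mu = s^l - s^{-l}$, this becomes $s^d + s^{-d} = s^l - s^{-l}$; multiplying by $s^l$ and rearranging yields $s^{2l} - s^{l+d} - s^{l-d} - 1 = 0$, which with $t = s^{2l} = \lambda$ is precisely the equation $t - t^{(l+d)/(2l)} - t^{(l-d)/(2l)} - 1 = 0$ defining $f(d, l)$. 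Since the Perron--Frobenius eigenvalue is a real, positive number greater than $1$, and since Proposition~\ref{function} ensures that this equation has a unique root exceeding $1$, the two must coincide. The main obstacle is the bookkeeping of the rescaling factors around the cycle: one must verify that the monodromy produces exactly $\lambda^{d/2}$ in terms of the flow difference, and check that the positive real branch of the multi-valued relation $r^{2l} = \lambda^d$ is the one realizing the Perron--Frobenius eigenvector.
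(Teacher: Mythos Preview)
Your approach is correct and genuinely different from the paper's.

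The paper does not analyse the eigenvector equations directly. Instead it introduces \emph{twist-and-click} examples~$\phi_{l,c}$ (a single twist followed by a rotation of the symmetric surface), shows via a companion-matrix computation (Lemma~\ref{companion}) that the dilatation of~$\phi_{l,c}$ is the largest real root of~$t^l - t^{l-a} - t^a - 1$, and computes the flow difference of~$\phi_{l,c}^l$ to be~$l-2a$ (Lemma~\ref{flowdiff}). This handles all pairs~$(d,l)$ with suitable coprimality conditions; the remaining pairs are reduced to these by covering arguments and the $0$-homogeneity of~$f$, with separate bookkeeping for odd and even~$l$.

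Your route bypasses all of this: the diagonal rescaling turns the cycle eigenvector system into a constant-coefficient three-term recurrence whose only data is the monodromy~$\lambda^{d/2}$, so the dependence on~$d/l$ alone is visible from the outset and no parity or coprimality case split is needed. What the paper's approach buys, on the other hand, is a supply of explicit low-complexity maps (the twist-and-click roots) realising each flow difference, which are of independent interest and reappear elsewhere in the paper.

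One point you flag but leave implicit deserves a sentence of care: to pin down the Perron--Frobenius eigenvalue among all eigenvalues satisfying~$r^l=\lambda^{\pm d/2}$, the cleanest argument is to go in the forward direction. Take~$\lambda=f(d,l)>1$, set~$r=\lambda^{|d|/(2l)}>0$, let~$u_i=r^{\mp i}$, and push back through the rescaling to obtain a strictly positive vector~$v$ with~$\rho(w)v=\lambda v$; since a primitive nonnegative matrix has a positive eigenvector only for its Perron--Frobenius eigenvalue, this forces~$\lambda_{PF}=f(d,l)$. This avoids having to argue that a general positive quasi-periodic solution of the recurrence must be a pure exponential (which requires knowing~$\mu>2$, i.e.\ $\lambda>3+2\sqrt{2}$, a fact you have not yet established at this point).
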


\begin{proof}[Proof of Proposition~\ref{function}]
Notice that~$$t-t^{\frac{y+x}{2y}}-t^{\frac{y-x}{2y}}-1=t-t^{\frac{1}{2}+\frac{x}{y}}-t^{\frac{1}{2}-\frac{x}{y}}-1=t-t^{\frac{1}{2}+|\frac{x}{y}|}-t^{\frac{1}{2}-|\frac{x}{y}|}-1.$$
This proves~(1), assuming that~$f$ is well-defined. Define~$$h(t,s)=t-t^{\frac{1}{2}+s}-t^{\frac{1}{2}-s}-1$$ for~$0<s<\frac{1}{2}$.
For every~$s$,~$h(1,s)=-2$. Furthermore,~$\partial_th(t,s)>0$ for all~$t>1$. It follows that for any fixed~$s$, the function~$h(\cdot,s)$ has exactly one real zero~$>1$.
This shows that~$f$ is well-defined. Furthermore, $\partial_th(t,s)$ depends continuously on~$s$, therefore so does the real zero~$>1$ of the function~$h(\cdot,s)$.
This proves the first part of~(2). In order to see the second part of~(2), notice that~$\partial_sh(t,s)<0$.
This implies that the real zero~$>1$ of the function~$h(\cdot,s)$ is strictly increasing in~$s$.
\end{proof}

\begin{ex}[Twist and click homeomorphisms]
\label{twistandclick}
\emph{
Let~$l,c\in\N$ be natural numbers such that~$c<l$ and~$\mathrm{gcd}(c,l)=1$.
Let~$\Sigma_l$ be the surface obtained by thickening a collection of~$l$ curves with the~$l$-cycle as their intersection graph, so that between any two intersections there is a half-twist.
This is depicted for~$l=5$ in Figure~\ref{nonorientable_examples} on the left.
Consider the mapping class~$\phi_{l,c}$ obtained by a Dehn twist along one of the curves composed with a~$c$-fold click, that is,
a rotation of the the symmetric surface~$\Sigma_l$ by an angle~$c\cdot\frac{2\pi}{l}$.
The~$l$-th power of such a mapping class~$\phi_{l,c}$ arises from Penner's construction using the core curves of the annuli with the~$l$-cycle as their intersection graph,
and every curve gets twisted along exactly once.
Since~$\phi_{l,c}^l$ is pseudo-Anosov by Penner's construction, so is~$\phi_{l,c}$ by the classification of surface homeomorphisms and the dilatation of~$\phi_{l,c}$ is the~$l$-th
root of the dilatation of~$\phi_{l,c}^l$.
}\end{ex}

The following lemma describes the dilatation of the twist and click mapping classes introduced in Example~\ref{twistandclick}.
For~$c=2$, the result is also stated by the authors in~\cite{LiSt}. In this case the absolute value of the flow difference is~$1$.
The proofs are basically identical.

\begin{lem}
\label{companion}
Let~$a$ be the smallest natural number such that~$ac\equiv 1(\mathrm{mod}\ l)$.
Then, the dilatation of~$\phi_{l,c}$ is given by the largest real root of the polynomial~$t^l -t^{l-a}-t^{a}-1$.
\end{lem}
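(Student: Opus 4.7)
The plan is to write $\phi_{l,c}^l$ as an explicit word in the Dehn twists $T_{c_i}$, read off the flow difference of the induced acyclic orientation of the cycle $C_l$, and invoke Proposition~\ref{specialisation}. With $\phi_{l,c}=R\circ T_{c_0}$ and $R(c_j)=c_{j+c}$ (so that $R^l=\mathrm{id}$), the conjugation identity $RT_c=T_{R(c)}R$ unfolds the $l$-th power as
$$\phi_{l,c}^l=T_{R(c_0)}T_{R^2(c_0)}\cdots T_{R^l(c_0)}=T_{c_c}T_{c_{2c}}\cdots T_{c_{(l-1)c}}T_{c_0},$$
indices modulo $l$. Because $\gcd(c,l)=1$, each curve is twisted exactly once, so this word fits the framework of Section~\ref{cycle_section}.

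The next step is the flow difference. Let $t(c_j)\in\{1,\ldots,l\}$ denote the position of $T_{c_j}$ in the above word. Since $kc\equiv j\pmod l$ is equivalent to $k\equiv aj\pmod l$, we obtain $t(c_j)=aj\bmod l$ with residues in $\{1,\ldots,l\}$. On each edge $(c_j,c_{j+1})$ of $C_l$, the difference $t(c_{j+1})-t(c_j)$ equals $+a$ when $t(c_j)+a\leq l$ and $a-l$ otherwise, since the residues wrap once inside $\{1,\ldots,l\}$. As $j\mapsto t(c_j)$ is a bijection onto $\{1,\ldots,l\}$, exactly $l-a$ edges contribute $+a$ and exactly $a$ edges contribute $a-l$, giving a flow difference of $d=l-2a$.

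By Proposition~\ref{specialisation}, the dilatation of $\phi_{l,c}^l$ is the largest real root of $t-t^{(l-a)/l}-t^{a/l}-1=0$. Example~\ref{twistandclick} says the dilatation $\mu$ of $\phi_{l,c}$ is the $l$-th root of this number, so the substitution $t=\mu^l$ transforms the equation into $\mu^l-\mu^{l-a}-\mu^a-1=0$. Hence $\mu$ is a positive real root of $p(t)=t^l-t^{l-a}-t^a-1$. That $\mu$ is the largest real root of $p$ follows because $p(t)\leq -1$ on $(0,1]$ (as $t^l\leq t^{l-a}$ there), while any real root of $p$ greater than $\mu$ would produce, via the same substitution, a real root of the preceding equation strictly larger than $\mu^l>1$, contradicting the uniqueness of such a root established in the proof of Proposition~\ref{function}; dominance over negative real roots follows from Perron--Frobenius applied to the nonnegative matrix $\rho(\phi_{l,c}^l)$.

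The principal obstacle is the combinatorial count of the flow difference in the second step. Lemma~\ref{flowdiffconjugacy} permits us to ignore the sign of $d$, sparing a case distinction according to whether $a<l/2$ or $a>l/2$, and leaving a clean bijective argument on the residues.
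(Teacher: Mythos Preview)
Your argument is circular. In the paper's logical structure, Proposition~\ref{specialisation} is \emph{proved using} Lemma~\ref{companion} (together with Lemma~\ref{flowdiff}): the proof of Proposition~\ref{specialisation} reduces to the coprime case, then for each flow difference~$d$ finds a twist-and-click example~$\phi_{l,c}$ with that flow difference, and finally invokes Lemma~\ref{companion} to read off the dilatation. So when you write ``By Proposition~\ref{specialisation}, the dilatation of~$\phi_{l,c}^l$ is\ldots'', you are calling the very statement you are supposed to be proving. Your second step is likewise a reproof of Lemma~\ref{flowdiff}, which is fine on its own but does not escape the circularity, since the bridge from flow difference to dilatation is precisely what Lemma~\ref{companion} supplies.

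The paper's proof avoids this by computing the action of~$\phi_{l,c}$ on~$\mathrm{H}_1(\Sigma_l;\R)$ directly. With the basis~$c_1,\dots,c_l$ chosen so that the rotation sends~$c_i\mapsto c_{i-1}$, the rotation acts by a cyclic permutation matrix and the twist~$T_{c_1}$ acts by adding~$c_1$ to~$c_a$ and~$c_{l-a}$; the product is then visibly the companion matrix of~$t^l-t^{l-a}-t^a-1$, so its spectral radius (which equals the dilatation, since the homological action coincides with~$\rho(\phi_{l,c}^l)$ and hence the invariant foliations are orientable) is the largest real root of that polynomial. This is what you would need to replace your appeal to Proposition~\ref{specialisation} with. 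Once Lemma~\ref{companion} is established in this direct way, your flow-difference computation becomes the content of Lemma~\ref{flowdiff}, and only then does Proposition~\ref{specialisation} follow.
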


\begin{proof}
The mapping class~$\phi_{l,c}^l$ is pseudo-Anosov and arises from Penner's construction.
Furthermore, the associated matrix~$\rho(\phi_{l,c}^l)$ in Penner's construction equals the action on the first homology of the surface induced by~$\phi_{l,c}^l$.
To see this, choose the collection of the core curves of the annuli as a basis for the first homology, oriented invariantly under rotation.
From this it follows that~$\phi_{l,c}^l$ has an orientable invariant foliation, and hence so does~$\phi_{l,c}$.
In particular, the dilatation of~$\phi_{l,c}$ is given by the spectral radius of its action induced on the first homology of the surface,
which we describe now.
Number the core curves of the twisted bands in the following way. The first curve~$c_1$ is the one along which we do a Dehn twist in the definition of~$\phi_{l,c}$.
The second curve~$c_2$ is the image of~$c_1$ under rotation of~$\Sigma_l$ by an angle~$-c\cdot\frac{2\pi}{l}$.
The third curve~$c_3$ is the image of~$c_2$ under rotation of~$\Sigma_l$ by an angle~$-c\cdot\frac{2\pi}{l}$, and so on.
As a basis for the first homology~$\mathrm{H}_1(\Sigma_l;\R)$, we choose the homology classes of~$c_1, c_2, \dots, c_{l}$.
We obtain that the rotation~$r$ of~$\Sigma_l$ by an angle~$c\cdot\frac{2\pi}{l}$ acts by a permutation matrix, sending~$c_i$ to~$c_{i-1}$, where the indices are taken~$(\mathrm{mod}\ l)$.
Furthermore, the Dehn twist~$T_{c_1}$ acts as the identity on the curves~$c_i$ for~$i\ne a,l-a$,
and adds the curve~$c_1$ to the curves~$c_a$ and~$c_{l-a}$.
The product of these matrix actions is a companion matrix for the polynomial~$t^l -t^{l-a}-t^{a}-1$.
For example, for~$l=5$ and~$c=1$, we have~$a=1$ and
\begin{align*}
(T_{c_1})_\ast = \begin{pmatrix}
1 & 1 & 0 & 0 & 1\\
0 & 1 & 0 & 0 & 0\\
0 & 0 & 1 & 0 & 0\\
0 & 0 & 0 & 1 & 0\\
0 & 0 & 0 & 0 & 1
\end{pmatrix},
r_\ast &= \begin{pmatrix}
0 & 1 & 0 & 0 & 0\\
0 & 0 & 1 & 0 & 0\\
0 & 0 & 0 & 1 & 0\\
0 & 0 & 0 & 0 & 1\\
1 & 0 & 0 & 0 & 0
\end{pmatrix},\\
(\phi_{5,1})_\ast = r_\ast \cdot(T_{c_1})_\ast &= \begin{pmatrix}
0 & 1 & 0 & 0 & 0\\
0 & 0 & 1 & 0 & 0\\
0 & 0 & 0 & 1 & 0\\
0 & 0 & 0 & 0 & 1\\
1 & 1 & 0 & 0 & 1
\end{pmatrix},
\end{align*}
so~$(\phi_{5,1})_\ast$ has characteristic polynomial~$t^5-t^4-t-1$.
\end{proof}

\begin{lem}
\label{flowdiff}
Let~$a$ be the smallest natural number such that~$ac\equiv 1(\mathrm{mod}\ l)$. Then, the flow difference of~$\phi_{l,c}^l$ is~$l-2a$.
\end{lem}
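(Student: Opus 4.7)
The plan is to first rewrite $\phi_{l,c}^l$ as a product involving only Dehn twists and then count the clockwise and anticlockwise edges in the induced acyclic orientation of~$C_l$.

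The key conjugation identity is $r^{-1}T_{c_i}r = T_{r^{-1}(c_i)} = T_{c_{i+1}}$, where the second equality is the definition of~$c_{i+1}$. Combined with $r^l=\mathrm{id}$, an induction on~$k$ shows that $(rT_{c_1})^k = r^k\,T_{c_k}T_{c_{k-1}}\dots T_{c_2}T_{c_1}$; setting $k=l$ yields $\phi_{l,c}^l = T_{c_l}T_{c_{l-1}}\dots T_{c_2}T_{c_1}$. Since functional composition is read right-to-left, $T_{c_i}$ is applied at time~$i$, so in the induced acyclic orientation of the intersection cycle each edge is directed from its smaller-indexed endpoint (applied earlier) to its larger-indexed one.

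Next I would read off the cycle structure in terms of the indices~$i$. Since each application of~$r^{-1}$ shifts a curve by~$c$ positions along the geometric cyclic arrangement of the core curves, two curves $c_i$ and $c_{i'}$ intersect precisely when $i'-i\equiv\pm a\,(\mathrm{mod}\ l)$, using $ac\equiv 1\,(\mathrm{mod}\ l)$. Writing the $l$ edges of~$C_l$ as $e_k=\{c_k,\,c_{k+a\bmod l}\}$ for $k=1,\dots,l$, there are $l-a$ values of~$k$ with $k+a\le l$, for which $e_k$ is oriented $c_k\to c_{k+a}$, and $a$ values with $k+a>l$, for which it is oriented $c_{k+a-l}\to c_k$. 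A clockwise step around the geometric cycle corresponds to the action of~$r^{-a}$, which sends~$c_k$ to~$c_{k+a\bmod l}$; the first kind of edge is thus oriented clockwise and the second anticlockwise, giving a flow difference of $(l-a)-a = l-2a$.

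The only real bookkeeping hazard is the sign: the count of ascents and descents in the cyclic sequence $1,1+a,1+2a,\dots\,(\mathrm{mod}\ l)$ immediately gives the magnitude $|l-2a|$, but matching the direction of rotation of~$r$ with the clockwise convention on~$C_l$ is what is needed to pin down the sign as~$+(l-2a)$.
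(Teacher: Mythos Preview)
Your argument is correct. The approach is dual to the paper's: the paper keeps the physical labelling of the core curves (so adjacency is $k\leftrightarrow k+1$) and records the twist order as the arithmetic progression $0,c,2c,\dots$; you instead first derive the explicit formula $\phi_{l,c}^l=T_{c_l}\cdots T_{c_1}$ via the conjugation identity, so that the curves are labelled by twist time and adjacency on the cycle becomes $k\leftrightarrow k+a$. Counting ascents versus descents in the sequence $1,1+a,1+2a,\dots\pmod l$ then gives $l-a$ edges one way and $a$ the other, uniformly. This buys you a cleaner argument with no case split: the paper has to treat separately the situation where $l-a<a$ (equivalently, where some $a'<a$ satisfies $a'c\equiv -1$), because in that case the first $a$ curves in the sequence $0,c,2c,\dots$ are no longer pairwise nonadjacent. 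Your identification of the clockwise direction with $r^{-a}$ is correct, since $r^{-a}$ is rotation by $-ac\cdot\tfrac{2\pi}{l}\equiv -\tfrac{2\pi}{l}$, a single clockwise step; this is the bookkeeping you flag, and it goes through.
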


\begin{proof}
We identify the elements of~$\Z/l\Z$ with the vertices of the cycle~$C_l$ of length~$l$.
We consider the sequence of residues~$0,c,2c,\dots, (l-1)c \in\Z/l\Z$, which is the sequence in which~$\phi_{l,c}^l$ twists along the curves (corresponding to elements of~$\Z/l\Z$).
In order to determine the flow difference of~$\phi_{l,c}^l$, it suffices to know for each pair of adjacent elements~$k,k+1\in\Z/l\Z$ which element appears first in the sequence.
Indeed, if~$k\in\Z/l\Z$ appears first in the sequence, then the edge connecting the~$k$th and the~$k+1$st vertex is oriented towards the~$k+1$st vertex, and vice versa.

Assume for a moment that~$a$ is minimal so that~$ac\equiv \pm1(\mathrm{mod}\ l)$.
Then, the residue classes~$0,c, 2c, \dots, (a-1)c\in\Z/l\Z$ are pairwise nonadjacent.
Now, the next residue class in the sequence is~$ac\equiv1\equiv0+1(\mathrm{mod}\ l)$. Each element that occurs in the sequence after~$ac$ can also be obtained by adding~$1$ to an element that occurred
already before in the sequence. We deduce that we obtain~$l-a$ edges pointing in the clockwise direction and~$a$ edges pointing in the anticlockwise direction.
This yields a flow difference of~$(l-a)-a=l-2a$.

If~$a$ is not the minimal natural number so that~$ac\equiv \pm1(\mathrm{mod}\ l)$, then we have~$l-a<a$ and~$(l-a)c\equiv -1(\mathrm{mod}\ l)$.
We can repeat the same argument, but the direction of each edge is switched. We obtain~$l-(l-a)$ edges pointing in the anticlockwise direction and~$l-a$ edges pointing in the clockwise direction.
This yields a flow difference of~$l-a-(l-(l-a))=l-2a$.
\end{proof}

\begin{proof}[Proof of Proposition~\ref{specialisation}]

Assume for a moment that~$l$ is odd. We first reduce to the case~$\mathrm{gcd}(d,l)=1$. For this, assume for a moment $\mathrm{gcd}(d,l)>1$. We have
$$f(d,l)=f(\frac{d}{\mathrm{gcd}(d,l)}, \frac{l}{\mathrm{gcd}(d,l)})$$ by~$0$-homogeneity of~$f$.
Note that a Penner mapping class of flow difference~$\frac{d}{\mathrm{gcd}(d,l)}$ on the cycle of length~$\frac{l}{\mathrm{gcd}(d,l)}$ is
covered~$\mathrm{gcd}(d,l)$-fold by a Penner mapping class with flow difference~$d$ on the cycle of length~$l$.
It therefore suffices to prove the statement for~$\mathrm{gcd}(d,l)=1$.

In the twist and click mapping classes for a fixed odd length~$l$, as~$c$ runs through the numbers smaller than~$l$ with~$\mathrm{gcd}(c,l)=1$,
also the corresponding~$a$ runs through the numbers smaller than~$l$ with~$\mathrm{gcd}(a,l)=1$. Therefore, the numbers~$l-2a$ run through the odd numbers of absolute value smaller than~$l$
with~$\mathrm{gcd}(l-2a,l)=1$. In particular, we obtain every flow difference~$d$ with~$\mathrm{gcd}(d,l)=1$ as an~$l$-th power of a twist and click example.
By Lemma~\ref{companion} and~\ref{flowdiff}, the dilatation of the Penner mapping classes with flow difference~$d$ on the cycle of length~$l$ is the~$l$-th power of the largest real root of the polynomial
~$t^l -t^{l-a}-t^{a}-1$, where~$a=(l-d)/2$. Equivalently, the dilatation equals the largest real solution of the equation
$$t-t^{\frac{l+d}{2l}}-t^{\frac{l-d}{2l}}-1=0,$$
which finishes the proof in the case where~$l$ is odd.

Now, let~$l$ be even. The proof of this case is similar, the main difficulty being that by dividing both~$l$ and~$d$ by~$\mathrm{gcd}(d,l)$, it is possible to break the condition~$d\equiv l(\mathrm{mod}\ 2)$.
This time, we reduce our argument to the case~$\mathrm{gcd}(d,l)=2$ and~$d\not\equiv l(\mathrm{mod}\ 4)$.
Indeed, this is exactly the case where the condition~$d\equiv l(\mathrm{mod}\ 2)$ does not hold anymore after
dividing both~$l$ and~$d$ by~$2$. Notice that any other case either reduces to this one or a case where~$l$ is odd, by a covering argument as above.
It therefore suffices to prove the statement for~$\mathrm{gcd}(d,l)=2$ and~$d\not\equiv l(\mathrm{mod}\ 4)$.

As in the argument for odd~$l$, we again use the twist and click mapping classes from Example~\ref{twistandclick}.
The only difference is that in this case, the numbers~$l-2a$ run through the even numbers of absolute value smaller than~$l$
with~$\mathrm{gcd}(l-2a,l)=2$ and~$l-2a\not\equiv l(\mathrm{mod}\ 4)$. Indeed, since~$a$ is odd, we obtain~$l-2a\not\equiv l(\mathrm{mod}\ 4)$.
On the other hand, since we get all~$a$ with~$\mathrm{gcd}(a,l)=1$ by varying~$c$ with~$\mathrm{gcd}(c,l)=1$,
we obtain all flow differences~$d$ with~$\mathrm{gcd}(d,l)=2$ and~$d\not\equiv l(\mathrm{mod}\ 4)$ by an~$l$-th power of a twist and click example.
Here, we have again used~Lemma~\ref{flowdiff} to argue that~$d=l-2a$.
As in the case of odd~$l$, the statement follows from Lemma~\ref{companion}.
\end{proof}

\begin{remark}\emph{
For odd~$l$ and~$c=2$, the twist and click mapping class~$\phi_{l,c}$ conjecturally minimises the dilatation
among pseudo-Anosov mapping classes with an orientable invariant foliation on the nonorientable closed surface of genus~$l+1$.
This has been shown for even genus up to~$20$ by the authors~\cite{LiSt}.
Adding the mapping classes~$\phi_{l,c}$ for other~$c$ to the picture as in Proposition~\ref{function} exhibits a strong similarity with theory of the normalised dilatation
on a fibred face of the Thurston norm ball~\cite{Fried79, Fried82}. Indeed, we expect many of the mapping classes~$\phi_{l,c}$ to lie in a common fibred cone.
}\end{remark}

\section{Even genus minimal dilatations}
\label{evengenus_section}

The goal of this section is to single out the minimal dilatation examples among mapping classes arising from Penner's construction on a closed nonorientable surface of even genus.
We will often use the following lemma to obtain lower bounds for the dilatation of mapping classes arising from Penner's construction.
It was implicitly used already in the case of orientable surfaces by the first author~\cite{Li}.

\begin{lem}
\label{treebound}
Let~$\phi$ be a mapping class arising from Penner's construction using a collection of curves~$\{c_i\}$.
If the intersection graph of the curves~$\{c_i\}$ contains a tree~$\Gamma$ (possibly with multiple edges between two vertices) as a subgraph, then
$$\lambda(\phi)\ge\frac{2+\alpha^2+\sqrt{4\alpha^2+\alpha^4}}{2},$$
where~$\alpha$ is the largest eigenvalue of the adjacency matrix of~$\Gamma$.
\end{lem}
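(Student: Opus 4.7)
The plan is to peel away every Dehn twist and curve not involved in~$\Gamma$, and then to reduce the lower bound to an explicit spectral computation on the bipartite structure underlying the tree. Throughout, I use Perron--Frobenius monotonicity: for nonnegative matrices~$M\le N$ entrywise one has~$\lambda(M)\le\lambda(N)$.

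First I would prune factors. Each~$R_{c_i}$ has nonnegative entries, so left-multiplying a nonnegative matrix by~$I+R_{c_j}$ only increases its entries; dropping factors from the word representing~$\rho(\phi)$ can therefore only decrease entries. Let~$V_\Gamma$ denote the vertex set of~$\Gamma$, viewed as a subset of~$\{c_i\}$, and let~$\psi$ be the product of the twists~$T_{c_i}$ for~$c_i\in V_\Gamma$ taken in the order they first appear in~$\phi$. Then~$\rho(\psi)\le\rho(\phi)$ entrywise, and~$\lambda(\psi)\le\lambda(\phi)$. Because~$R_{c_j}$ has only its~$j$-th row nonzero, the rows of~$\rho(\psi)$ indexed by~$c_i\notin V_\Gamma$ remain identity rows. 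Permuting the basis so that~$V_\Gamma$ comes first,
$$\rho(\psi)=\begin{pmatrix}A & \ast\\ 0 & I\end{pmatrix},$$
with Perron eigenvalue~$\lambda(A)$. By inspection~$A=\prod_{c_i\in V_\Gamma}(I+\widetilde R_{c_i})$, where~$\widetilde R_{c_i}$ has~$i$-th row equal to the geometric intersection numbers of~$c_i$ with the remaining curves in~$V_\Gamma$, and zeros elsewhere. Since~$\Gamma$ is a subgraph of the induced intersection graph on~$V_\Gamma$, we have~$\widetilde R_{c_i}\ge R_{c_i}^\Gamma$ entrywise, where~$R_{c_i}^\Gamma$ has~$i$-th row equal to the~$i$-th row of~$A_\Gamma$. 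Thus~$A\ge M$ entrywise for~$M=\prod_{c_i\in V_\Gamma}(I+R_{c_i}^\Gamma)$, and it suffices to prove~$\lambda(M)=\tfrac12(2+\alpha^2+\sqrt{4\alpha^2+\alpha^4})$.

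Because~$\Gamma$ is a tree it has no cycles, so any two acyclic orientations of~$\Gamma$ are related by source-to-sink operations. Shi's correspondence, used exactly as in the proof of Lemma~\ref{flowdiffconjugacy}, then yields that all orderings of the factors of~$M$ give conjugate matrices. I may therefore perform the twists along one side of the bipartition~$V_\Gamma=V_+\sqcup V_-$ first, and those along the other side second. Since~$R_{c_i}^\Gamma R_{c_j}^\Gamma=0$ whenever~$c_i,c_j$ lie on the same side (they are not adjacent in~$\Gamma$), each half collapses to~$I+R_\pm$, where~$R_\pm=\sum_{c_i\in V_\pm}R_{c_i}^\Gamma$. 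Letting~$B$ be the~$|V_+|\times|V_-|$ bipartite adjacency submatrix of~$A_\Gamma$, we obtain
$$M=(I+R_-)(I+R_+)=\begin{pmatrix}I & B\\ B^T & I+B^TB\end{pmatrix}.$$
The eigenvalue equation~$M\binom{x}{y}=t\binom{x}{y}$, after eliminating~$x$ from the first block row, reduces to~$B^TB\,y=\tfrac{(t-1)^2}{t}y$, so~$t$ is an eigenvalue of~$M$ precisely when~$(t-1)^2/t$ is an eigenvalue of~$B^TB$. The largest eigenvalue of~$B^TB$ equals~$\alpha^2$, so~$\lambda(M)$ is the largest root of~$t^2-(2+\alpha^2)t+1=0$, namely the claimed quantity.

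The hard part will be matching up the Shi--Pretzel ordering-independence on the tree with the bipartite block identity for~$M$ so that the scalar quadratic~$t^2-(2+\alpha^2)t+1=0$ relating the Perron eigenvalues of~$M$ and of~$A_\Gamma$ drops out directly. The remaining steps are routine monotonicity and block-triangular bookkeeping with nonnegative matrices.
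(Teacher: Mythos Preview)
Your proof is correct and follows essentially the same strategy as the paper: reduce by monotonicity to the subgraph~$\Gamma$, use order-independence on the tree to pass to a bipartite twist order, and then read off the quadratic relation $\lambda+\lambda^{-1}-2=\alpha^2$. The only real difference is packaging: the paper outsources the order-independence to Steinberg and the quadratic relation to Thurston's construction, whereas you rederive both---the first via the source-to-sink argument (which is valid since on a tree every orientation is acyclic and all are source-to-sink equivalent), the second via the explicit block form $M=\begin{pmatrix}I&B\\B^T&I+B^TB\end{pmatrix}$ and the elimination yielding $B^TBy=\tfrac{(t-1)^2}{t}y$. Your route is more self-contained; the paper's is shorter but leans on two external results.
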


\begin{proof}
Let~$\phi$ be a mapping class arising from Penner's construction using a collection of curves~$\{c_i\}$,
and let the tree~$\Gamma$ be a subgraph of the intersection graph of the curves~$\{c_i\}$.
For two matrices~$A$ and~$B$ of the same dimensions, we write~$A\le B$ if~$a_{ij}\le b_{ij}$ for all~$i,j$.
Recall that the spectral radius of nonnegative matrices is monotonic under~``$\le$'', see, for example,~\cite{BrHa}.
We may therefore assume that~$\phi$ is a product of Dehn twists~$T_{c_i}$ so that every curve~$c_i$ gets twisted along exactly once.
Let~$\phi_\Gamma$ be the subproduct of Dehn twists~$T_{c_i}$ along exactly those curves~$c_i$ which correspond to the vertices of~$\Gamma$.
We have that~$\lambda(\phi)$ is an upper bound for the spectral radius of~$\rho(\phi_\Gamma)$.
The spectral radius of~$\rho(\phi_\Gamma)$ is in turn an upper bound for the Penner dilatation $\lambda(\Gamma)$ associated with the subgraph~$\Gamma$ and its induced order of twisting.
Note that by a result of Steinberg, the order of twisting does not change the conjugacy class, since~$\Gamma$ is a tree~\cite{Steinberg}.
It follows that~$\lambda(\Gamma)$ is independent of the Dehn twist product order on~$\Gamma$.
In particular, we may calculate~$\lambda(\Gamma)$ as the dilatation of a product of two multitwists, in which case Thurston's construction yields
$$\lambda(\Gamma)+\lambda(\Gamma)^{-1} -2 =\alpha^2,$$
where~$\alpha$ is the largest eigenvalue of the adjacency matrix of~$\Gamma$, see~\cite{Th}.
Solving this equation for~$\lambda(\Gamma)$ yields the result.
\end{proof}

Let~$\varphi_l$ be the mapping class defined by the~$l$th power of the twist-and-click mapping class~$\phi_{l,2}$, where~$l$ is an odd natural number.
By Lemma~\ref{flowdiff}, if~$c=2$, then~$a=\frac{l+1}{2}$ and the absolute value of the flow difference associated with~$\varphi_l$ equals~$|l-2a|=1$.
Both Lemma~\ref{fromabove} and Lemma~\ref{bestflow} follow readily from Propositions~\ref{function} and~\ref{specialisation}.

\begin{lem}
\label{bestflow}
Among pseudo-Anosov mapping classes arising from Penner's construction using curves with an odd~$l$-cycle as their intersection graph,
the mapping class~$\varphi_l$ has minimal dilatation.
\end{lem}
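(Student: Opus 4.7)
The plan is to combine Propositions~\ref{function} and~\ref{specialisation} with a monotonicity argument, reducing to the case where each twist appears exactly once and then minimising the flow difference.

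First I would argue that among Penner mapping classes on the cycle $C_l$, a minimiser of the dilatation uses each $T_{c_i}$ exactly once. If $\phi$ is written as a product of Dehn twists in which some $T_{c_i}$ appears more than once, let $\phi'$ be obtained by deleting extra occurrences so that every $T_{c_i}$ still appears at least once (so $\phi'$ is again Penner pseudo-Anosov). Using $\rho(T_{c_i}) = I + R_{c_i}$ with $R_{c_i}$ entrywise nonnegative and expanding the product, one sees that $\rho(\phi) - \rho(\phi')$ is a sum of products of nonnegative matrices, hence entrywise nonnegative. By monotonicity of the Perron--Frobenius eigenvalue on nonnegative matrices (cited already in the proof of Lemma~\ref{treebound}), $\lambda(\phi) \ge \lambda(\phi')$.

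Next I would apply the two propositions to mapping classes in which each $T_{c_i}$ appears exactly once. By Lemma~\ref{flowdiffconjugacy}, the dilatation depends only on the flow difference $d$ of the induced acyclic orientation of $C_l$, and by Proposition~\ref{specialisation} it equals $f(d,l)$. Proposition~\ref{function} then tells us that $f(d,l)$ depends only on $|d|/l$ and is strictly increasing in this quantity, so the problem reduces to minimising $|d|$ among admissible flow differences on $C_l$.

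Finally I would identify the minimiser. Since $C_l$ has $l$ edges and $d$ equals the number of clockwise edges minus the number of anticlockwise ones, $d \equiv l \pmod{2}$, and for odd $l$ this forces $|d| \ge 1$. By Lemma~\ref{flowdiff}, taking $c=2$ gives $a = (l+1)/2$ and therefore flow difference $l - 2a = -1$, so $\varphi_l = \phi_{l,2}^l$ realises $|d| = 1$ and hence the minimum of $f(\,\cdot\,,l)$. The only step requiring any real care is the initial reduction to products in which each twist appears exactly once, since the matrices $\rho(T_{c_i})$ do not commute; everything else is a direct consequence of the two propositions.
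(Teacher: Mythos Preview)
Your proof is correct and follows essentially the same route as the paper: apply Proposition~\ref{specialisation} to express the dilatation as $f(d,l)$, then use Proposition~\ref{function} to see it is strictly increasing in $|d|/l$, and conclude by noting the minimal admissible $|d|$ on an odd cycle is $1$, realised by $\varphi_l$. The one place you are more careful than the paper is the explicit reduction to products in which each twist occurs exactly once; the paper treats this reduction as understood from the surrounding context (it is invoked in the proofs of Lemma~\ref{treebound} and Theorem~\ref{evengenus}), whereas you spell out the entrywise comparison $\rho(\phi)\ge\rho(\phi')$ via $\rho(T_{c_i})=I+R_{c_i}$ and the monotonicity of the spectral radius.
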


\begin{proof}
By Proposition~\ref{specialisation}, the dilatation of the Penner mapping classes with flow difference~$d$ on the cycle of length~$l$ equals~$f(d,l)$.
By Proposition~\ref{function}, the function~$f(d,l)$ is strictly increasing in~$|\frac{d}{l}|=\frac{|d|}{l}$. This means that for a cycle of fixed length~$l$, the
dilatation is a strictly increasing function of the absolute value of the flow difference. In particular, the dilatation is minimised for the minimal absolute value of
the flow difference, which for a cycle of odd length~$l$ is~$1$.
\end{proof}

\begin{lem}
\label{fromabove}
We have~$\lambda(\varphi_{l+2})<\lambda(\varphi_l)$.
\end{lem}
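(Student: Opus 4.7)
The plan is to deduce the lemma directly from Propositions~\ref{function} and~\ref{specialisation}, which is why the paper signals that it ``follows readily'' from them. The key observation is that $\lambda(\varphi_l) = f(1,l)$ for every odd~$l \ge 3$; the monotonicity then reduces to a property of the function~$f$.

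First I would verify the hypotheses of Proposition~\ref{specialisation} for the pair $(1,l)$. For any odd $l\ge 3$ we have $(1,l)\in \Z^2 \cap C$, since $l>0$ and $|1|<l$, and we have $1 \equiv l \pmod 2$ because both are odd. By the discussion just before Lemma~\ref{bestflow}, the mapping class $\varphi_l = \phi_{l,2}^l$ has associated flow difference of absolute value~$1$ (take $c=2$ in Lemma~\ref{flowdiff}, so that $a=(l+1)/2$ and $|l-2a|=1$). Therefore Proposition~\ref{specialisation}, applied with $d=1$, gives
\[
\lambda(\varphi_l) = f(1,l)
\]
for every odd $l\ge 3$.

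Next I would invoke Proposition~\ref{function}. By part~(1), $f(x,y)$ depends only on $|x/y|$, so $f(1,l)$ is a function of $1/l$ alone. By part~(2), this function is strictly increasing in $|x/y|$. Since $l$ is a positive odd integer and $1/(l+2) < 1/l$, it follows that
\[
\lambda(\varphi_{l+2}) = f(1, l+2) < f(1, l) = \lambda(\varphi_l),
\]
which is the desired inequality.

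There is no real obstacle: the content is fully carried by the two cited propositions. The only subtlety worth checking is that the hypothesis $d \equiv l \pmod 2$ of Proposition~\ref{specialisation} is satisfied at every step of the sequence, but this is automatic here because $d=1$ and $l$ runs through odd values only. No additional estimate or covering argument is required.
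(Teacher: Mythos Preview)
Your proof is correct and follows essentially the same route as the paper: identify $\lambda(\varphi_l)=f(1,l)$ via Proposition~\ref{specialisation}, then use the strict monotonicity of $f$ in $|x/y|$ from Proposition~\ref{function} to conclude. The only difference is that you spell out the verification of the hypotheses $(1,l)\in\Z^2\cap C$ and $1\equiv l\pmod 2$ more explicitly than the paper does.
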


\begin{proof}
By Proposition~\ref{specialisation}, we have that the dilatation of~$\lambda(\varphi_{j})$ is~$f(1,j)$.
By Proposition~\ref{function}, the function~$f(1,j)$ is strictly increasing in~$|\frac{1}{j}|=\frac{1}{j}$ and hence strictly decreasing in~$j$.
\end{proof}

We are now ready to describe the Penner mapping classes of minimal dilatation on nonorientable closed surfaces of even genus.
Note that we only have to consider nonorientable surfaces of genus at least four since the mapping class group of the Klein bottle is finite and thus does not contain
pseudo-Anosov elements.

\begin{thm}
\label{evengenus}
The mapping class~$\varphi_l$ has the minimal dilatation among pseudo-Anosov mapping classes arising from Penner's construction for a nonorientable closed surface of even genus~$g=l+1$.
\end{thm}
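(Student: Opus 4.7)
The plan is to show that every pseudo-Anosov $\phi$ arising from Penner's construction on $N_{l+1}$ satisfies $\lambda(\phi)\ge\lambda(\varphi_l)$. Since Example~\ref{cycle_example} realises $\varphi_l=\phi_{l,2}^{\,l}$ as a Penner map on $N_{l+1}$ (note that $g=l+1$ is even precisely because $l$ is odd), this inequality gives the theorem.

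Let $\phi$ be a Penner pseudo-Anosov on $N_{l+1}$ built from a collection $\{c_i\}$ with intersection graph $\Gamma$. Spectral radius monotonicity for nonnegative matrices, as used in the proof of Lemma~\ref{treebound}, reduces us to the case in which each $c_i$ is twisted along exactly once. By Lemma~\ref{notbipartite}, $\Gamma$ is not bipartite, so it contains an odd cycle, and therefore a shortest odd cycle, which is necessarily an \emph{induced} subgraph; call its length $l'\ge 3$. Applying Lemma~\ref{genusbound} to this induced odd cycle yields $l+1\ge l'+1$, that is, $l'\le l$.

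The second half of the argument is a restriction to the subcycle in the style of Lemma~\ref{treebound}. Let $\phi_{l'}$ be the subproduct of Dehn twists along the $l'$ curves of the induced cycle in the order inherited from $\phi$, so that $\rho(\phi)$ equals $\rho(\phi_{l'})$ multiplied by extra nonnegative factors $I+R_{c_j}\ge I$; entrywise spectral radius monotonicity then gives that $\lambda(\phi)$ is at least the spectral radius of $\rho(\phi_{l'})$. Reordering the basis to place the cycle curves first, each matrix $I+R_{c_i}$ with $i$ on the cycle becomes block upper-triangular with identity in the lower-right block, and the induced property ensures that its upper-left block reproduces exactly the Penner row-matrix of $c_i$ on $C_{l'}$; consequently $\rho(\phi_{l'})$ is block upper-triangular and its spectral radius equals that of its upper-left block, which is the Penner matrix of a product of Dehn twists on the cycle $C_{l'}$. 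This spectral radius is at least $\lambda(\varphi_{l'})$ by Lemma~\ref{bestflow}, and iterating Lemma~\ref{fromabove} along the chain $l', l'+2, \ldots, l$ yields $\lambda(\varphi_{l'})\ge\lambda(\varphi_l)$, closing the chain. The main obstacle is this restriction step: one must verify both that the extra twists only enlarge the spectral radius and that the block-triangular reduction faithfully recovers the $C_{l'}$-Penner matrix on its upper-left block, which is precisely where the inducedness of the odd cycle is used to prevent intersections with curves outside the cycle from polluting this block.
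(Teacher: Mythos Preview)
Your argument follows the paper's approach and cleanly unifies its Cases~2 and~3 through the shortest odd cycle. There is, however, a gap corresponding exactly to the paper's Case~1: the intersection graph is a \emph{multigraph} (edge multiplicity equals geometric intersection number), and your claim that the shortest odd cycle is ``necessarily an induced subgraph'' in the sense required by Lemma~\ref{genusbound} can fail when two curves on that cycle meet more than once. A chord between non-adjacent cycle vertices would indeed produce a shorter odd cycle, but a parallel edge does not; so the induced subgraph on the vertices of a shortest odd cycle may carry a double edge. In that situation Lemma~\ref{genusbound} does not apply as stated (its proof, via Lemma~\ref{fillingcycle}, genuinely uses that consecutive curves intersect in a single point), and your upper-left block is not the Penner matrix of the simple cycle~$C_{l'}$ either.

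The paper disposes of this case first: if some pair of curves intersects at least twice, then together with non-bipartiteness one finds the tree with one double and one simple edge (Figure~\ref{9subgraph}, left) as a subgraph, and Lemma~\ref{treebound} gives $\lambda(\phi)\ge(7+3\sqrt{5})/2\approx 6.854>\lambda(\varphi_l)$ for every odd~$l$. Once multi-edges are excluded in this way, your streamlined argument---shortest odd cycle is chordless hence induced, Lemma~\ref{genusbound} gives $l'\le l$, block-triangular restriction plus Lemmas~\ref{bestflow} and~\ref{fromabove} finish---goes through and is essentially the paper's proof with Cases~2 and~3 merged.
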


\begin{proof}
Let $N_{l+1}$ be the nonorientable closed surface of even genus~$l+1$. We know that there exists the mapping class~$\varphi_l$ on~$N_{l+1}$, with dilatation~$\lambda(\varphi_l)$.
Furthermore, let~$\phi$ be any mapping class on~$N_{l+1}$ arising from Penner's construction.
As before, we are allowed to assume that every curve used for the construction of~$\phi$ gets twisted along exactly once.
We distinguish cases depending on the intersection graph of the curves used in the construction of~$\phi$.

\emph{Case 1: the intersection graph contains a double edge.} Let~$c_1$ and~$c_2$ be two curves that intersect at least twice.
Since a bipartite family of curves which intersect inconsistently cannot fill a nonorientable surface, there must be at least one other curve~$c_3$ intersecting either~$c_1$ or~$c_2$.
In particular, the intersection graph of the curves~$\{c_i\}$ contains the tree~$\Gamma$ with three vertices, one double edge and one simple edge as a subgraph,
depicted in Figure~\ref{9subgraph} on the left.
The adjacency matrix of this tree has maximal eigenvalue~$\sqrt{5}$ and we use Lemma~\ref{treebound} to conclude
$$\lambda(\phi)\ge\frac{7+3\sqrt{5}}{2}\approx6.854.$$
This number is larger than the dilatation of any mapping class~$\varphi_l$ by the values given in Table~\ref{cyclecasestable} and the monotonicity due to Lemma~\ref{fromabove}.

\begin{figure}[h]
\begin{center}
\def\svgwidth{150pt}
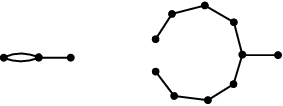
\caption{}
\label{9subgraph}
\end{center}
\end{figure}

\emph{Case 2: the intersection graph contains an odd cycle of length~$k\le l$}: In this case, by an argument similar to the argument used to prove Lemma~\ref{treebound},
the dilatation is always bounded from below by the dilatation of a pseudo-Anosov mapping class arising from Penner's construction using curves that intersect with the pattern of an odd cycle of length~$k\le l$.
In particular, Lemmas~\ref{bestflow} and~\ref{fromabove} imply~$\lambda(\phi)\ge\lambda(\varphi_k)\ge\lambda(\varphi_l)$.
\begin{table}[h]
\begin{tabular}{| c | c | c |}
\hline
cycle length & flow difference & dilatation \\ \hline
3 & 1 & $\approx 6.222$ \\ \hline
5 & 1 & $\approx 5.961$ \\ \hline
5 & 3 & $\approx 7.520$ \\ \hline
7 & 1 & $\approx 5.895$ \\ \hline
7 & 3 & $\approx 6.529$ \\ \hline
7 & 5 & $\approx 8.841$ \\ \hline
\end{tabular}
\smallskip
\caption{Some dilatations for short odd cycles.}
\label{cyclecasestable}
\end{table}

\emph{Case 3: the intersection graph only contains odd cycles of length~$k>l$:}
Take an odd cycle of minimal length~$k>l$ among odd cycles.
This cycle is necessarily an induced subgraph of the intersection graph.
Otherwise, the intersection graph would either have to contain a double edge (which we may rule out by Case~1) or an edge connecting two nonadjacent vertices of the cycle,
which implies the existence of an odd cycle of length~$<k$.
Hence, by Lemma~\ref{genusbound}, the genus of the surface~$N_{l+1}$ is bounded from below by~$k+1>l+1=g$,
a contradiction.
\end{proof}

\subsection{A proof of Theorem~\ref{evengenusexact} and almost a proof of Theorem~\ref{dilatationlimits}}
By Theorem~\ref{evengenus} and Proposition~\ref{specialisation},
we know that for even genus~$g$, the minimal dilatation~$\delta_P(N_g)$ equals the largest real solution of the equation~$$t-t^{\frac{g}{2g-2}}-t^{\frac{g-2}{2g-2}}-1=0.$$
Setting~$g=2k$ yields exactly the statement of Theorem~\ref{evengenusexact}.

We are now ready to show Theorem~\ref{dilatationlimits}, except for the existence of the limit~$\lim_{k\to\infty}\delta_P(N_{2k+1})$ for nonorientable closed surfaces of odd genus.

\begin{thm}
\label{dilatationlimitsalmost}
For the minimal dilatation~$\delta_P(N_g)$ among pseudo-Anosov mapping classes arising from Penner's construction on the nonorientable closed surface of genus~$g$,
the limit $\lim_{k\to\infty}\delta_P(N_{2k})$ exists, and
\begin{enumerate}
\item[(a)] $\lim_{k\to\infty}\delta_P(N_{2k}) = 3+2\sqrt{2}$,
\item[(b)] $\liminf_{k\to\infty}\delta_P(N_{2k+1})>3+2\sqrt{2}$.
\end{enumerate}
\end{thm}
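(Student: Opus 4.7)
The argument splits into two independent parts: part~(a) is a direct limit calculation based on the explicit formula of Theorem~\ref{evengenusexact}; part~(b) requires a case analysis on the intersection graph of a Penner mapping class on~$N_{2k+1}$, where the hard case appeals to the enriched-cycle theory from Section~\ref{enriched_section}.

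For part~(a), Theorem~\ref{evengenusexact} gives $\delta_P(N_{2k})=f(1,2k-1)$ in the notation of Proposition~\ref{function}. By the $0$-homogeneity in Proposition~\ref{function}(1), this equals $f(1/(2k-1),1)$, i.e.\ the largest real solution of $t-t^{(1+s)/2}-t^{(1-s)/2}-1=0$ for $s=1/(2k-1)$. Passing to the limit $s\to 0$ yields the equation $t-2t^{1/2}-1=0$, whose largest real root is $(1+\sqrt{2})^2=3+2\sqrt{2}$. Continuity of the dominant real root in $s$, as established in Proposition~\ref{function}(2), together with monotonicity from Lemma~\ref{fromabove}, then proves~(a).

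For part~(b), the plan is to argue by contradiction. Suppose there is a sequence of Penner mapping classes $\phi_j$ on $N_{2k_j+1}$ with $\lambda(\phi_j)\to L\le 3+2\sqrt{2}$, and let $\Gamma_j$ denote the intersection graph of $\phi_j$. If $\Gamma_j$ contains a double edge, then as in Case~1 of the proof of Theorem~\ref{evengenus}, Lemma~\ref{treebound} applied to a three-vertex tree with adjacency eigenvalue $\sqrt{5}$ yields $\lambda(\phi_j)\ge(7+3\sqrt{5})/2>3+2\sqrt{2}$, already contradicting $\lambda(\phi_j)\to L$. So we may assume each $\Gamma_j$ is simple. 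Lemma~\ref{notbipartite} then provides an induced odd cycle $C_j\subseteq\Gamma_j$ of length $l_j$, and Perron--Frobenius submatrix monotonicity applied as in Case~2 of the proof of Theorem~\ref{evengenus} gives $\lambda(\phi_j)\ge\lambda(\varphi_{l_j})$. Should $l_j$ remain bounded, Lemma~\ref{fromabove} would produce a uniform lower bound $>3+2\sqrt{2}$, again a contradiction; hence $l_j\to\infty$.

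To finish, observe that by Corollary~\ref{fillingcyclecor} the intersection graph $\Gamma_j$ cannot coincide with $C_j$, since $N_{2k_j+1}$ has odd genus. Hence $\Gamma_j$ strictly contains $C_j$, and one can extract an induced enriched-cycle subgraph consisting of $C_j$ together with an extra vertex adjacent to some vertex of $C_j$. Submatrix monotonicity then bounds $\lambda(\phi_j)$ from below by the Penner dilatation of this enriched-cycle subgraph. The main obstacle is to establish a uniform strict inequality
\[
\lambda(\phi_j) \ge 3+2\sqrt{2} + \epsilon
\]
for some $\epsilon>0$ independent of~$j$, which requires controlling how the enriched-cycle dilatations behave as $l_j\to\infty$: this is precisely where the Alexander-polynomial and skein-relation techniques developed in Section~\ref{enriched_section} are invoked, and producing the strict gap of size $\epsilon$ is the central technical point of the odd-genus argument.
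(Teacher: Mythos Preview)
Your part~(a) is correct and matches the paper's argument.

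Your part~(b), however, has a genuine gap, and it misidentifies where the key input comes from. You correctly reduce to the situation where the intersection graph contains an induced odd cycle of arbitrarily large length together with an extra adjacent vertex, and then assert that ``the Alexander-polynomial and skein-relation techniques developed in Section~\ref{enriched_section}'' supply the uniform $\epsilon$-gap. They do not. The results of Section~\ref{enriched_section} (Lemmas~\ref{panminimum} and~\ref{pandecreasing}) show only that the minimal enriched-cycle dilatation at length~$l$ is~$\mu_l$ and that $\mu_l$ is nonincreasing in~$l$; they do \emph{not} establish that $\lim_{l}\mu_l > 3+2\sqrt{2}$. Indeed, this strict inequality for the limit is never proved in the paper via the enriched-cycle machinery---the conjectured value of the limit is about~$6.07$, but that remains a conjecture. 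So your proposed route does not close.

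The paper's actual proof of~(b) is simpler and does not use Section~\ref{enriched_section} at all; note that Theorem~\ref{dilatationlimitsalmost} is stated and proved in Section~\ref{evengenus_section}, \emph{before} the enriched-cycle theory is even developed. After reducing (much as you do) to an intersection graph containing an induced odd cycle of length~$>9$ together with at least one extra adjacent vertex, the paper observes that such a graph contains a certain fixed \emph{tree} (Figure~\ref{9subgraph}, right) as a subgraph. A single application of Lemma~\ref{treebound} to this tree then yields the concrete numerical lower bound $\lambda(\phi)\ge 5.946 > 3+2\sqrt{2}+\tfrac{1}{10}$, uniformly in~$l$. In other words, the required $\epsilon$-gap comes from a one-off tree-subgraph eigenvalue computation, not from fibred-link techniques; the latter are used only later, to show that the limit $\lim_{k\to\infty}\delta_P(N_{2k+1})$ actually exists.
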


\begin{proof}
By Theorem~\ref{evengenusexact},
we know that for even~$g$,~$\delta_P(N_g)$ equals the largest real solution of the equation~$$t-t^{\frac{g}{2g-2}}-t^{\frac{g-2}{2g-2}}-1=0.$$
As~$g\to\infty$, this solution converges to the largest real solution of the equation~$$t-2t^{\frac{1}{2}}-1=0,$$
which is~$3+2\sqrt{2}$, the square of the silver ratio. This proves the existence of the limit~$\lim_{k\to\infty}\delta_P(N_{2k})$ and the exact value in~(a).

In order to prove~(b), we show that a Penner dilatation on a nonorientable surface of odd genus is bounded from below by~$3+2\sqrt{2}+\delta$, where~$\delta=\frac{1}{10}$.
We can use similar steps as in the proof of Theorem~\ref{evengenus} and the values in Table~\ref{cyclecasestable}
to reduce the argument to the case where the intersection graph contains an induced cycle of length~$>9$.
By Corollary~\ref{fillingcyclecor}, the corresponding curves cannot fill the surface, since it is of odd genus.
Hence, the intersection graph must contain at least one more vertex connecting to the induced cycle.
In particular, it contains a subgraph of the form depicted in Figure~\ref{9subgraph} on the right.
In this case, we use Lemma~\ref{treebound} to obtain that the dilatation is bounded from below by~$5.946$.
\end{proof}

In order to show that the limit~$\lim_{k\to\infty}\delta_P(N_{2k+1})$ exists, we need a better grip on the actual minimal Penner dilatations~$\delta_P(N_g)$ for odd~$g$.
To this end, we study odd cycles with an extra vertex in the next section.

\section{Dilatation theory of the enriched cycle}
\label{enriched_section}
Let~$P_l$ be the \emph{enriched cycle of length}~$l$, that is, the~$l$-cycle with an additional vertex connecting to exactly one vertex of the cycle.
In order to deal with closed nonorientable surfaces of odd genus, we have to study these examples systematically.
The goal of this section is to prove the following analogues of Lemma~\ref{bestflow} and Lemma~\ref{fromabove} for enriched cycles.
For~$l$ odd, let~$\mu_l$ be the dilatation arising from Penner's construction using curves that have~$P_l$ as their intersection graph and a Dehn twist product with flow difference~$1$.
By the \emph{flow difference} of an enriched cycle we just mean the flow difference of the induced cycle obtained by removing the extra vertex.
As in Lemma~\ref{flowdiffconjugacy}, there is exactly one conjugacy class for each flow difference.

\begin{lem}
\label{panminimum}
The minimal dilatation arising from Penner's construction using curves that have~$P_l$ as their intersection graph is~$\mu_l$.
\end{lem}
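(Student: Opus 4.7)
My plan is to adapt the two-step strategy used for the pure cycle in Section~\ref{cycle_section} to the enriched cycle $P_l$. First, I would show that the conjugacy class of the Penner matrix $\rho(\phi)$ depends only on the flow difference $d$ of the cycle part of $P_l$ (and in particular not on where in the product order the Dehn twist along the pendant curve appears). This should follow from the same Shi--Pretzel machinery used to prove Lemma~\ref{flowdiffconjugacy}: two acyclic orientations of $P_l$ are related by source-to-sink moves exactly when they have the same flow difference around the unique cycle, because the pendant edge lies outside that cycle and its orientation can be freely toggled by source-to-sink operations at the pendant vertex. Up to the reflection symmetry that reverses all arrows, the dilatation is then a function of $|d|$ alone, and since $l$ is odd the admissible values of $|d|$ are exactly the odd integers in $[1,l-2]$.

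Second, I would prove strict monotonicity of the dilatation $\lambda(d)$ in $|d|$, whence the minimum is attained at $|d|=1$, giving $\mu_l$. Unlike the pure-cycle case, there is no rotational symmetry available, so one cannot mimic the explicit calculation behind Propositions~\ref{function} and~\ref{specialisation}. I would instead invoke the fibred link viewpoint flagged in the introduction: each Penner mapping class with intersection graph $P_l$ is the monodromy of a fibred link in $\Ss^3$ obtained by plumbing positive and negative Hopf bands to a disc in a pattern dictated by the word, and its dilatation is the largest real root of the Alexander polynomial of this link. Two such mapping classes whose flow differences differ by $2$ correspond to fibred links that differ at exactly one crossing. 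The skein relation for the Alexander polynomial then expresses the difference of the two polynomials as a monomial multiple of the polynomial of a simpler link; I expect this difference to have a definite sign on $(1,\infty)$ (in particular at the largest real roots in question), so that the largest real root strictly increases with $|d|$.

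The main obstacle is precisely this monotonicity step, and it is the skein-theoretic bookkeeping that carries the weight. Concretely, one must identify the correct fibred link for each acyclic orientation of $P_l$, isolate the single crossing change that realises a jump from flow difference $d$ to $d+2$, apply the Alexander skein relation cleanly, and verify that the resulting difference polynomial has the desired sign at the relevant real roots. This is the ``very particular form'' of the difference polynomial advertised in the introduction. Once strict monotonicity in $|d|$ is established, Lemma~\ref{panminimum} follows immediately by taking $|d|=1$.
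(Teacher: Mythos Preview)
Your overall two-step plan---reduce to the flow difference via Shi--Pretzel, then prove monotonicity in $|d|$ via fibred links and the Alexander skein relation---is exactly the paper's strategy. However, your description of the skein-theoretic step contains a genuine misconception, and one preliminary step is missing.

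First, the missing step: a Penner mapping class with intersection graph $P_l$ (with $l$ odd) lives on a \emph{nonorientable} surface and is therefore not the monodromy of any fibred link in $\Ss^3$. The paper first passes to the orientable double cover, which doubles the cycle length and flow difference and produces an even cycle with \emph{two} pendant vertices at opposite ends (one twisted positively, one negatively). It is this lifted mapping class that is realised as a fibred link monodromy by plumbing Hopf bands, with two extra bands $H_1$ and $H_2$ for the two pendants.

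Second, the misconception: the links for flow differences $d$ and $d+2$ do \emph{not} differ at a single crossing, so you cannot ``isolate the single crossing change that realises the jump''. Changing the flow difference by $2$ reorders the plumbing and alters several over/under relations at once. The paper's argument works differently: one applies the skein relation at a crossing of the pendant band $H_1$ to \emph{each} link separately. The key observation is that the crossing-changed link $L_-$ is a plumbing of Hopf bands along a \emph{tree}, whose Alexander polynomial is independent of the twist order (Steinberg), hence independent of $d$. Subtracting, these terms cancel, leaving $\Delta_{L(d,l)}-\Delta_{L(d+2,l)}$ equal to $(\sqrt{t}-1/\sqrt{t})$ times the corresponding difference for the $L_0$-smoothings. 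A second skein relation on $H_2$ removes the other pendant by the same mechanism, reducing to the pure even-cycle difference $\Delta_{d,l}-\Delta_{d+2,l}$. That difference is then computed by yet another cascade of skein relations (Proposition~\ref{alexdiff}, via Lemmas~\ref{diffreduction}--\ref{torusalex}), not by a single crossing change. The resulting explicit formula $\chi_{d,l}-\chi_{d+2,l}=(t-1)^3\bigl(t^{(l+d)/2}-t^{(l-d-2)/2}\bigr)$ is manifestly positive for $t>1$, giving the monotonicity. So the ``very particular form'' of the difference polynomial comes from an iterated reduction exploiting order-independence on trees, not from a one-crossing comparison.
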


\begin{lem}
\label{pandecreasing}
We have~$\mu_l\ge\mu_{l+2}$.
\end{lem}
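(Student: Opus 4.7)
The plan is to realise both~$\mu_l$ and~$\mu_{l+2}$ as the largest real root of the Alexander polynomial of a suitable fibred link in~$\Ss^3$, following the strategy advertised in the introduction. To the flow-difference-$1$ Penner data on~$P_l$ one associates a link~$L_l \subset \Ss^3$ obtained by plumbing positive or negative Hopf bands onto a disc, one band per vertex of~$P_l$, with signs dictated by Penner's inconsistency condition and plumbing order matching the Dehn twist product order. The monodromy of the resulting fibred surface is conjugate to the Penner mapping class on the appropriate regular neighbourhood, so~$\mu_l$ equals the largest real root of~$\Delta_{L_l}(t)$.

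Once both dilatations are realised in this form, I would compare them via the skein relation
\[
\Delta_{L_+}(t) - \Delta_{L_-}(t) = (t^{1/2} - t^{-1/2})\,\Delta_{L_0}(t).
\]
Arrange $L_{l+2}$ to differ from $L_l$ only by the insertion of two extra Hopf-band plumbings in the cyclic part of the diagram, with the extra vertex and its plumbing untouched. Crossings located inside the two new bands then produce skein triples relating $L_{l+2}$, $L_l$, and auxiliary links~$L_0$ whose Alexander polynomials are easier to analyse (being built from strictly fewer Hopf bands, and still fibred). Iterating the skein relation at the two inserted crossings and carefully tracking the Hopf-band signs produces an explicit identity expressing $\Delta_{L_{l+2}}(t) - \Delta_{L_l}(t)$ as a product of a factor of the form $(t^{1/2}-t^{-1/2})^{2}$ with a smaller Alexander polynomial of definite sign on~$t>1$.

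The monotonicity $\mu_l \geq \mu_{l+2}$ then follows by a standard Perron-Frobenius argument: if the identity shows that $\Delta_{L_{l+2}}(\mu_l)$ has the appropriate (nonpositive) sign, then since $\Delta_{L_{l+2}}$ is a monic polynomial that is strictly increasing past its largest real root~$\mu_{l+2}$, one concludes $\mu_{l+2} \leq \mu_l$. The main obstacle is the combinatorial bookkeeping in the skein computation: verifying that the two-step resolution genuinely reproduces~$L_l$ (rather than some other plumbing pattern), correctly tracking the sign pattern of positive versus negative Hopf bands through the intermediate resolutions, and confirming that the auxiliary Alexander polynomial has the sign required on the relevant interval. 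Once this is done, Lemma~\ref{pandecreasing} plays the same role in the odd genus argument that Lemma~\ref{fromabove} plays in Theorem~\ref{evengenus} for the even genus case.
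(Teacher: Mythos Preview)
Your approach is not what the paper does, and it carries a genuine gap. The paper does \emph{not} use the fibred-link/skein technology for Lemma~\ref{pandecreasing}; that machinery is reserved for Lemma~\ref{panminimum}, where one compares different flow differences on a cycle of \emph{fixed} length. For Lemma~\ref{pandecreasing} the paper instead proves a local monotonicity criterion (Proposition~\ref{localdecrease_prop}) via direct manipulation of the Perron--Frobenius eigenvector: one locates a source vertex of~$P_{l}$ whose eigenvector entry is minimal, and uses the min--max characterisation of the Perron--Frobenius eigenvalue (Lemma~\ref{minmax}) to build, by hand, a test vector on the longer enriched cycle~$P_{l+2}$ that certifies $\mu_{l+2}\le\mu_l$. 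A handful of small cases are checked numerically, and a separate tree-comparison argument rules out the possibility that the minimal eigenvector entry sits at the trivalent vertex~$v_1$.

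The specific gap in your plan is the assertion that resolving crossings in the two newly inserted Hopf bands of~$L_{l+2}$ produces~$L_l$ among the skein terms. It does not. A crossing change at a Hopf-band crossing \emph{untwists} that band; a smoothing \emph{cuts} it, breaking the cycle into a path. Neither operation removes a plumbed band from the surface. After two skein steps you therefore land on links with the wrong combinatorics---an $(l+2)$-cycle with some bands untwisted, or an enriched path---never on the shorter enriched cycle~$L_l$. The fibre surfaces for~$L_l$ and~$L_{l+2}$ have different first Betti numbers, so their Alexander polynomials have different degrees and there is no short skein chain connecting them. The cancellation that makes Proposition~\ref{alexdiff} work (the $L_0$-terms are plumbings along trees, hence independent of the flow difference) is special to the fixed-length comparison and does not survive a change of~$l$. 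You flag this bookkeeping as ``the main obstacle,'' but as written there is no indication of how to overcome it, and the claimed identity for $\Delta_{L_{l+2}}-\Delta_{L_l}$ does not follow from the steps you describe.
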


In order to prove Lemma~\ref{panminimum} and Lemma~\ref{pandecreasing},
we will study fibred link representatives and the Perron-Frobenius eigenvectors of the matrices associated with the mapping classes arising from Penner's construction, respectively.

\subsection{Fibred links}
An oriented compact surface~$\Sigma$ (with oriented boundary) embedded in~$\Ss^3$ is a \emph{fibre surface}
if its interior~$\overset{\circ}{\Sigma}$ is the fibre of a locally-trivial fibre bundle~$p:\Ss^3\setminus\partial\Sigma\to\Ss^1$.
In this case, the oriented boundary~$\partial\Sigma$ is called a \emph{fibred link}.
Such a fibration is determined by a mapping class of~$\Sigma$ up to conjugation, the \emph{monodromy} of the fibration.

Given two oriented surfaces embedded in~$\Ss^3$, it is possible to obtain new oriented surfaces by \emph{plumbing}, that is, glueing the two surfaces
(which are separated by an oriented embedded sphere $\Ss^2$) together along a square (which is contained in the sphere~$\Ss^2$)
whose boundary arcs alternatingly belong to the boundary of one surface or the other,
see Figure~\ref{plumbingex} for an example. We assume that in the plumbing square, the orientations of both surfaces and the sphere agree.

\begin{figure}[h]
\begin{center}
\def\svgwidth{350pt}
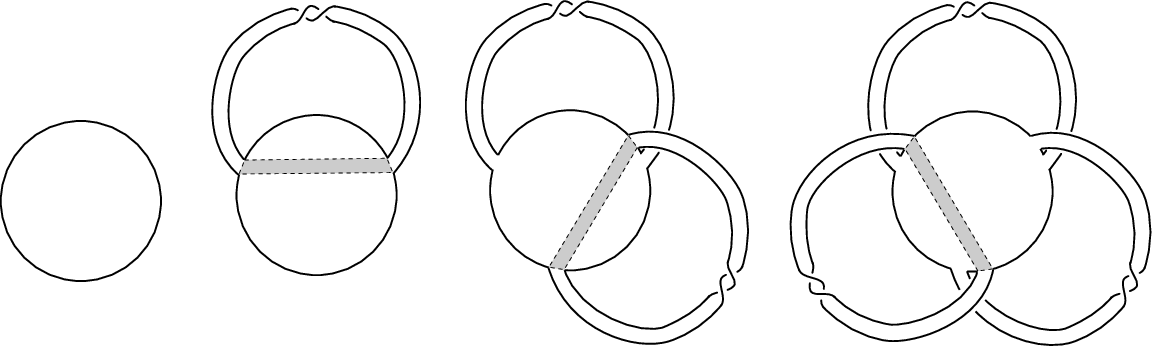
\caption{A fibre surface which is obtained by successive plumbing of Hopf bands to the standard disc. The plumbing square for each of the plumbings is coloured grey.}
\label{plumbingex}
\end{center}
\end{figure}

By a result of Stallings, a plumbing~$\Sigma$ of two fibre surfaces~$\Sigma_1$ and~$\Sigma_2$ is again a fibre surface~\cite{St}.
Furthermore, we now assume~$\Sigma_1$ to be on the negative side (the ``inside") of the sphere~$\Ss^2$ and~$\Sigma_2$ on the positive side (the ``outside") of the sphere~$\Ss^2$.
Then, the monodromy~$\phi$ of the plumbing is given by the composition~$\phi_1\circ\phi_2$ of the two monodromies~$\phi_1$ and~$\phi_2$ of the plumbing summands~$\Sigma_1$ and~$\Sigma_2$,
extended to~$\Sigma$ by the identity on~$\Sigma\setminus\Sigma_1$ and~$\Sigma\setminus\Sigma_2$, respectively.
For this to make sense, recall that mapping classes of surfaces with boundary are assumed to fix the boundary pointwise.

The positive Hopf band and the negative Hopf band are fibre surfaces and their monodromies are a positive Dehn twist and a negative Dehn twist along the core curve, respectively.
This fact, as well as Stallings' result, is accessibly explained by Baader and Graf, who interpret the concept of fibredness in terms of elastic cords~\cite{BaGr}.
By Stallings' result, a successive plumbing of Hopf bands yields a product of Dehn twists along the core curves of the Hopf bands plumbed.
In this way, it is possible to represent certain mapping classes as monodromies of fibred links.

\subsection{Realising Penner mapping classes as fibred link monodromies}
\label{rep_sec}
With the preceding discussion on fibred link monodromies, it is clear what we should do in order to obtain mapping classes that arise from Penner's construction as monodromies of fibred links:
plumb positive and negative Hopf bands such that the core curves of the positive Hopf bands do not intersect among themselves and likewise for the negative Hopf bands.
For instance, Figure~\ref{flowdiff2} depicts two fibre surfaces. Both are obtained from the closed standard disc, which is situated in the middle, by consecutive plumbing of Hopf bands.
There is a total of three positive and three negative Hopf bands plumbed in alternating fashion.
In this way, we obtain fibred links whose monodromy is a product of Dehn twists along curves which intersect each other with the pattern of a cycle.
Furthermore, the monodromy is a pseudo-Anosov mapping class arising from Penner's construction by using the core curves of the plumbed positive Hopf bands as one multicurve and the core curves of the plumbed negative Hopf bands as the other multicurve.
It is straightforward to see that we are able to represent any order of Dehn twists by varying the order of plumbing, and, in particular, every flow difference.

\begin{figure}[h]
\begin{center}
\def\svgwidth{360pt}
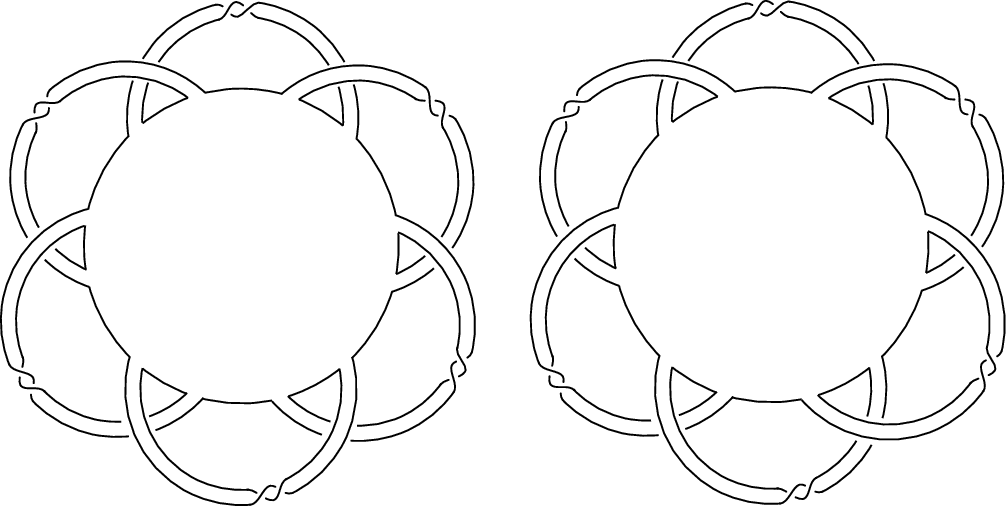
\caption{Fibred links realising flow difference~$0$ (on the left) and flow difference~$2$ (on the right).}
\label{flowdiff2}
\end{center}
\end{figure}

\begin{remark}\emph{
By the process of representing Penner mapping classes as monodromies of fibred links, we only obtain orientable surfaces.
However, we can still study the dilatations of Penner mapping classes on nonorientable surfaces by first lifting to the orientable double cover.
For example, if we want to study the dilatation of a Penner mapping class given by an odd cycle of length~$l$ with a flow difference~$d$,
we can lift it to a Penner mapping class on an orientable surface, with intersection graph the cycle of length~$2l$ and with flow difference~$2d$.
The dilatations of the original Penner mapping class and its lift agree and the latter can be represented by a fibred link monodromy.
In this context, we recall that a Penner dilatation depends only on the intersection graph and the twist order,
so it suffices to represent this information and not the actual Penner mapping classes.
}\end{remark}

\subsection{The Alexander polynomial of fibred links}
The Alexander polynomial~$\Delta_L$ of an oriented link~$L$ is defined recursively
by the skein relation~
\begin{align*}
\Delta_{L_+} = \Delta_{L_-}+\left(\sqrt{t}-\frac{1}{\sqrt{t}}\right)\Delta_{L_0},\tag{SR}\label{SR}
\end{align*}
and the initial condition~$\Delta_U = 1$, where~$U$ is the unknot.
For a fixed crossing, the links~$L_+$,~$L_-$ and~$L_0$ correspond to the positive version of the crossing, the negative version of the crossing and the orientation-preserving smoothing of the crossing, see Figure~\ref{skein}.
\begin{figure}[h]
\begin{center}
\def\svgwidth{200pt}
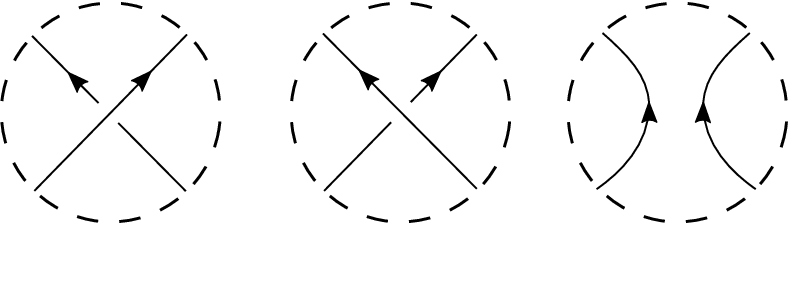
\caption{The links~$L_+$,~$L_-$ and~$L_0$ used in the skein relation are obtained by local adjustments at a crossing.}
\label{skein}
\end{center}
\end{figure}
The important property in our context is that for fibred links, the Alexander polynomial equals the characteristic polynomial of the action on the first homology of the fibre surface induced by the monodromy,
up to a normalisation factor, see, for example,~\cite{BuZi}. The normalisation factor equals~$\sqrt{t}^{\ b_1(L)}$ with a possible sign~$-1$ to make the leading coefficient positive.
This follows from the fact that the characteristic polynomial of a matrix of size~$b_1(L)$ is of degree~$b_1(L)$ and with leading coefficient~$+1$,
while the highest power of~$t$ appearing with nonzero coefficient in the Alexander polynomial of a fibred link is~$\sqrt{t}^{\ b_1(L)}$, where~$b_1(L)$ is the first Betti number of the fibre surface for~$L$.
We would like to stress that while the Alexander polynomial is often defined up to powers of the variable and up to sign, the skein-theoretic definition we use here gives a well-defined
Laurent polynomial in~$\sqrt{t}$.

\subsection{A proof of Lemma~\ref{panminimum}}

Let~$\Delta_{d,l}$ be the Alexander polynomial of the fibred link realisation of the mapping class arising via Penner's construction on curves that intersect with the pattern of a cycle of even length~$l=p+q$,
where every curve is twisted along exactly once and the twist order yields flow difference~$d=p-q$, where we may assume~$d\ge0$.
The following proposition contains the key result on Alexander polynomials of fibred link realisations.

\begin{prop}
\label{alexdiff}
In the above notation, we have
$$\Delta_{d,l}-\Delta_{d+2,l} =  \left(\sqrt{t}-\frac{1}{\sqrt{t}}\right)\left(\sqrt{t}^{\ d+1}-\sqrt{t}^{\ -(d+1)}\right).$$
\end{prop}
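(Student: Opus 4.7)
The plan is to realise both sides of the identity as coming from a single crossing change in a pair of carefully chosen fibred link representatives, and then to apply the Alexander polynomial skein relation~\eqref{SR}.

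First I would fix concrete representatives. By Lemma~\ref{flowdiffconjugacy} the polynomials $\Delta_{d,l}$ and $\Delta_{d+2,l}$ depend only on $l$ and the flow difference, so I am free to choose any product order of the twists realising the desired flow difference. Starting from a fibred-link representative of the Penner mapping class with flow difference $d$, built as in Section~\ref{rep_sec} by plumbing $l$ alternating positive and negative Hopf bands to a disc, I would produce a representative with flow difference $d+2$ by swapping the plumbing order of exactly one adjacent pair of Hopf bands whose core curves intersect. Reversing the plumbing order of such a pair reverses exactly one edge of the acyclic orientation of $C_l$ and hence changes the flow difference by $\pm 2$; the sign can be arranged to be $+2$. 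In the fibred-link picture, this swap interchanges which band lies above the other at their common plumbing square, and so translates to a single crossing change in the boundary link.

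Applying~\eqref{SR} at that distinguished crossing, with sign determined by the bipartite colouring of $C_l$ (which forces the two swapped bands to have opposite sign), then gives
\[
\Delta_{d,l} - \Delta_{d+2,l} = \left(\sqrt{t} - \tfrac{1}{\sqrt{t}}\right)\Delta_{L_0},
\]
where $L_0$ is the boundary of the plumbed surface obtained by the orientation-preserving smoothing of the distinguished crossing. The proof then reduces to identifying $L_0$ and showing
\[
\Delta_{L_0} = \sqrt{t}^{\,d+1} - \sqrt{t}^{\,-(d+1)}.
\]

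The decisive step, and the main obstacle, is this last identification. The oriented smoothing fuses the two swapped Hopf bands into a single band and cuts the distinguished edge out of the cycle, so the plumbing pattern for $L_0$ becomes a path of Hopf bands rather than a closed cycle. My plan is to simplify this path by standard destabilisation moves: adjacent positive and negative Hopf plumbings along a common arc cancel isotopically, so opposite-sign pairs can be peeled off along the path until only an imbalance of $d+1=(p-q)+1$ same-sign Hopf plumbings remains. The Alexander polynomial of the residual fibred link can then be computed by further applications of~\eqref{SR}, and the bookkeeping should yield the two-term Laurent polynomial $\sqrt{t}^{\,d+1} - \sqrt{t}^{\,-(d+1)}$. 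Making this bookkeeping precise---verifying that exactly $d+1$ bands survive the cancellation, that their signs match up, and that the residual Alexander polynomial takes this clean two-term form rather than a longer alternating form---is where the bulk of the argument will live, and it is precisely the observation that this difference is ``of a very particular form'' that the authors advertise in the introduction.
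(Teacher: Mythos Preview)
Your overall strategy---relate the two representatives by a crossing change and invoke~\eqref{SR}---is the right idea, but the execution diverges from the paper's in a way that creates a real gap.

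The first problem is the assertion that swapping the plumbing order of two adjacent Hopf bands is a \emph{single} crossing change in the boundary link. Where two bands overlap, each contributes two boundary strands, so flipping the over/under relation alters several crossings at once; it is not clear how to isotope this down to one. You assert this step rather than justify it, and the whole computation of $\Delta_{L_0}$ rests on it. The second problem is the cancellation claim: in a Hopf plumbing along a path, adjacent bands are plumbed along \emph{different} arcs, so a positive and a negative band do not literally deplumb one another. Even if some cancellation occurs, the residual link after your ``peeling'' would be a $(2,d{+}1)$ torus knot or link, whose Alexander polynomial is an alternating sum, not the clean two-term expression $\sqrt{t}^{\,d+1}-\sqrt{t}^{\,-(d+1)}$ you need.

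The paper sidesteps both issues by applying the skein relation not at a crossing between bands, but at the twist crossing \emph{inside} each Hopf band, untwisting them one by one. The key observation (Lemma~\ref{diffreduction}) is that every $L_0$ arising this way is a plumbing along a path, and for path plumbings the over/under data---hence the flow difference---is irrelevant up to isotopy. Thus all the $L_0$-contributions cancel in the difference $\Delta_{d,l}-\Delta_{d+2,l}$, leaving $\Delta_{H_d}-\Delta_{H_{d+2}}$, where $H_d$ is the link with \emph{all} bands untwisted. These residual links are then handled by further skein moves reducing to torus link polynomials (Lemmas~\ref{Hr} and~\ref{torusalex}), and the two-term form drops out of the telescoping identity $\Delta_{T_{2,d+2}}+\Delta_{T_{2,d}}=\sqrt{t}^{\,d+1}-\sqrt{t}^{\,-(d+1)}$. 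So the ``very particular form'' is not the Alexander polynomial of a single smoothed link, as you hoped, but rather emerges from a telescoping sum after a global untwisting.
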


Our proof of Proposition~\ref{alexdiff} relies on the skein relation for the Alexander polynomial. We split it up into several separate statements.
For a natural number~$d$, let~$H_d$ be the link obtained by the closure of~$d$ stacked copies of the braid depicted on the left in Figure~\ref{H4}.
On the right in Figure~\ref{H4}, the link~$H_d$ is shown for~$d=4$.
\begin{figure}[h]
\begin{center}
\def\svgwidth{150pt}
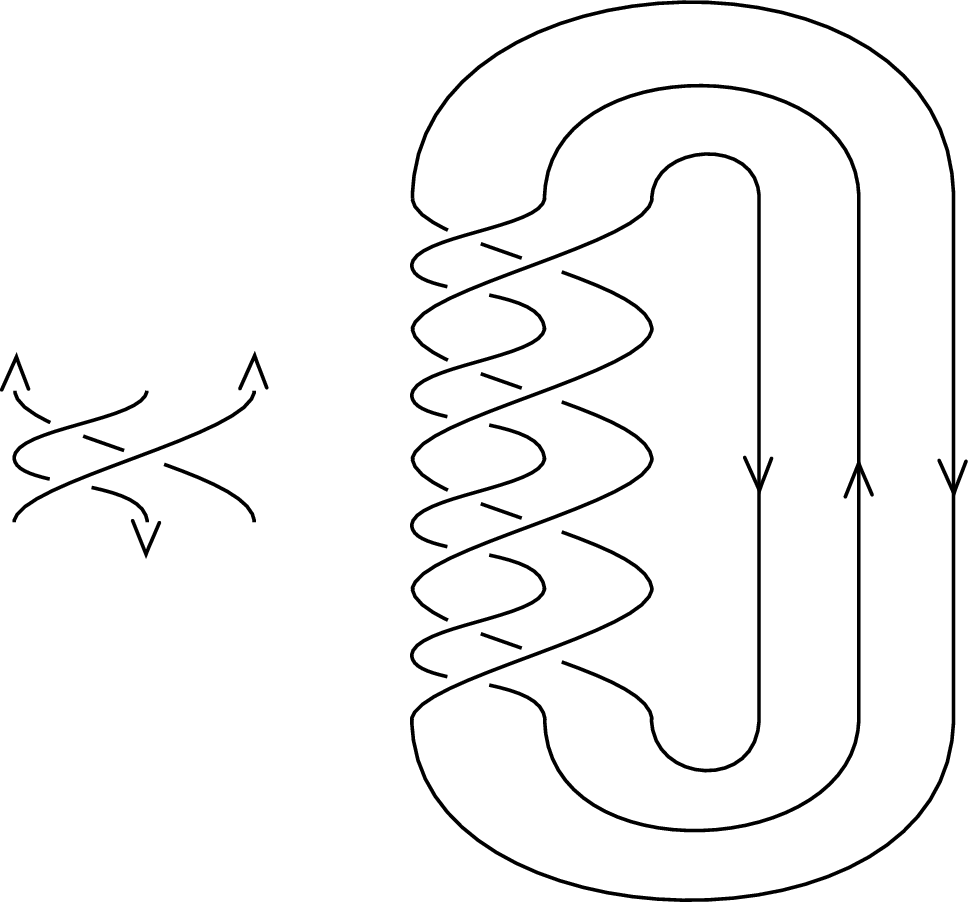
\caption{The braid building block for the links~$H_d$ (on the left) and the link~$H_4$ (on the right).}
\label{H4}
\end{center}
\end{figure}

\begin{lem}
\label{diffreduction}
Let~$p\ge q$,~$p'\ge q'$ be strictly positive natural numbers such that~$p+q=p'+q'=l$. Let~$d=p-q$ and~$d'=p'-q'$.
Then, we have
$$\Delta_{d,l}-\Delta_{d',l}=\Delta_{H_{d}}-\Delta_{H_{d'}}.$$
\end{lem}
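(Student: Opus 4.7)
The plan is to use the skein relation \eqref{SR} together with the fibred link realization of Section~\ref{rep_sec} to reduce the comparison to a single crossing change, and then to identify the resulting smoothed link as being the same on both sides.

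First, since $d$ and $d'$ have the same parity as $l$ and are both nonnegative, we can telescope
\[
\Delta_{d,l}-\Delta_{d',l} = \sum_{i}\bigl(\Delta_{d_i,l}-\Delta_{d_i+2,l}\bigr),\qquad \Delta_{H_d}-\Delta_{H_{d'}} = \sum_{i}\bigl(\Delta_{H_{d_i}}-\Delta_{H_{d_i+2}}\bigr),
\]
where $d_i$ runs over the values $d, d+2, \ldots, d'-2$. Hence it suffices to prove the lemma in the special case $d' = d+2$.

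Next, I would observe that the fibred link realizations for flow differences $d$ and $d+2$ on the cycle of length $l$ differ by a single local move: swapping the plumbing order of one adjacent pair of oppositely-signed Hopf bands toggles the orientation of exactly one edge in the acyclic orientation of the cycle, changing the flow difference by $\pm 2$. In the link diagram, this swap is precisely a single crossing change. Applying the skein relation \eqref{SR} at that crossing gives
\[
\Delta_{d,l}-\Delta_{d+2,l} = \left(\sqrt{t}-\tfrac{1}{\sqrt{t}}\right)\Delta_{L_0^{\mathrm{cyc}}},
\]
where $L_0^{\mathrm{cyc}}$ is the oriented smoothing at the chosen crossing. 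An entirely parallel observation for $H_d$ (which has the same local braid structure by construction, so the same smoothing move makes sense) yields
\[
\Delta_{H_d}-\Delta_{H_{d+2}} = \left(\sqrt{t}-\tfrac{1}{\sqrt{t}}\right)\Delta_{L_0^{H}}.
\]

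The crux is then a geometric identification: showing $\Delta_{L_0^{\mathrm{cyc}}} = \Delta_{L_0^{H}}$. Smoothing the chosen crossing fuses the two adjacent Hopf bands sharing that plumbing square into a single strip; this strip can be pushed along the surface by an ambient isotopy in $\Ss^3$, collapsing the cyclic plumbing pattern to an open chain. After this collapse, the $l-2$ remaining plumbed bands can either be isotoped away as unknotted/unlinked components or arranged into a standard position that coincides with the corresponding smoothing of $H_d$. The result is that both $L_0^{\mathrm{cyc}}$ and $L_0^{H}$ are isotopic to the same canonical link, determined only by $d$, not by $l$.

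The main obstacle is carrying out this isotopy argument cleanly: one must verify that after smoothing, the global cyclic topology collapses in the claimed way and that no $l$-dependent factors survive. This is a careful picture-based argument in the fibred link diagram, but once it is done the skein bookkeeping and telescoping give the lemma directly.
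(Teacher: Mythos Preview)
Your strategy and the paper's diverge at the choice of crossing. The paper does not try to pass from $L_{d}$ to $L_{d+2}$ by one skein move at a ``plumbing--order'' crossing. Instead it applies the skein relation at a \emph{twist} crossing of one of the $l$ Hopf bands: the crossing change untwists that band, and the smoothing $L_0$ cuts the band, turning the cycle of plumbed bands into a path. The crucial point is that for a plumbing along a tree the link is independent of the plumbing order, so the $L_0$ term is the \emph{same} for $L_d$ and for $L_{d'}$ and hence cancels in the difference $\Delta_{d,l}-\Delta_{d',l}$. Repeating this for all $l$ bands untwists everything; the fully untwisted link is precisely~$H_d$ (this is the half--twist cancellation argument in the paper). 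That is why the $l$--dependence disappears: not because one smoothing is miraculously $l$--independent, but because at each of the $l$ steps the $L_0$ contribution is shared between the two flow differences and drops out.

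Your proposal has a genuine gap at the ``crux'' step. After a single smoothing at a plumbing crossing, the link $L_0^{\mathrm{cyc}}$ still carries all of the twisted Hopf bands; these are neither unknotted nor unlinked, and there is no isotopy that removes them. Its Alexander polynomial is therefore $l$--dependent, so the identification $\Delta_{L_0^{\mathrm{cyc}}}=\Delta_{L_0^{H}}$ (the right--hand side being manifestly independent of~$l$) cannot hold as stated. Indeed, from Proposition~\ref{alexdiff} one sees that the difference $\Delta_{d,l}-\Delta_{d+2,l}$ equals $(\sqrt{t}-1/\sqrt{t})(\sqrt{t}^{\,d+1}-\sqrt{t}^{\,-(d+1)})$; if this came from a single skein move, the smoothed link would have Alexander polynomial $\sqrt{t}^{\,d+1}-\sqrt{t}^{\,-(d+1)}$, and you would have to exhibit an isotopy from $L_0^{\mathrm{cyc}}$ to such a link for every~$l$. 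Your sketch does not do this, and the assertion that the remaining bands ``can be isotoped away'' is false. A secondary issue is that the passage from $L_d$ to $L_{d+2}$ by swapping one plumbing order is not obviously a \emph{single} crossing change in the diagram; this also needs justification. The telescoping reduction to $d'=d+2$ is fine, but it is unnecessary in the paper's argument, which handles arbitrary $d,d'$ simultaneously.
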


\begin{proof}
Let~$L_{d}$ and~$L_{d'}$ be two links representing flow differences~$d$ and~$d'$, respectively, on a cycle of length~$l=p+q=p'=q'$,
consisting of~$l$ Hopf bands plumbed to a closed standard disc. We consider diagrams of~$L_{d}$ and~$L_{d'}$ as described in Section~\ref{rep_sec} and Figure~\ref{flowdiff2},
and apply the skein relation to a crossing of a twist of one of the plumbed bands. The change from a positive crossing to a negative crossing or vice-versa manifestly untwists the band.
The smoothing of the crossing as in the link~$L_0$ of the skein relation cuts the band. The resulting link is a plumbing of Hopf bands along a path.
In this case, whether a band passes over another one or vice-versa does not change the link up to isotopy. In particular, the~$L_0$-terms in the skein relation for~$L_{d}$ and~$L_{d'}$ agree.
Hence,~$\Delta_{d,l}-\Delta_{d',l}$ equals the difference of the Alexander polynomials of the links~$L_{d}$ and~$L_{d'}$, but with one Hopf band untwisted.
This argument can be repeated for each of the Hopf bands, which finally yields~$\Delta_{d,l}-\Delta_{d',l}=\Delta_{H_{d}}-\Delta_{H_{d'}}$,
since the link~$L_{d}$ with all Hopf bands untwisted is exactly the link~$H_{d}$.
Indeed, the link~$L_{d}$ with all Hopf bands untwisted can be divided into~$p$ sectors that resemble a positive half-twist on three strands and~$q$ sectors that resemble a negative half-twist on three strands,
with the orientation of the middle strand reversed, compare with Figure~\ref{sectorbraid}.
\begin{figure}[h]
\begin{center}
\def\svgwidth{360pt}
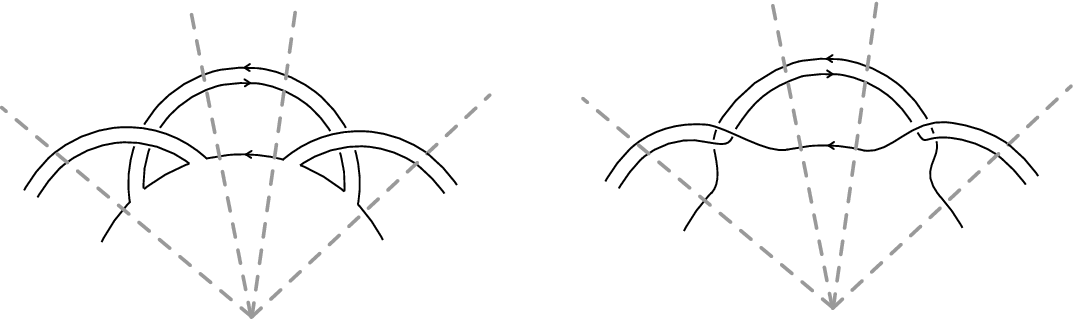
\caption{A link isotopy supported in bounded sectors.}
\label{sectorbraid}
\end{center}
\end{figure}
Since a positive and a negative half-twist cancel each other (which is very well perceivable in Figure~\ref{sectorbraid} on the right), we are left with~$d=p-q$ positive half-twists, that is, the link~$H_{d}$.
\end{proof}

\begin{lem}
\label{Hr}
For even~$d$, we have
$$\Delta_{H_d}=\left(\sqrt{t}-\frac{1}{\sqrt{t}}\right)\left(\Delta_{T_{2,d}}-2\sum_{i=1}^{\frac{d}{2}} \Delta_{T_{2,2i}}\right),$$
where~$T_{2,2i}$ denotes the~$(2,2i)$-torus link.
\end{lem}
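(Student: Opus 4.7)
The structure of the claimed identity --- a common factor $\sqrt{t}-1/\sqrt{t}$ and a sum ranging over all $(2,2i)$-torus links with $i\leq d/2$ --- strongly suggests an iterated application of the skein relation~\eqref{SR} to crossings of the braid building block, reducing $H_d$ to $H_{d-2}$ while peeling off a torus link summand at each stage.

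My plan is to proceed by induction on the even integer $d$. For the base case $d=2$, I would verify the formula by direct computation from the diagram of $H_2$: one or two applications of~\eqref{SR} to its crossings, combined with $\Delta_{\text{unknot}}=1$ and the known value of $\Delta_{T_{2,2}}$, suffice to check the identity.

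For the inductive step I would apply the skein relation at a well-chosen crossing of the topmost building block of $H_d$. Here $L_+=H_d$, and the goal is to arrange that the crossing-changed link $L_-$ simplifies, using the cancellation of adjacent positive and negative half-twists visible in Figure~\ref{sectorbraid}, to a link whose Alexander polynomial is expressible in terms of $\Delta_{H_{d-2}}$ and $\Delta_{T_{2,d-2}}$. The orientation-preserving smoothing $L_0$, meanwhile, should be identifiable by a direct isotopy of the resulting braid as the torus link $T_{2,d}$. A second application of~\eqref{SR}, at the symmetric crossing of the same building block, would yield the companion $T_{2,d}$ contribution responsible for the factor of $-2$ in front of the summation. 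Combining these two skein steps then gives a recursion of the form
\[
\Delta_{H_d}-\Delta_{H_{d-2}} = \left(\sqrt{t}-\tfrac{1}{\sqrt{t}}\right)\bigl(\Delta_{T_{2,d}} - \Delta_{T_{2,d-2}} - 2\Delta_{T_{2,d}}\bigr),
\]
which, by the induction hypothesis, collapses to the claimed formula for $\Delta_{H_d}$.

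The principal obstacle will be the careful bookkeeping of the links $L_-$ and $L_0$ up to isotopy, especially because the middle strand of the building block carries the reversed orientation: this reversal dictates whether a smoothing yields a torus link $T_{2,2i}$ or a connected sum with extra components, and it also determines the sign with which each torus link appears in the skein expansion. A secondary subtlety is the normalisation of the Alexander polynomial in~\eqref{SR} as a Laurent polynomial in $\sqrt{t}$; one must verify that the $\sqrt{t}$-powers coming from the fibre genera in the reduction from $H_d$ to $H_{d-2}$ are consistent with the powers appearing in $\Delta_{T_{2,2i}}$, so that the telescoping sum closes up correctly.
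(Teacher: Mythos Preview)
Your inductive target, the recursion
\[
\Delta_{H_d}-\Delta_{H_{d-2}} = -\Bigl(\sqrt{t}-\tfrac{1}{\sqrt{t}}\Bigr)\bigl(\Delta_{T_{2,d}}+\Delta_{T_{2,d-2}}\bigr),
\]
is exactly what the claimed formula predicts, so aiming an induction at it is a reasonable strategy. The mechanism you sketch for reaching it, however, does not work as stated. Each building block of~$H_d$ is a half-twist on three strands and carries three crossings, not one; a single crossing change therefore does not convert such a block into its inverse, and the cancellation of adjacent opposite half-twists from Figure~\ref{sectorbraid} --- which cancels \emph{entire} blocks of opposite sign --- is simply unavailable after one skein step. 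Your identification of the smoothed link~$L_0$ with~$T_{2,d}$ is likewise unsupported: smoothing a crossing that involves the middle strand merges that strand with an outer one, and the resulting link does not carry~$d$ full twists between two components. Thus neither the~$L_-$ nor the~$L_0$ branch of your proposed first skein step lands where you want it to.

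The paper bypasses this by dropping the reduction to~$H_{d-2}$ entirely. It applies the skein relation successively at \emph{all}~$d$ crossings where the middle strand passes under an outer strand, working from top to bottom. After all~$d$ crossing changes the middle strand becomes a split component, so the Alexander polynomial of the end result vanishes and the whole of~$\Delta_{H_d}$ is carried by the accumulated~$L_0$ terms, each with coefficient~$-(\sqrt{t}-1/\sqrt{t})$. The key topological identification is that after~$k$ crossing changes the~$(k{+}1)$st smoothing is isotopic to~$T_{2,k}$ when~$k$ is even and to~$T_{2,k+1}$ when~$k$ is odd; hence each~$T_{2,2i}$ with~$0<2i<d$ appears twice and~$T_{2,d}$ once, which after a trivial rewriting gives the formula. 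This is cleaner bookkeeping than an induction and avoids having to recognise~$H_{d-2}$ inside a partially modified~$H_d$.
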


\begin{proof}
The idea is to subsequently use the skein relation of the Alexander polynomial on all crossings where the middle strand of~$H_d$ passes below an other strand,
starting from the highest such crossing and proceeding to the lowest.
This allows for a computation after finitely many steps until the middle strand corresponds to a split component.
The crossing changes in the skein relation simplifies the linking of the middle strand with the other strands. This is depicted in Figure~\ref{Hrskein1}.
\begin{figure}[h]
\begin{center}
\def\svgwidth{240pt}
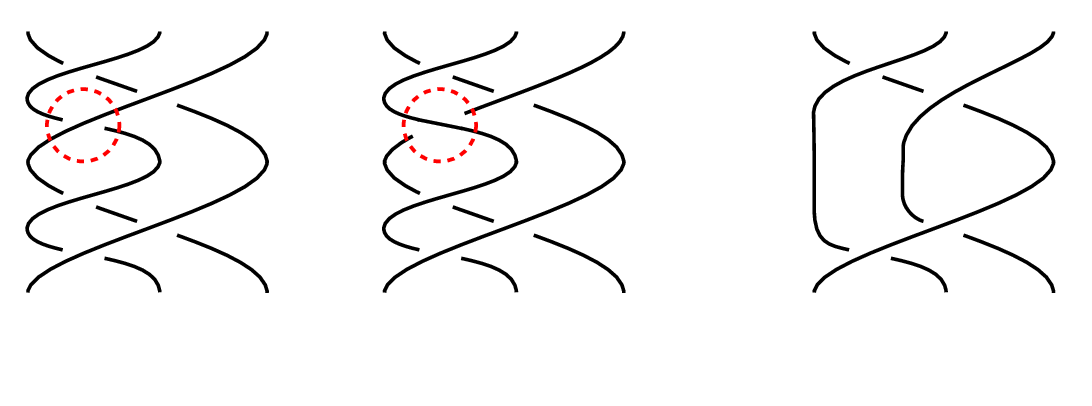
\caption{}
\label{Hrskein1}
\end{center}
\end{figure}
We now show that the~$L_0$-smoothings accumulate~$\Delta_{T_{2,2i}}$-summands with a coefficient~$-\left(\sqrt{t}-\frac{1}{\sqrt{t}}\right)$.
Assume that we have already changed the highest~$k\ge0$ crossings where the middle strand of~$H_d$ passes below an other strand.
We now describe what happens when we smooth the~$k+1$st crossing as in the~$L_0$-part of the skein relation.
Explicitly drawing the diagrams reveals that if~$k$ is even, we obtain a torus link~$T_{2,k}$ and if~$k$ is odd, we obtain a torus link~$T_{2,k+1}$.
Here, we also recall that~$d=p-q$ is even.
This is depicted in Figure~\ref{Hrskein2} for~$d=6$ and~$k=3$.
\begin{figure}[h]
\begin{center}
\def\svgwidth{250pt}
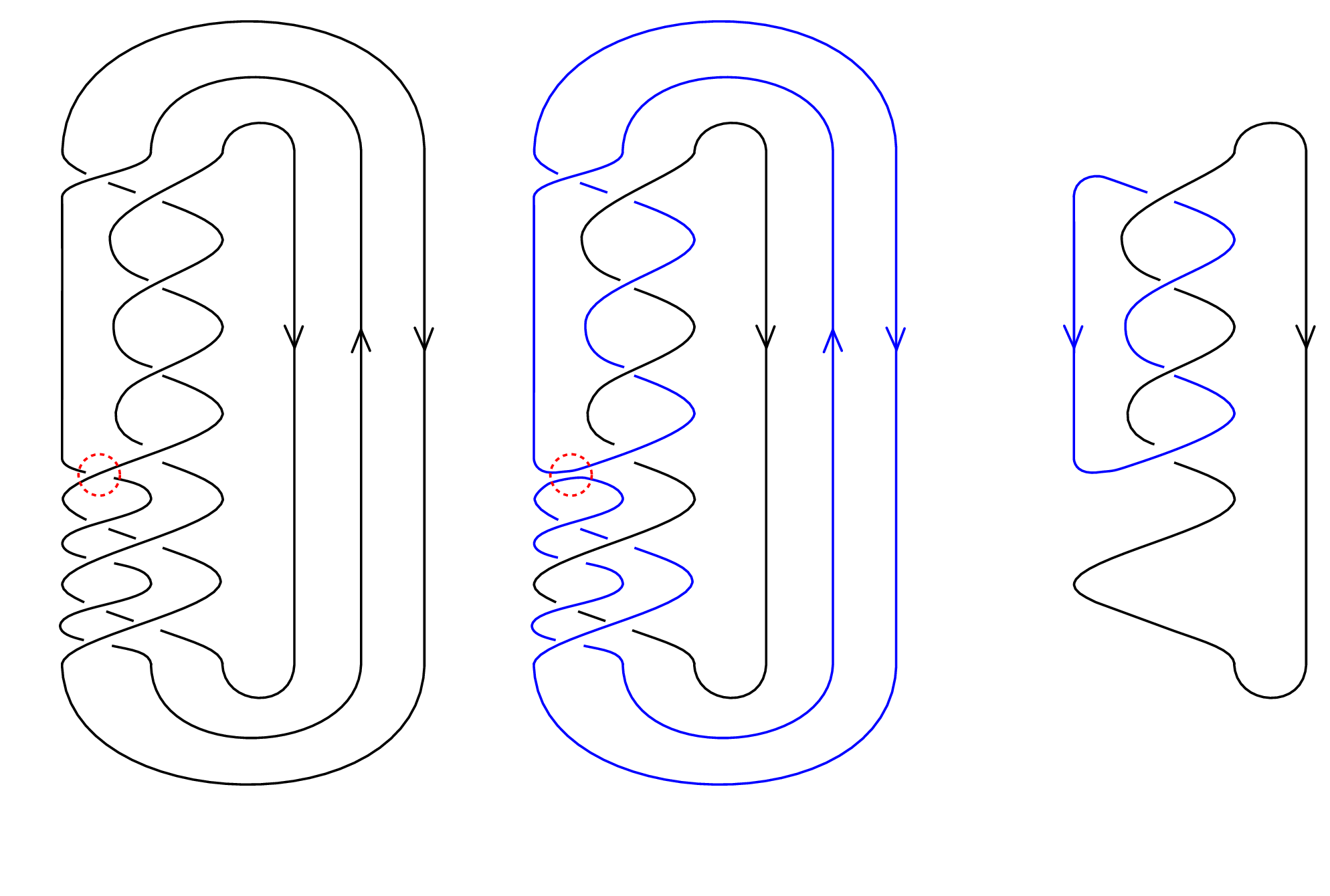
\caption{}
\label{Hrskein2}
\end{center}
\end{figure}
In the diagram for~$H_d$, there are~$d$ undercrossings of the middle strand. After we have changed all the crossings, the link is split and has Alexander polynomial~$0$.
Hence, the Alexander polynomial of~$H_d$ is the sum over all~$d$ undercrossings of the Alexander polynomial of the torus link obtained by the corresponding~$L_0$-smoothing,
as described above,
with a coefficient~$$-\left(\sqrt{t}-\frac{1}{\sqrt{t}}\right).$$
This yields the desired result.
\end{proof}

The following lemma is a standard fact on the Alexander polynomial of torus links. Using the skein relation for the Alexander polynomial, its verification is straightforward.

\begin{lem}
\label{torusalex}
$$\Delta_{T_{2,2i}}=\sum_{j=0}^{i-1}(-1)^{j}\left(\sqrt{t}^{\ 2i-1-2j}-\sqrt{t}^{\ -(2i-1-2j)}\right).$$
\end{lem}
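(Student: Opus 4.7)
The plan is to apply the skein relation to a single crossing in the standard two-strand braid diagram of $T_{2,2i}$ and to induct on $i$. Realise $T_{2,n}$ as the closure of $\sigma_1^n$. In the skein relation, the $L_-$ term replaces one positive crossing by a negative one, which cancels an adjacent positive crossing, reducing the braid to $\sigma_1^{2i-2}$ and hence to $T_{2,2i-2}$. The $L_0$ term, an orientation-preserving smoothing, merges the two components of $T_{2,2i}$ into a single component and yields the torus knot $T_{2,2i-1}$. This gives
$$\Delta_{T_{2,2i}} \;=\; \Delta_{T_{2,2i-2}} + \left(\sqrt{t} - \tfrac{1}{\sqrt{t}}\right)\Delta_{T_{2,2i-1}}.$$
The same argument applied at a crossing of $T_{2,2i-1}$ produces the companion recursion
$$\Delta_{T_{2,2i-1}} \;=\; \Delta_{T_{2,2i-3}} + \left(\sqrt{t} - \tfrac{1}{\sqrt{t}}\right)\Delta_{T_{2,2i-2}}.$$

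Together with the base values $\Delta_{T_{2,0}} = 0$ (the two-component unlink) and $\Delta_{T_{2,1}} = 1$ (the unknot), these two recursions determine $\Delta_{T_{2,n}}$ for all $n \ge 0$. I would then prove the claimed formula by parallel induction on $i$, establishing simultaneously that
$$\Delta_{T_{2,2i-1}} \;=\; \sum_{j=0}^{2i-2}(-1)^{j}\, t^{\,i-1-j}.$$
The base case $i=1$ is immediate: the even recursion applied to $\Delta_{T_{2,0}} = 0$ and $\Delta_{T_{2,1}} = 1$ gives $\Delta_{T_{2,2}} = \sqrt{t} - 1/\sqrt{t}$, matching both formulas.

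For the inductive step, one substitutes the two formulas into the right-hand side of each recursion. Multiplying $\Delta_{T_{2,2i-1}}$ by $\sqrt{t} - 1/\sqrt{t}$ splits each term into two terms of neighbouring half-integer exponent, and after collecting like powers and adding $\Delta_{T_{2,2i-2}}$ the sum telescopes into the desired formula for $\Delta_{T_{2,2i}}$; the odd case is handled symmetrically. The only real obstacle is the bookkeeping of these index shifts, but the computation reduces to manipulating two elementary alternating sums and is entirely routine, requiring no input beyond the skein relation and the two base cases.
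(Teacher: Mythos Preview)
Your proposal is correct and follows exactly the route the paper indicates: the paper merely states that ``using the skein relation for the Alexander polynomial, its verification is straightforward'' and gives no further details, and your inductive argument via the recursion $\Delta_{T_{2,n}} = \Delta_{T_{2,n-2}} + (\sqrt{t}-1/\sqrt{t})\Delta_{T_{2,n-1}}$ with base cases $\Delta_{T_{2,0}}=0$, $\Delta_{T_{2,1}}=1$ is precisely that straightforward verification. Tracking the odd Alexander polynomials $\Delta_{T_{2,2i-1}}$ in parallel is the natural way to run the induction, and the telescoping you describe goes through as claimed.
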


\begin{proof}[Proof of Proposition~\ref{alexdiff}]
By Lemma~\ref{diffreduction}, we have
$$\Delta_{d,l}-\Delta_{d+2,l}=\Delta_{H_{d}}-\Delta_{H_{d+2}}.$$
On the other hand, using first Lemma~\ref{Hr} and then Lemma~\ref{torusalex}, we obtain
\begin{align*}
\Delta_{H_{d}}-\Delta_{H_{d+2}} &= \left(\sqrt{t}-\frac{1}{\sqrt{t}}\right)\left(\Delta_{T_{2,d+2}}+\Delta_{T_{2,d}}\right) \\
&= \left(\sqrt{t}-\frac{1}{\sqrt{t}}\right)\left(\sqrt{t}^{\ d+1}-\sqrt{t}^{\ -(d+1)}\right),
\end{align*}
which is what we wanted to show.
\end{proof}

\begin{lem}
\label{leadingcoeff}
The leading coefficient of~$\Delta_{d,l}$ is~$+1$.
\end{lem}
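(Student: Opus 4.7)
The plan is to combine Proposition~\ref{alexdiff} with the general fibred-link interpretation of the Alexander polynomial to reduce the claim to a single base case. Recall that for any fibred link $L$, the skein-defined Alexander polynomial $\Delta_L$ coincides, up to a sign $\epsilon \in \{+1,-1\}$, with $\det(tI - M_\ast)/\sqrt{t}^{\,b_1(\Sigma)}$, where $M_\ast$ is the monodromy action on the first homology of the fibre surface~$\Sigma$. In our setting $b_1(\Sigma) = l$, and the monodromy is a product of $l$ Dehn twists on an oriented surface, each acting on $H_1$ as a symplectic transvection of determinant~$+1$. Hence $\det(M_\ast) = 1$, the characteristic polynomial $\det(tI - M_\ast)$ is monic of degree~$l$, and the coefficient of $\sqrt{t}^{\,l}$ in $\Delta_{d,l}$ equals~$\epsilon$. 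The lemma therefore reduces to showing $\epsilon = +1$.

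Next I would use Proposition~\ref{alexdiff} to show that this leading coefficient is independent of~$d$. The proposition gives
\[
\Delta_{d,l} - \Delta_{d+2,l} = \sqrt{t}^{\,d+2} - \sqrt{t}^{\,d} - \sqrt{t}^{\,-d} + \sqrt{t}^{\,-(d+2)},
\]
whose highest power of $\sqrt{t}$ is $\sqrt{t}^{\,d+2}$. Whenever $d + 2 < l$ this is strictly less than $\sqrt{t}^{\,l}$, so the coefficients of $\sqrt{t}^{\,l}$ on the two sides must cancel; in other words, $\Delta_{d,l}$ and $\Delta_{d+2,l}$ share the same leading coefficient. Iterating, the coefficient at $\sqrt{t}^{\,l}$ is the same for every admissible flow difference~$d$ (of the appropriate parity, with $d \le l-2$), so it suffices to verify $\epsilon = +1$ in a single base case for each cycle length~$l$.

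For the base case I would take $d = l - 2$, where the fibred link arises from plumbing $l-1$ Hopf bands of one sign and a single Hopf band of the opposite sign to the standard disc. Here the monodromy matrix admits an explicit description as a product of $l$ elementary Dehn twist matrices on $H_1$ (in the spirit of Lemma~\ref{companion}), and one can compute $\det(tI - M_\ast)$ directly and compare with the Alexander polynomial obtained by applying the skein relation to the crossings produced by the lone Hopf band of opposite sign, much as in the proof of Proposition~\ref{alexdiff}. The main obstacle is making the base case uniform in~$l$. I expect the cleanest route is a secondary induction on the number of plumbed Hopf bands, starting from the unknot (for which $\Delta = 1$ has leading coefficient~$+1$) and tracking how the leading coefficient evolves through each Hopf-band plumbing via one further application of the skein relation; each such step preserves the sign convention, forcing $\epsilon = +1$ throughout.
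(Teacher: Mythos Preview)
Your first two steps form a clean alternative reduction that the paper does not use. Observing that the leading coefficient of~$\Delta_{d,l}$ at~$\sqrt{t}^{\,l}$ is~$\pm 1$ (from the fibred-link interpretation) and then invoking Proposition~\ref{alexdiff} to see that this sign is independent of~$d$ (since the difference~$\Delta_{d,l}-\Delta_{d+2,l}$ has top degree~$d+2\le l-2<l$) correctly reduces the lemma to one base case per cycle length~$l$. The paper instead expands~$\Delta_{d,l}$ directly via the skein relations of Lemma~\ref{diffreduction} into~$\Delta_{H_d}$ plus Alexander polynomials of Hopf-band plumbings along paths, identifies which summand contributes the top term, and runs a recursion on path length.

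The problem lies in your treatment of the base case. Your description of the link for~$d=l-2$ is wrong: in the fibred-link realisation of the even cycle there are always exactly~$l/2$ positive and~$l/2$ negative Hopf bands, plumbed in alternating sign around the central disc (see Section~\ref{rep_sec} and Figure~\ref{flowdiff2}). What varies with~$d$ is not the count of bands of each sign but the over/under pattern at the plumbing squares, which encodes the twist order. There is no admissible~$d$ for which the link has ``$l-1$ Hopf bands of one sign and a single Hopf band of the opposite sign,'' so the explicit computation you sketch (and the comparison with Lemma~\ref{companion}) does not apply. Your fallback ``secondary induction on the number of plumbed Hopf bands'' is likewise not well-posed as stated: adding one band to the cycle realisation changes both~$l$ and the over/under pattern, and the effect of a single skein step on the leading sign depends on the sign of the band and on where it is plumbed. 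Making this precise essentially forces you into the path recursion the paper already carries out, so the reduction via Proposition~\ref{alexdiff}, while elegant, does not in the end save you from that computation.
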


\begin{proof}
Using the exact same skein relations as in the proof of Lemma~\ref{diffreduction},
we have that~$\Delta_{d,l}$ is a sum of~$\Delta_{H_{d}}$ and summands of Alexander polynomials of plumbings of Hopf bands of alternating kind along a path, with varying coefficients from the skein relation.
A careful inspection reveals that the leading coefficient of~$\Delta_{d,l}$ equals the leading coefficient of the Alexander polynomial of the longest such path starting and ending with a negative Hopf band.
With a recursion on the length of such a path, one can show that this leading coefficient is~$+1$.
\end{proof}

We will show that for the enriched cycle, the dilatation is a monotonic function of the absolute value of the flow difference.
In particular, this implies that for an odd enriched cycle of length~$l$,
the minimal Penner dilatation is obtained by the example with flow difference~$1$.

\begin{proof}[Proof of Lemma~\ref{panminimum}]
We will compare the dilatations of Penner mapping classes obtained by twisting along curves which intersect like an enriched odd cycle, when we vary the flow difference associated with the order of twisting.
A concrete surface (of genus~$7$, that is,~$l=5$) for which we can build such an example is shown in Figure~\ref{nonorientable_examples} on the right.
We first lift the mapping class to the double cover orienting the surface. By doing this, we double the length of the cycle and the flow difference.
Furthermore, there are now two extra vertices connecting to the cycle at opposite ends, one corresponding to a curve along which we twist positively
and one corresponding to a curve along which we twist negatively. This lifted mapping class has the same dilatation as the Penner mapping class we started with.

We find a fibred link representative of the lifted mapping class by taking the usual representative for the cycle and plumbing two extra Hopf bands~$H_1$ and~$H_2$,
such as in Figure~\ref{enrichedskein} in the top middle.
\begin{figure}[h]
\begin{center}
\def\svgwidth{300pt}
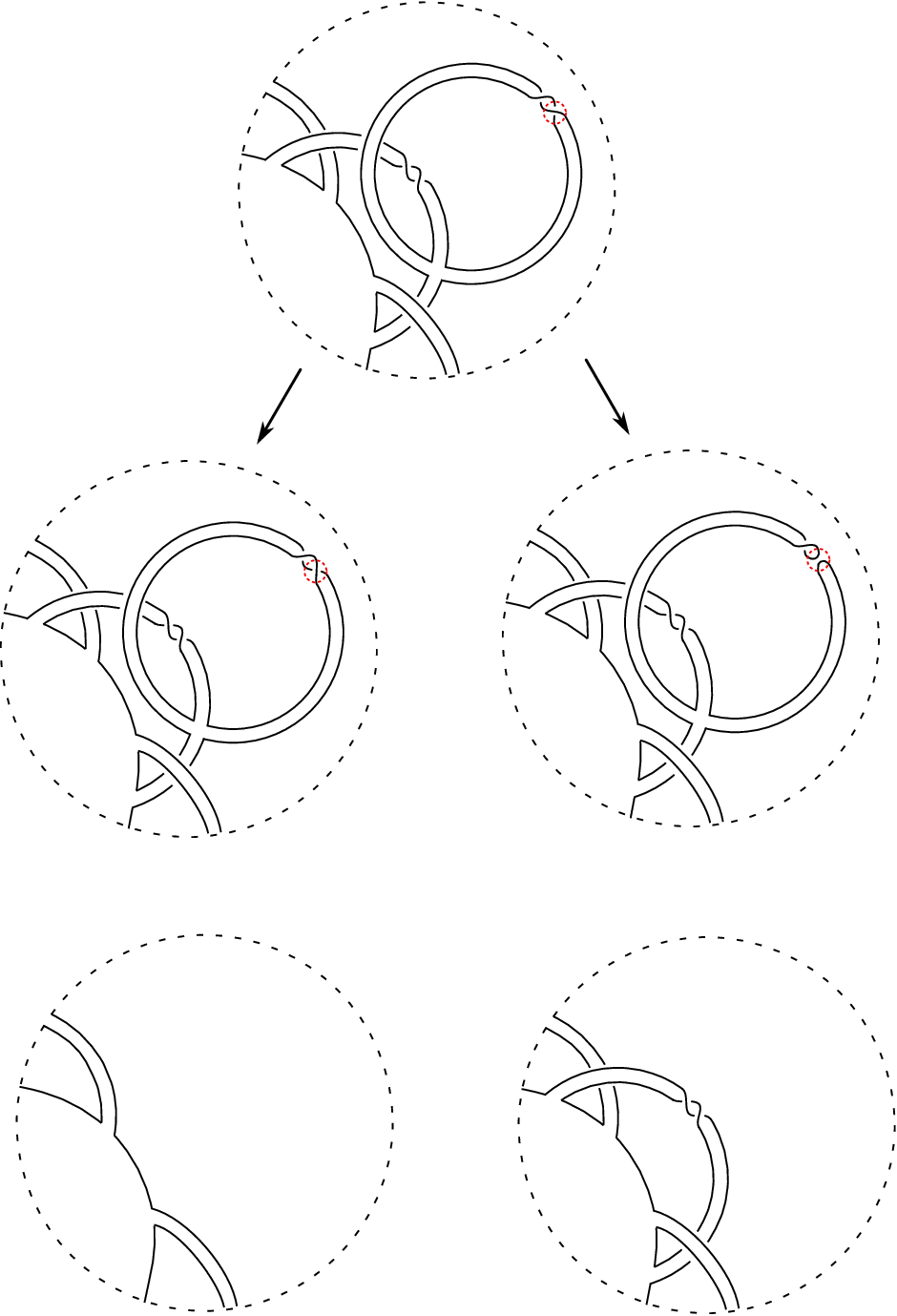
\caption{The links appearing in the skein relation~\eqref{SR} for the fibred link representations of enriched cycles, compare also with Figure~\ref{skein}.
The crossing used in the skein relation is one of the crossings of the band~$H_1$.}
\label{enrichedskein}
\end{center}
\end{figure}
We assume~$H_1$ to be a positive Hopf band and~$H_2$ to be a negative Hopf band.
Let~$L(d,l)$ be the enriched fibred link representative, where~$l$ is the even length of the cycle, and~$d\ge0$ is the flow difference.

We use the skein relation~\eqref{SR} for the enriched fibred link representative~$L(d,l)=L(d,l)_+$ at a positive crossing of~$H_1$. This yields
$$\Delta_{L(d,l)} = \Delta_{L(d,l)_-} + \left(\sqrt{t}-\frac{1}{\sqrt{t}}\right)\Delta_{L(d,l)_0}.$$

The links~$L_+ = {L(d,l)} $,~$L_- ={L(d,l)_-} $ and~$L_0={L(d,l)_0} $ appearing in the skein relation are depicted in Figure~\ref{enrichedskein}.
We note that~$L_-$ is given by a plumbing of Hopf bands along a tree. In particular, the Alexander polynomial~$\Delta_{L_-}$ does not depend on the order of twisting,
and, in particular, does not depend on the flow difference.
From this, we deduce~$$\Delta_{L(d,l)} - \Delta_{L(d+2,l)} = \left(\sqrt{t}-\frac{1}{\sqrt{t}}\right)\left(\Delta_{L(d,l)_0} -\Delta_{L(d+2,l)_0} \right).$$

The links~$L=L(d,l)_0$ and~$L'=L(d+2,l)_0$ are fibred link representatives of enriched cycles of even length~$l$ and flow difference~$d\ge0$ and~$d+2$, respectively.
Furthermore, the extra band~$H_2$ corresponding to the extra vertex which is negative.
We can get rid of the band~$H_2$ by another use of the skein relation, similarly to the skein relation we used to get rid of~$H_1$,
but we have to take care of the change in sign of the band.
Using the skein relation on a negative crossing of~$H_2$, which is negative, we obtain
$$\Delta_{L} = \Delta_{L_+} - \left(\sqrt{t}-\frac{1}{\sqrt{t}}\right)\Delta_{L_0}$$
and
$$\Delta_{L'} = \Delta_{L'_+} - \left(\sqrt{t}-\frac{1}{\sqrt{t}}\right)\Delta_{L'_0},$$
respectively.
We note that again, the links~$L_+$ and~$L'_+$ are given by a plumbing of Hopf bands along a forest. In particular, we have~$\Delta_{L_+}=\Delta_{L'_+}$,
and this yields
$$\Delta_{L}-\Delta_{L'} = -\left(\sqrt{t}-\frac{1}{\sqrt{t}}\right)\left(\Delta_{L_0}-\Delta_{L'_0}\right),$$
where~$\Delta_{L_0}$ equals~$\Delta_{d,l}$ and~$\Delta_{L'_0}$ equals~$\Delta_{d+2,l}$.
Applying Proposition~\ref{alexdiff}, this gives
\begin{align*}
\Delta_{L(l,d)} - \Delta_{L(l,d+2)} &= -\left(\sqrt{t}-\frac{1}{\sqrt{t}}\right)^2\left(\Delta_{L_0}-\Delta_{L'_0}\right)\\
&= -\left(\sqrt{t}-\frac{1}{\sqrt{t}}\right)^3\left(\sqrt{t}^{\ d+1}-\sqrt{t}^{\ -(d+1)}\right).
\end{align*}

Now, let~$\chi_{d,l}$ and~$\chi_{d+2,l}$ be the characteristic polynomials of the action induced on the first homology by the monodromies of
the fibred link representatives~$L(d+2,l)$ and~$L(d+2,l)$, respectively.

We note that the leading coefficient of both~$\Delta_{L(d,l)}$ and~$\Delta_{L(d+2,l)}$ is~$-1$.
Indeed, we have used the skein relation on one positive and one negative crossing to go from~$\Delta_{L(d,l)}$ and~$\Delta_{L(d+2,l)}$
to~$\Delta_{d,l}$ and~$\Delta_{d+2,l}$, respectively, which have leading coefficient~$+1$. Tracking the sign of the leading coefficient
through the two skein relations yields that it switches. In particular, the leading coefficients of~$\Delta_{L(d,l)}$ and~$\Delta_{L(d+2,l)}$ are~$-1$,
since the leading coefficients of~$\Delta_{d,l}$ and~$\Delta_{d+2,l}$ are~$+1$ by Lemma~\ref{leadingcoeff}.
This means that to normalise the Alexander polynomials~$\Delta_{L(d,l)}$ and~$\Delta_{L(d+2,l)}$ to the characteristic polynomials~$\chi_{d,l}$ and~$\chi_{d+2,l}$
of the action induced on first homology by the monodromies of~$L(d,l)$ and~$L(d+2,l)$, respectively,
we have to multiply by~$-(\sqrt{t})^{l+2}$. This yields a difference of
\begin{align*}
\chi_{d,l} -\chi_{d+2,l} &=  \left(\sqrt{t}\right)^{l+2}\left(\sqrt{t}-\frac{1}{\sqrt{t}}\right)^3\left(\sqrt{t}^{\ d+1}-\sqrt{t}^{\ -(d+1)}\right)\\
&= \left(t-1\right)^3\left(t^{\frac{l+d}{2}}-t^\frac{l-d-2}{2}\right).
\end{align*}
Clearly, this difference is strictly positive for any real number~$t>1$.
In particular, when evaluated at real numbers strictly greater than~$1$,
the characteristic polynomial of the action corresponding to the flow difference~$d\ge0$ is strictly greater than the characteristic polynomial of the action corresponding to the flow difference~$d+2$.
This implies that the largest real root is strictly greater for the characteristic polynomial of the action corresponding to the flow difference~$d+2$, and thus proves the claim.
\end{proof}

\subsection{A monotonicity criterion and a proof of Lemma~\ref{pandecreasing}}
Let~$\phi$ be a mapping class obtained from Penner's construction using curves with intersection graph~$\Gamma$, and such that every curve gets twisted along exactly once.
We further assume~$\Gamma$ to have only simple edges.
The intersection graph~$\Gamma$ is equipped with an acyclic orientation given by the order in which the curves used in Penner's construction get twisted along.
In this context, the matrix~$\rho(\phi)$ associated with~$\phi$ in Penner's construction (Theorem~\ref{pennerconstruction_thm})
only depends on the intersection graph~$\Gamma$ and the acyclic orientation.
In particular, the same is true for the dilatation, and we consider both~$\rho(\phi)$ and~$\lambda(\phi)$
as a function of the intersection graph~$\Gamma$ with its acyclic orientation. We write~$\rho(\Gamma)$ and~$\lambda(\Gamma)$, respectively.

Let~$y\in\R^n$ with coefficients~$y_i$ (corresponding to the vertices~$v_i$ of~$\Gamma$)
be a Perron-Frobenius eigenvector of the matrix~$\rho(\Gamma)$.
This is a vector whose entries satisfy the following set of equations.
For each vertex~$v_i$ of~$\Gamma$, let~$v_{o_1},\dots,v_{o_r}$ be the vertices of~$\Gamma$ connected to~$v_i$ by an edge pointing away from~$v_i$.
Similarly, let~$v_{i_1},\dots,v_{i_q}$ be the vertices of~$\Gamma$ connected to~$v_i$ by an edge pointing towards~$v_i$.
Then the matrix~$\rho(\Gamma)$ defined in Penner's construction acts on an arbitrary vector~$x\in\R^n$ as follows:
\begin{align*}
\tag{P}\label{P}
\left(\rho(\phi)x\right)_i =\ &x_i \\ &+ x_{o_1}+\cdots+x_{o_r}\\&+\left(\rho(\phi)x\right)_{i_1}+\cdots+\left(\rho(\phi)x\right)_{i_q}.
\end{align*}

In other words, to a weight we add all the weights adjacent in the graph, and we do this to all the weights in the order given by the acyclic orientation.
In particular, the Perron-Frobenius eigenvector~$y$ of~$\rho(\Gamma)$ satisfies the equation
\begin{align*}
\tag{PF}\label{PF}
\lambda y_i =\ &y_i\\ &+y_{o_1}+\cdots+y_{o_r}\\ & + \lambda(y_{i_1}+\cdots+y_{i_q}).
\end{align*}

\begin{prop}
\label{localdecrease_prop}
Assume that locally around a vertex~$v_i$ the acyclic orientation of~$\Gamma$
looks like a source-sink path as in Figure~\ref{localdecrease} on the left.
Assume furthermore that the coefficient~$y_i$ of the Perron-Frobenius eigenvector~$y$ corresponding to~$v_i$ is smaller than
or equal to the coefficients~$y_{i-2}$ and~$y_{i+2}$ corresponding to the two outer vertices.
Then, locally prolonging the path by two vertices as shown in Figure~\ref{localdecrease} on the right yields an acyclically oriented graph~$\Gamma'$ with associated
Penner dilatation~$\lambda'(\Gamma)$ such that~$\lambda'(\Gamma)\le\lambda(\Gamma)$.
\begin{figure}[H]
\begin{center}
\def\svgwidth{230pt}
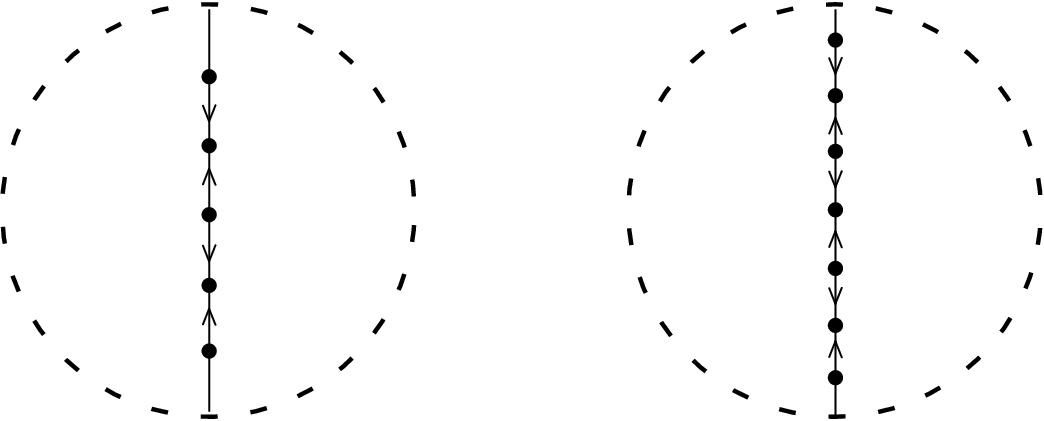
\caption{}
\label{localdecrease}
\end{center}
\end{figure}
\end{prop}

The following lemma is a standard description of the Perron-Frobenius eigenvalue of a Perron-Frobenius matrix.

\begin{lem}
\label{minmax}
Let~$A$ be a Perron-Frobenius matrix of size~$n\times n$, and let~$\lambda$ be its Perron-Frobenius eigenvalue.
Then $$\lambda=\mathrm{min}\left(\underset{x_i\ne0}{\mathrm{max}}\left(\frac{(Ax)_i}{x_i}\right)\right),$$
where the minimum is taken over all nonnegative vectors~$x\in\R^n\setminus\{0\}$.
\end{lem}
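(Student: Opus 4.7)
The plan is to prove the two inequalities $\mu^* \leq \lambda$ and $\lambda \leq \mu^*$, where $\mu^*$ denotes the right-hand side of the claimed identity. This is a standard Collatz--Wielandt type characterisation, and both directions follow from applying a Perron--Frobenius eigenvector (right for one direction, left for the other).

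For the upper bound, I would plug in $x = y$, where $y > 0$ is the right Perron--Frobenius eigenvector of~$A$. Since $Ay = \lambda y$ entrywise, every ratio $(Ay)_i / y_i$ equals $\lambda$, so the inner maximum in the right-hand side is exactly $\lambda$ for this choice of~$x$, and hence $\mu^* \leq \lambda$.

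For the lower bound, I would invoke the left Perron--Frobenius eigenvector $z > 0$ with $z^{T} A = \lambda z^{T}$. Given any nonnegative nonzero~$x$, denote by $\mu(x)$ the inner maximum appearing in the right-hand side. Adopting the standard convention that $a/0 = +\infty$ when $a > 0$, if some index $i$ satisfies $x_i = 0$ but $(Ax)_i > 0$, then $\mu(x) = +\infty \geq \lambda$ and there is nothing to prove; otherwise $Ax \leq \mu(x) \cdot x$ componentwise. Pairing with~$z$ then yields
$$\lambda (z^{T} x) \;=\; z^{T} A x \;\leq\; \mu(x)(z^{T} x),$$
and since $z > 0$ and $x$ is nonnegative and nonzero we have $z^{T} x > 0$, so $\lambda \leq \mu(x)$. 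Taking the infimum over all admissible~$x$ gives $\lambda \leq \mu^*$.

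The only subtle point is how to treat the ratio at indices where $x_i = 0$; the $+\infty$ convention above disposes of it cleanly. In any case, in the application to Proposition~\ref{localdecrease_prop} the relevant vector is a Perron--Frobenius eigenvector of an irreducible nonnegative matrix, which is strictly positive, so the issue never materialises and the classical argument outlined above is sufficient.
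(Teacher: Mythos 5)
The paper cites this lemma as a standard fact and provides no proof of its own, so there is no internal argument to compare against; your Collatz--Wielandt argument is correct and is the usual way to establish it. The upper bound via the right Perron eigenvector, and the lower bound via pairing with a strictly positive left Perron eigenvector~$z$ (so that $Ax\le\mu(x)x$ componentwise yields $\lambda z^{T}x=z^{T}Ax\le\mu(x)z^{T}x$ with $z^{T}x>0$), are both fine; irreducibility of~$A$, which is what ``Perron--Frobenius matrix'' should mean here, is exactly what guarantees the positivity of~$z$.

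You are also right to flag the zero-entry issue, and it is more than cosmetic: read literally, with the maximum taken only over indices~$i$ with $x_i\ne0$, the displayed formula is false. For instance, for the primitive $2\times 2$ matrix $A$ with $a_{11}=0$ and $a_{12}=a_{21}=a_{22}=1$, take $x=(1,0)^{T}$; then $Ax=(0,1)^{T}$, the inner maximum (over the single admissible index $i=1$) is~$0$, so the right-hand side is at most~$0$, while $\lambda=(1+\sqrt{5})/2$. Your convention $a/0=+\infty$ (equivalently, taking the maximum over all~$i$) is exactly the repair, and your observation that the test vectors in the paper's applications are strictly positive --- they are built entrywise from a strictly positive Perron eigenvector --- confirms that the lemma is used safely there. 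So your proof is correct; the only caveat is that the lemma as typeset needs either your convention or a restriction to strictly positive~$x$.
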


\begin{proof}[Proof of Proposition~\ref{localdecrease_prop}]

Let~$\rho(\Gamma')$ be the matrix of size~$(n+2)\times(n+2)$ associated with the acyclically oriented graph~$\Gamma'$ by Penner's construction.
We will describe a nonnegative vector~$x\in\R^{n+2}$ such that for each coefficient~$x_i$, we have $$\frac{(\rho(\Gamma')x)_i}{x_i}\le\lambda(\Gamma).$$
It then follows from Lemma~\ref{minmax} that the Perron-Frobenius eigenvalue~$\lambda(\Gamma')$ associated with~$\Gamma'$ is bounded from above by~$\lambda(\Gamma)$.

Let~$x\in\R^{n+2}$ be the vector with entries as shown in Figure~\ref{localmove} on the right, where we assume
that all the entries corresponding to vertices outside the local picture are equal to the corresponding entry~$y_j$ of the Perron-Frobenius eigenvector~$y$ of~$\rho(\Gamma)$.

\begin{figure}[h]
\begin{center}
\def\svgwidth{300pt}
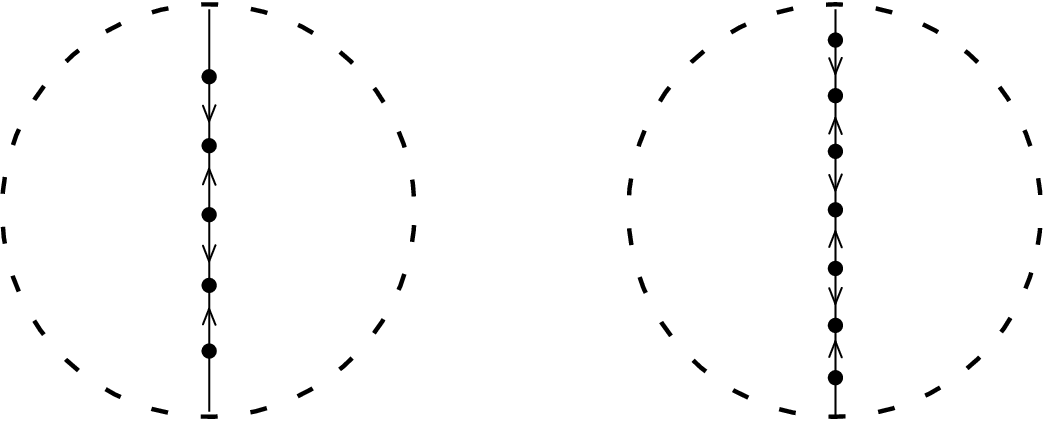
\caption{The graphs~$\Gamma$ on the left and the graph~$\Gamma'$ on the right, obtained by a local modification around the vertex~$v_i$.}
\label{localmove}
\end{center}
\end{figure}

Except for the five middle vertices, all the entries corresponding to the vertices on the right satisfy the exact same equations~\eqref{P}, and thus satisfy also the respective equations~\eqref{PF}.
In particular, the corresponding entries~$x_j$ satisfy $$\frac{(\rho(\Gamma')x)_j}{x_j}=\lambda.$$
We still have to show
\begin{align*}
\tag{$\ast$}\label{ast}
\frac{(\rho(\Gamma')x)_j}{x_j}\le\lambda
\end{align*}
for the entries~$x_{i-1}, x_i, x_{i+1}, x_{n+1}$ and~$x_{n+2}$ corresponding to the five middle vertices.
Using the equations~\eqref{P} and~\eqref{PF} as well as the assumption~$$y_i\le\mathrm{min}(y_{i-2},y_{i+2}),$$ we now verify this by direct computation.
We first note that
\begin{align*}
(\rho(\Gamma')x)_{n+1} &= x_{n+1} + x_{i-1}+x_i \\
&=y_i + y_{i-1} + \mathrm{min}(y_{i-1},y_{i+1})\\
&\le \lambda(\Gamma)y_i=\lambda(\Gamma)x_{n+1},
\end{align*}
which proves~\eqref{ast} for~$x_{n+1}$. The analogue computation for~$x_{n+2}$ also yields $$(\rho(\Gamma')x)_{n+2}\le \lambda(\Gamma)x_{n+2}.$$
Similarly, for~$x_{i-1}$, we have
\begin{align*}
(\rho(\Gamma')x)_{i-1} &= x_{i-1} + (\rho(\Gamma')x)_{n+1}+(\rho(\Gamma')x)_{i-2} \\
&= y_{i-1} + (\rho(\Gamma')x)_{n+1} + (\rho(\Gamma')x)_{i-2} \\
&\le y_{i-1} + \lambda(\Gamma)y_i + \lambda(\Gamma)y_{i-2} \\
&=\lambda(\Gamma)y_{i-1}=\lambda(\Gamma)x_{i-1},
\end{align*}
which proves~\eqref{ast} for~$x_{i-1}$. The analogue computation for~$x_{i+1}$ yields $$(\rho(\Gamma')x)_{i+1}\le \lambda(\Gamma)x_{i+1}.$$
Finally, for~$x_i$, we have
\begin{align*}
(\rho(\Gamma')x)_{i} &= x_{i} + (\rho(\Gamma')x)_{n+1}+(\rho(\Gamma')x)_{n+2} \\
&\le \mathrm{min}(y_{i-1},y_{i+1}) + 2\lambda(\Gamma)y_i \\
&\le \mathrm{min}(y_{i-1},y_{i+1}) + \lambda(\Gamma)y_i + \lambda(\Gamma)\mathrm{min}(y_{i-2},y_{i+2})\\
&\le \mathrm{min}(\lambda(\Gamma)y_{i-1}, \lambda(\Gamma)y_{i+1}) \\
&=\lambda(\Gamma)\mathrm{min}(y_{i-1},y_{i+1})=\lambda(\Gamma)x_i,
\end{align*}
which proves~\eqref{ast} for~$x_i$ and finishes the proof.
\end{proof}

We are now ready to show that the sequence of dilatations~$(\mu_l)$ of the Penner mapping classes associated with the enriched cycle of odd length~$l$ and flow difference~$1$ is
nowhere increasing.

\begin{proof}[Proof of Lemma~\ref{pandecreasing}]
For~$l=3,5,7,9,11,13$ we simply check the statement by hand (on a computer) and notice that~$\mu_{13}<6.13$.
The results of the calculation are given in Table~\ref{enrichedcyclecasestable}.
\begin{table}[h]
\begin{tabular}{| c | c |}
\hline
$l$ & $\mu_l$\\ \hline
3 & $\approx 6.996$ \\ \hline
5 & $\approx 6.452$ \\ \hline
7 & $\approx 6.277$ \\ \hline
9 & $\approx 6.194$ \\ \hline
11 & $\approx 6.148$ \\ \hline
13 & $\approx 6.120$ \\ \hline
\end{tabular}
\smallskip
\caption{Some values of~$\mu_l$.}
\label{enrichedcyclecasestable}
\end{table}
We now proceed by induction on the length~$l$ of the cycle.
Assume we have shown the statement up to cycles of length~$\le 2n-1$.
We want to show~$\mu_{2n-1}\ge\mu_{2n+1}$.
The idea is to apply Proposition~\ref{localdecrease_prop} to the sink whose corresponding entry of the Perron-Frobenius eigenvector is minimal.

Let~$P_{2n-1}$ be the enriched cycle of length~$2n-1$, acyclically oriented as in Figure~\ref{enriched}, where the edges which are not displayed are oriented in alternating fashion.
Clearly, the absolute value of the flow difference equals~$1$, so the associated Penner dilatation~$\Gamma(P_{2n-1})$ is~$\mu_{2n-1}$.
Let~$y\in\R^{2n}$ be the Perron-Frobenius eigenvector for the matrix~$\rho(P_{2n-1})$, where its~$i$th entry~$y_i$ corresponds to the vertex~$v_i$ of~$P_{2n-1}$.
\begin{figure}[h]
\begin{center}
\def\svgwidth{180pt}
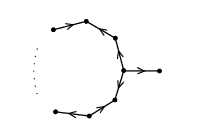
\caption{}
\label{enriched}
\end{center}
\end{figure}
We are interested in the minimal entry of~$y$. It is a direct observation that if there is an edge pointing from~$v_j$ to~$v_k$, then~$y_j\le y_k$.
This follows from the equation~\eqref{PF}.
In particular, the minimal entry of~$y$ corresponds to a source.

If the minimal entry~$y_i$ of~$y$ corresponds to~$v_i\ne v_1$, then we can apply Proposition~\ref{localdecrease_prop}.
This yields a Penner mapping class on the enriched cycle of length~$2n+1$ with the same flow difference and smaller dilatation.
In particular, we have~$\mu_{2n-1}\ge\mu_{2n+1}$ and we are done.

Now assume the minimal entry of~$y$ is~$y_1$. Consider Figure~\ref{H},
\begin{figure}[h]
\begin{center}
\def\svgwidth{180pt}
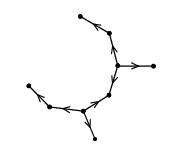
\caption{}
\label{H}
\end{center}
\end{figure}
which describes an acyclically oriented tree~$\Gamma$ and thus a Penner dilatation~$\lambda(\Gamma)$ and an associated matrix~$\rho(\Gamma)$.
Furthermore, Figure~\ref{H} describes a nonnegative vector~$x\in\R^9$, with entries given by the indicated vertex weights.
Using the equations~\eqref{P} and~\eqref{PF} and the assumption that~$y_1$ is minimal among the entries of~$y$,
one can show that, for any~$1\le j\le 9$,
\begin{align*}\tag{$\ast\ast$}\label{astast}
\frac{(\rho(\Gamma)x)_j}{x_j}\le\mu_{2n-1}<6.13.
\end{align*}
This can be verified very similar to the calculations in the proof of Lemma~\ref{pandecreasing}.
For all entries except the one with weight~$y_{2n-1}$, the inequality~\eqref{astast} follows very directly from a comparison with the
equation~\eqref{PF} for the corresponding entry of the Perron-Frobenius eigenvector~$y$.
To show~\eqref{astast} for the entry with weight~$y_{2n-1}$, it is necessary to make use of the assumption that~$y_1$ is smaller than~$y_{2n-2}$.

Proving~\eqref{astast} for all entries~$x_j$ of the vector~$x$ yields a contradiction,
since the spectral radius of~$\rho(\Gamma)$ can be calculated directly and is strictly larger than~$6.13$.
\end{proof}

\section{Odd genus minimal dilatations}
\label{oddgenus_section}

Let~$l$ be an odd natural number and let~$\psi_l$ be the mapping class arising from Penner's construction
using curves with an enriched~$l$-cycle~$P_l$ as their intersection graph, and with flow difference~$1$.
The dilatation of~$\psi_l$ equals~$\mu_l$. We will use the values of~$\mu_l$ calculated in Table~\ref{enrichedcyclecasestable}.
We are now ready to show that the mapping classes~$\psi_l$ minimise the dilatation among mapping classes
arising from Penner's construction on nonorientable surfaces of odd genus.
Note that we have to show this for genus greater than or equal to~$5$, since the genus~$3$ nonorientable closed surface does not admit pseudo-Anosov mapping classes.

\begin{thm}
\label{oddgenus}
The mapping class~$\psi_l$ minimises the dilatation among mapping classes arising from Penner's construction on the nonorientable closed surface of odd genus~$g=l+2$.
\end{thm}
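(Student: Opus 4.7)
The plan is to closely follow the case-analysis template of Theorem~\ref{evengenus}, but with the cycle theory of Section~\ref{cycle_section} (Lemma~\ref{bestflow} and Lemma~\ref{fromabove}) replaced by the enriched cycle theory of Section~\ref{enriched_section} (Lemma~\ref{panminimum} and Lemma~\ref{pandecreasing}). Given a Penner mapping class $\phi$ on $N_{l+2}$ with intersection graph $\Gamma$, I assume as before that every curve is twisted exactly once. The goal is to locate, inside $\Gamma$, an induced copy of an enriched cycle $P_j$ for some odd $j \le l$; monotonicity of the spectral radius under principal submatrices (the key ingredient in the proof of Lemma~\ref{treebound}) together with Lemma~\ref{panminimum} and Lemma~\ref{pandecreasing} then yields $\lambda(\phi) \ge \mu_j \ge \mu_l = \lambda(\psi_l)$.

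First I record the structural restrictions on $\Gamma$. By Lemma~\ref{notbipartite} the graph $\Gamma$ contains an odd cycle; by Lemma~\ref{genusbound} every induced odd cycle has length at most $l+1$, hence at most $l$; and since $l+2$ is odd, Corollary~\ref{fillingcyclecor} forbids $\Gamma$ from being a single odd cycle. When $\Gamma$ contains a double edge, I proceed as in Case~1 of the proof of Theorem~\ref{evengenus}: Lemma~\ref{treebound} applied to the tree with one double and one simple edge yields $\lambda(\phi) \ge (7+3\sqrt{5})/2 \approx 6.854$, which exceeds $\mu_l$ for $l \ge 5$ by Table~\ref{enrichedcyclecasestable}. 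The case $l=3$ with a double edge needs a refined argument, either a larger tree bound (which becomes available because the intersection graph filling $N_5$ must have at least four vertices) or direct inspection.

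Now I assume $\Gamma$ is simple, and let $C$ be a shortest induced odd cycle in $\Gamma$, of length $k \le l$. By Lemma~\ref{fillingcycle} a regular neighbourhood of $C$ is $N_{k+1}$ minus a disc, and $k+1 < l+2$, so some vertex $v \in \Gamma \setminus C$ is adjacent to $C$. The main step is to argue that $v$ can be chosen to be adjacent to exactly one vertex of $C$, identifying $C \cup \{v\}$ as an induced $P_k \subseteq \Gamma$: if $v$ were adjacent to two vertices of $C$ at distance $d$ along $C$, then the two induced cycles through $v$ have lengths $d+2$ and $k-d+2$, one of which is odd because $k$ is odd, and minimality of $k$ forces that odd cycle to have length $\ge k$. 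For $k \ge 5$ this leaves only $d \in \{1, 2\}$, and both can be ruled out by combining further parity restrictions with Lemma~\ref{genusbound}. Once an induced $P_k \subseteq \Gamma$ is produced, the monotonicity argument of Lemma~\ref{treebound} together with Lemma~\ref{panminimum} and Lemma~\ref{pandecreasing} conclude $\lambda(\phi) \ge \mu_k \ge \mu_l$.

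The hardest part will be the combinatorial case analysis governing how vertices outside $C$ attach to $C$, in particular ruling out configurations where no induced $P_j$ with $j \le l$ is available, and dealing with the multi-adjacent situation in which $v$ is adjacent to three or more vertices of $C$. I expect to resolve this by iterating the parity argument and the genus constraint, occasionally replacing $C$ with a different induced odd cycle if the current choice has awkward attaching patterns, and by clearing up the residual small-$l$ exceptions by direct numerical comparison against Table~\ref{enrichedcyclecasestable}.
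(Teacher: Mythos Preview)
Your three-case split is exactly the paper's, including the refinement for the double-edge case at $l=3$ (the paper handles this by exhibiting four specific small graphs, each with Penner dilatation at least~$7$).

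There is one overcomplication in your Case~2. You plan a combinatorial analysis to locate an \emph{induced} copy of~$P_j$, and you flag this as ``the hardest part''. It is unnecessary: the monotonicity argument behind Lemma~\ref{treebound} already works for arbitrary subgraphs, not just induced ones. Concretely, if $H$ is any subgraph of the intersection graph (on a subset of vertices and a subset of the edges among them), then $\lambda(\phi)$ dominates the Penner dilatation on $H$ with the inherited twist order, because the factors $(I+R_{c_i})$ dominate entrywise the corresponding factors built from the smaller edge set. So once you have a shortest odd cycle $C$ of length $k\le l$ (automatically induced, since double edges are excluded) and a single vertex $v\notin C$ adjacent to $C$ (guaranteed by Corollary~\ref{fillingcyclecor}), take $C$ together with \emph{one} edge from $v$ to $C$: that is already a copy of $P_k$ as a subgraph of $\Gamma$, and $\lambda(\phi)\ge \mu_k\ge \mu_l$ follows immediately from Lemmas~\ref{panminimum} and~\ref{pandecreasing}. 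The delicate attachment analysis (how many vertices of $C$ the vertex $v$ touches, the parity of the resulting cycles, what happens when $v$ hits three or more vertices) never enters. The paper's own wording in this case (``as an induced subgraph'') is slightly stronger than what the argument actually uses.
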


\begin{proof}
Let $N_{l+2}$ be the nonorientable closed surface of even genus~$l+2$. We know that there exists the mapping class~$\psi_l$ on~$N_{l+2}$, with dilatation~$\mu_l=\lambda(\psi_l)$.
Let~$\phi$ be any mapping class on~$N_{l+2}$ arising from Penner's construction, where we assume that every curve used for the construction of~$\phi$ gets twisted along exactly once.
Exactly as in the proof of Theorem~\ref{evengenus}, we distinguish cases depending on the intersection graph of the curves used in the construction of~$\phi$.

\emph{Case 1: the intersection graph contains a double edge.} We use the same argument as in Case~1 of Theorem~\ref{evengenus}.
Let~$c_1$ and~$c_2$ be two curves that intersect at least twice. There must be at least one other curve~$c_3$ intersecting either~$c_1$ or~$c_2$,
and the intersection graph of the curves~$\{c_i\}$ contains the tree~$\Gamma$ with three vertices, one double edge and one simple edge as a subgraph,
depicted in Figure~\ref{9subgraph} on the left. As in the proof of Theorem~\ref{evengenus}, we obtain $\lambda(\phi)\ge\frac{7+3\sqrt{5}}{2}\approx6.854.$
Note that~$\mu_5\approx 6.452$ but~$\mu_3\approx 6.996$,
so the argument works for genus at least~$7$. In order to accommodate genus~$5$ in the argument, we need to consider also slightly larger subgraphs than the one used in Case~$1$ of the proof of Theorem~\ref{evengenus}: there must be at least one other edge, since the intersection graph must contain an odd cycle by Lemma~\ref{notbipartite}.
More precisely, the intersection graph actually contains one one the four graphs shown in Figure~\ref{double} as a subgraph.
\begin{figure}[h]
\begin{center}
\def\svgwidth{110pt}
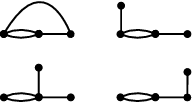
\caption{}
\label{double}
\end{center}
\end{figure}
All Penner mapping classes associated with one of those four graphs can directly be shown to have dilatation bounded from below by~$7$.
Hence, by monotonicity of the spectral radius of nonnegative matrices under~$``\le"$, we can exclude double edges of the intersection graph also for genus~$5$.

\emph{Case 2: the intersection graph contains an odd cycle of length~$k\le l$}: We can use the same argument as in the proof of Theorem~\ref{evengenus}, this time invoking the monotonicity lemmas we have
proved for enriched cycles. Since an odd cycle cannot fill a nonorientable surface of odd genus by Corollary~\ref{fillingcyclecor},
we may assume that the intersection graph contains an enriched cycle of length~$k\le l$
as an induced subgraph. In particular, the dilatation~$\lambda(\phi)$ is bounded from below by the dilatation of a pseudo-Anosov mapping class arising from Penner's construction using curves that intersect with the pattern of an enriched odd cycle of length~$k\le l$. In particular, Lemmas~\ref{panminimum} and~\ref{pandecreasing} directly imply~$\lambda(\phi)\ge\lambda(\psi_k)\ge\lambda(\psi_l)$.

\emph{Case 3: the intersection graph only contains odd cycles of length~$k>l$:}
Take an odd cycle of minimal length~$k>l$ among odd cycles.
Exactly as in Case~3 of the proof of Theorem~\ref{evengenus}, we may assume this cycle is in fact an induced subgraph of the intersection graph.
Hence, by Lemma~\ref{genusbound}, the genus of the surface is bounded from below by~$k+1>l+2=g$,
a contradiction.
\end{proof}

We can now complete the proof of Theorem~\ref{dilatationlimits}.

\begin{proof}[Proof of Theorem~\ref{dilatationlimits}]
All statements of Theorem~\ref{dilatationlimits} are implied by Theorem~\ref{dilatationlimitsalmost},
except for the existence of the limit~$\lim_{k\to\infty}\delta_P(N_{2k+1})$.
To show that this limit indeed exists, the only thing we have to note is that the sequence~$\delta_P(N_{2k+1})$ is not increasing in~$k$.
But this is a direct consequence of Lemma~\ref{pandecreasing}, since~$\delta_P(N_{2k+1})=\lambda(\psi_{2k-1})=\mu_{2k-1}$ by Theorem~\ref{oddgenus}.
\end{proof}

\end{document}